\documentclass[10pt]{article}

\usepackage{color} 
\usepackage{epsfig}
\usepackage{amsmath,amssymb,amsfonts,amsthm, esint}
\usepackage{moreverb,rotating,graphics}
\usepackage[T1]{fontenc}
\usepackage[normalem]{ulem}
\usepackage{verbatim}
\usepackage[utf8]{inputenc}

\usepackage{latexsym}
\usepackage{pdflscape}
\usepackage{examplep}
\usepackage{longtable}
\usepackage{graphicx}
\usepackage{amsmath}
\usepackage{booktabs}
\usepackage{memhfixc}
 \usepackage{dsfont}
\usepackage{psfrag}
\usepackage{picture}

\newcommand\dps{\displaystyle }

\newtheorem{theorem}{Theorem}[section]
\newtheorem{proposition}[theorem]{Proposition}
\newtheorem{remark}[theorem]{Remark}
\newtheorem{lemma}[theorem]{Lemma}

\newtheorem{definition}[theorem]{Definition}

\newcommand{\eps}{\varepsilon}

\def\div{{\rm div}}

\def\ZZ{{\mathbb Z}}
\def\RR{{\mathbb R}}
\def\cC{{\mathcal C}}
\def\N{{\mathbb N}}
\def\E{{\mathbb E}}
\def\Z{{\mathbb Z}}
\def\R{{\mathbb R}}

\def\P{{\mathbb P}}

\def\cC{{\cal C}}

\def\cM{{\cal M}}
\def\cF{{\cal F}}

\def\cI{{\cal I}}
\def\cS{{\cal S}}
\def\cA{{\mathbb A}}
\def\cD{{\cal D}}

\def\cJ{{\cal J}}

\def\wlim{\rightharpoonup}

\def\cD{\mathcal{D}}

\def\ccA{\mathcal{A}}

\hoffset=-1.5truecm
\voffset=-2truecm
\textwidth=15truecm
\textheight=24truecm

\bibliographystyle{plain}

\title{An embedded corrector problem for homogenization. \\ Part I: Theory}
\author{Eric Canc\`es$^{1,3}$, Virginie Ehrlacher$^{1,3}$, Fr\'ed\'eric Legoll$^{2,3}$, Benjamin Stamm$^4$, Shuyang Xiang$^1$
\\
{\footnotesize $^1$ CERMICS, \'Ecole des Ponts ParisTech, 77455 Marne-La-Vall\'ee Cedex 2, France}
\\
{\footnotesize $^2$ Laboratoire Navier, \'Ecole des Ponts ParisTech, 77455 Marne-La-Vall\'ee Cedex 2, France}
\\
{\footnotesize $^3$ Inria Paris, MATHERIALS project-team, 2 rue Simone Iff, CS 42112, 75589 Paris Cedex 12, France}
\\
{\footnotesize $^4$ MATHCCES, Department of Mathematics, RWTH Aachen University, Schinkelstrasse 2, D-52062 Aachen, Germany}
}
\date{\today}

\begin{document}


\maketitle

\begin{abstract}
This article is the first part of a two-fold study, the objective of which is the theoretical analysis and numerical investigation of new approximate corrector problems in the context of stochastic homogenization. We present here three new alternatives for the approximation of the homogenized matrix for diffusion problems with highly-oscillatory coefficients. These different approximations all rely on the use of an {\em embedded} corrector problem (that we previously introduced in~\cite{notre_cras}), where a finite-size domain made of the highly oscillatory material is embedded in a homogeneous infinite medium whose diffusion coefficients have to be appropriately determined. The motivation for considering such embedded corrector problems is made clear in the companion article~\cite{refpartii}, where a very efficient algorithm is presented for the resolution of such problems for particular heterogeneous materials. In the present article, we prove that the three different approximations we introduce converge to the homogenized matrix of the medium when the size of the embedded domain goes to infinity.
\end{abstract}

\section{Introduction}

Let $D \subset \R^d$ be a smooth bounded domain of $\R^d$ (with $d \in \N^\star$), $f \in L^2(D)$ and $(\cA_\varepsilon)_{\varepsilon > 0}$ be a family of uniformly bounded and coercive diffusion matrix fields such that $\cA_\varepsilon$ varies on the characteristic length-scale $\varepsilon > 0$. We consider the family of elliptic problems
\begin{equation}\label{eq:diveps}
u_\eps \in H^1_0(D), \quad -\hbox{\rm div}\left[ \cA_\varepsilon \, \nabla u_\varepsilon \right]=f \ \text{in $D$}.
\end{equation}
When $\varepsilon$ is much smaller than the characteristic size of the domain $D$, problem~\eqref{eq:diveps} is challenging to address from a numerical perspective. In order to obtain a sufficient accuracy, any discretization method indeed needs to resolve the oscillations of $\cA_\varepsilon$, which leads to a discrete problem with a prohibitively large number of degrees of freedom. 

\medskip

It is well-known (see e.g.~\cite{Bensoussan,Cioranescu,Jikov}) that, if $\cA_\varepsilon$ is bounded and bounded away from zero uniformly in $\varepsilon$, problem~\eqref{eq:diveps} admits a homogenized limit. Up to the extraction of a subsequence, that we denote $\eps'$, there exists a homogenized matrix-valued field $\cA^\star \in (L^\infty(D))^{d \times d}$ such that, for any $f \in L^2(D)$, the solution $u_{\eps'}$ to~\eqref{eq:diveps} converges, weakly in $H^1_0(D)$, to $u^\star$, the unique solution to the homogenized equation
\begin{equation}\label{eq:div0}
u^\star \in H^1_0(D), \quad -\mbox{\rm div}\left[ \cA^\star \nabla u^\star \right] = f \mbox{ in $D$}.
\end{equation}
Note that the homogenized matrix, and hence the function $u^\star$, depends in general on the extracted subsequence. 

This setting includes in particular the periodic case, where $\cA_\varepsilon(x)=\cA_{\rm per}(x/\varepsilon)$ for a fixed $\ZZ^d$-periodic function $\cA_{\rm per}$, the quasi-periodic case, where $\cA_\varepsilon(x)=\cA_{\rm q-per}(x/\varepsilon)$ for a fixed quasi-periodic function $\cA_{\rm q-per}$, and the stationary random case (see~\cite{kozlov,papa}), where
$$
\cA_\eps(x)=\cA_{\rm sta}(x/\varepsilon,\omega) \ \text{for some realization $\omega$ of a stationary random function $\cA_{\rm sta}$.}
$$
In these three cases, the convergence of $u_\eps$ to $u^\star$ holds for the whole sequence (and not only up to a subsequence extraction), and the homogenized matrix field $\cA^\star$ is actually equal to a constant and deterministic matrix in the whole domain $D$. Once this homogenized matrix has been determined, problem~\eqref{eq:div0} can be solved by standard numerical techniques with a much lower computational cost than the original problem~\eqref{eq:diveps}.

\medskip

The computation of the homogenized matrix is often a challenging task. In the quasi-periodic case and in the random stationary case, corrector problems posed over the whole space $\R^d$ have to be solved. In practice, approximate corrector problems defined on truncated domains with appropriate boundary conditions (typically periodic boundary conditions) are considered to obtain approximate homogenized diffusion matrices. The larger the size of the truncated domain, the more accurate the corresponding approximation of the homogenized matrix. The use of standard finite element discretizations to tackle these corrector problems may lead to very large discretized problems, whose computational costs can be prohibitive. 

\medskip

In this article, we propose some alternative methods to approximate the homogenized matrix. These are based on the use of an {\em embedded corrector problem} that is again defined over the whole space $\R^d$. In this new problem (see~\eqref{eq:pbbase} below), the diffusion coefficient is equal to $\cA_\eps$ in a bounded domain of typical size $R$, and to a {\em constant} matrix $A_R$ outside this bounded domain, the value of which has to be properly chosen. Our motivation for considering such a family of corrector problems is the following. Recently, a very efficient numerical method has been proposed and developed in the series of works~\cite{Stamm1,Stamm2} in order to solve Poisson problems arising in implicit solvation models. The adaptation of this algorithm, which is based on a boundary integral formulation of the problem, has enabled us to solve these embedded corrector problems in a very efficient way in situations when the considered heterogeneous medium is composed of (possibly polydisperse) spherical inclusions embedded into a homogeneous material (see Fig.~\ref{fig:boules} below). This algorithm will be presented in details in the companion article~\cite{refpartii}. 

\medskip

The choice of the value of the exterior constant diffusion coefficient $A_R$ is instrumental to obtain approximate effective matrices which converge to the exact homogenized matrix when $R$ goes to infinity. In this article, we propose three different approaches to choose the value of the constant exterior diffusion matrix $A_R$ and to define effective matrices from the embedded corrector problem. We prove the convergence of these three approximations to the actual homogenized matrix $\cA^\star$ as $R$ goes to infinity. We also show that a naive choice of $A_R$ leads to an approximate homogenized matrix which {\em does not converge} to the exact homogenized matrix when $R$ goes to infinity.

\medskip

This article is organized as follows. In Section~\ref{sec:existing}, we recall some basic elements on the theory of stochastic homogenization, and review the standard associated numerical methods. The embedded corrector problem mentioned above and the three different approaches we propose to compute effective matrices are presented in Section~\ref{sec:new_defs}. The proofs of consistency of the proposed approximations are collected in Section~\ref{sec:proofs}. Two particular situations, the case of a homogeneous material and the one-dimensional case, for which analytical computations can be performed, are briefly discussed in Section~\ref{sec:justif}. 

The present work complements the earlier publication~\cite{notre_cras}, where we briefly presented our approaches. We provide here a complete and detailed analysis of them. We refer to~\cite{refpartii} for a detailed presentation of the algorithmic aspects along with some numerical illustrations.

\section{Stochastic homogenization: a prototypical example}
\label{sec:existing}

In the sequel, the following notation is used. Let $d\in \N^\star$, $0 < \alpha \leq \beta < +\infty$ and
$$
\cM:= \left\{ A\in \R^{d\times d}, \; A^T = A \ \text{and, for any $\xi \in \RR^d$}, \ \alpha |\xi|^2 \leq \xi^T A \xi \leq \beta |\xi|^2 \right\}.
$$
Let $(e_i)_{1\leq i \leq d}$ be the canonical basis of $\R^d$. Taking $\xi = e_i$ and next $\xi = e_i+e_j$ in the above definition, we see that any $A:=\left( A_{ij} \right)_{1\leq i,j \leq d} \in \cM$ satisfies $|A_{ij}| \leq \beta$ for any $1 \leq i,j \leq d$. We further denote by $\cD(\R^d)$ the set of $C^\infty$ functions with compact supports in $\R^d$.

\medskip

In this section, we briefly recall the well-known homogenization theory in the stationary ergodic setting, as well as standard strategies to approximate the homogenized coefficients. We refer to~\cite{kozlov,papa} for some seminal contributions, to~\cite{engquist-souganidis} for a general, numerically oriented presentation, and to~\cite{Bensoussan,Cioranescu,Jikov} for classical textbooks. We also refer to the review article~\cite{singapour} (and the extensive bibliography contained therein) for a presentation of our particular setting. The stationary ergodic setting can be viewed as a prototypical example of contexts in which the alternative method we propose here for approximating the homogenized matrix can be used.

\subsection{Theoretical setting}

Let $(\Omega, \cF, \P)$ be a probability space and $\dps Q := \left( -\frac{1}{2},\frac{1}{2} \right)^d$. For a random variable $X\in L^1(\Omega, d\P)$, we denote by $\dps \E[X]:= \int_\Omega X(\omega)\, d\P(\omega)$ its expectation value. For the sake of convenience, we restrict the presentation to the case of discrete stationarity, even though the ideas presented here can be readily extended to the case of continuous stationarity. We assume that the group $(\Z^d, +)$ acts on~$\Omega$. We denote by $(\tau_k)_{k\in\Z^d}$ this action, and assume that it preserves the measure $\P$, i.e.
$$
\forall k\in \Z^d, \quad \forall F\in \cF, \quad \P(\tau_k(F)) = \P(F).
$$
We also assume that $\tau$ is ergodic, that is, 
$$
\forall F\in \cF, \quad \left( \forall k\in \Z^d, \; \tau_k F = F \right) \implies \left( \P(F) = 0 \mbox{ or } 1 \right).
$$
A funtion $\cS\in L^1_{\rm loc}\left( \R^d, L^1(\Omega)\right)$ is said to be stationary if
\begin{equation}
\label{eq:stationary}
\forall k\in \Z^d, \quad \cS(x + k, \omega) = \cS(x, \tau_k \omega) \mbox{ for almost all $x\in \R^d$ and almost surely}.
\end{equation}
In that context, the Birkhoff ergodic theorem~\cite{Birkhoff1,Birkhoff2,Birkhoff3} can be stated as follows:
\begin{theorem}
\label{th:Birkhoff}
Let $\cS \in L^\infty\left( \R^d, L^1(\Omega)\right)$ be a stationary function in the sense of~\eqref{eq:stationary}. For $k = (k_1,k_2, \dots, k_d) \in \ZZ^d$, we set $\dps |k|_\infty = \sup_{1\leq i \leq d} |k_i|$. Then, 
$$
\frac{1}{(2N+1)^d}\sum_{|k|_\infty \leq N} \cS(y, \tau_k \omega) \mathop{\longrightarrow}_{N\to +\infty} \E\left[ \cS(y, \cdot) \right] \mbox{ in $L^\infty(\R^d)$, almost surely}.
$$
This implies that
$$
\cS\left( \frac{x}{\varepsilon}, \omega\right) \mathop{\wlim}_{\varepsilon \to 0}^* \E\left[ \frac{1}{|Q|}\int_Q \cS(y, \cdot)\,dy \right] \mbox{ in $L^\infty(\R^d)$, almost surely}.
$$
\end{theorem}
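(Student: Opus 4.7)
The plan is to prove the two conclusions in sequence: first the almost-sure $L^\infty(\R^d)$-convergence of the discrete ergodic averages, and then the weak-$\star$ convergence of $\cS(\cdot/\eps,\omega)$ as $\eps\to 0$, which will follow by a tiling-and-Riemann-sum argument based on the first conclusion.

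For the first conclusion, I would fix $y \in \R^d$ so that $\omega \mapsto \cS(y,\omega)$ lies in $L^1(\Omega)$ (which holds for a.e.\ $y$ by hypothesis). The classical discrete multidimensional Birkhoff ergodic theorem, applied to this $L^1(\Omega)$ random variable under the ergodic measure-preserving $\Z^d$-action $(\tau_k)$, then gives pointwise-in-$\omega$ convergence
\[
\frac{1}{(2N+1)^d}\sum_{|k|_\infty\leq N}\cS(y,\tau_k\omega)\longrightarrow\E[\cS(y,\cdot)]
\]
almost surely, for this fixed $y$. A Fubini argument on $Q\times\Omega$ then produces a single conull $\omega$-set on which the above convergence holds for a.e.\ $y\in Q$, and the $\Z^d$-periodicity of the limit $y\mapsto\E[\cS(y,\cdot)]$ (inherited from stationarity~\eqref{eq:stationary} together with the measure-preserving property of the $\tau_k$) extends this to a.e.\ $y\in\R^d$.

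The main obstacle is to upgrade this a.e.-in-$y$ convergence to the claimed $L^\infty(\R^d)$-convergence. My strategy is a density argument: approximate $\cS$ in the norm of $L^\infty(\R^d;L^1(\Omega))$ by finite sums $\cS_n(y,\omega) = \sum_{i=1}^{M_n}\mathbf{1}_{A_i^n}(y)X_i^n(\omega)$, for which the ergodic average converges uniformly in $y$ to $\sum_i\mathbf{1}_{A_i^n}(y)\E[X_i^n]$ (only finitely many scalar Birkhoff limits are involved). The residual $\cS - \cS_n$ is then controlled uniformly in $y$ and $N$ via a maximal-type ergodic inequality combined with the uniform $L^1(\Omega)$-bound $\sup_y\|(\cS - \cS_n)(y,\cdot)\|_{L^1(\Omega)} \to 0$ as $n\to\infty$.

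For the second (weak-$\star$) conclusion, it suffices to test against $\phi\in\cD(\R^d)$, with the extension to $\phi\in L^1(\R^d)$ following by a standard density argument. Tiling $\R^d$ by the cubes $\eps(k+Q)$ and substituting $x = \eps(y+k)$ with $y\in Q$, stationarity gives
\[
\int_{\R^d}\cS(x/\eps,\omega)\,\phi(x)\,dx = \eps^d\sum_{k\in\Z^d}\int_Q\cS(y,\tau_k\omega)\,\phi(\eps(y+k))\,dy.
\]
Replacing $\phi(\eps(y+k))$ by $\phi(\eps k)$ on each small cube by Lipschitz regularity produces an $O(\eps)$ error (since $\phi$ has compact support), leaving $\eps^d\sum_{k\in\Z^d}\phi(\eps k)\,Y_0(\tau_k\omega)$ with $Y_0(\omega) := \int_Q\cS(y,\omega)\,dy \in L^1(\Omega)$. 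This is a Riemann-sum-weighted ergodic average which, by the first conclusion applied to $Y_0$ (and an approximation of $\phi$ by finite linear combinations of cube indicators), converges a.s.\ to $\E[Y_0]\int_{\R^d}\phi(x)\,dx$, exactly the claimed weak-$\star$ limit.
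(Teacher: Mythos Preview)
The paper does not supply a proof of this theorem: it is stated as the classical Birkhoff ergodic theorem, with a citation to the original references, and is used as a black box in the homogenization background of Section~2. There is therefore no ``paper's own proof'' to compare your attempt against.

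That said, your outline has a genuine gap in the upgrade to $L^\infty(\R^d)$-convergence. The density step on which you rely --- approximating $\cS$ in the norm of $L^\infty(\R^d;L^1(\Omega))$ by finite sums $\sum_i \mathbf{1}_{A_i}(y)\,X_i(\omega)$ --- is not available in general: simple functions are \emph{not} dense in $L^\infty$ of a Banach-valued variable, for exactly the same reason step functions are not dense in scalar $L^\infty$. Concretely, if the (essentially separable) range of $y\mapsto \cS(y,\cdot)$ in $L^1(\Omega)$ contains infinitely many points at pairwise distance $\ge\delta>0$, no function taking finitely many values can approximate it uniformly in $y$ to within $\delta/2$. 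The maximal ergodic inequality you invoke does not rescue this: it controls $\big\|\sup_N |A_N f|\big\|_{L^1(\Omega)}$ for a \emph{fixed} $f\in L^1(\Omega)$, not a supremum over $y$ of pointwise-in-$\omega$ quantities, so it cannot convert an $L^\infty_y L^1_\omega$ smallness of $\cS-\cS_n$ into the uniform-in-$y$ almost-sure control you need.

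Your argument for the second (weak-$\star$) assertion is the standard one and is fine once one has an appropriate form of the first assertion; in fact, for that step you only use the scalar Birkhoff theorem applied to $Y_0(\omega)=\int_Q \cS(y,\omega)\,dy$, which does not require the $L^\infty(\R^d)$ statement at all.
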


Note that here $|Q|=1$. We kept nevertheless the normalizing factor $|Q|^{-1}$ in the above formula to emphasize that the convergence holds toward the expectation of the mean value over the unit cell of the underlying lattice (here $\Z^d$).

We also recall the definition of $G$-convergence introduced by F.~Murat and L.~Tartar in~\cite{MuratTartar}:

\begin{definition}[$G$-convergence]\label{def:Gconv}
Let $D$ be a smooth bounded domain of $\R^d$. A sequence of matrix-valued functions $\left( \cA^R \right)_{R>0} \subset L^\infty(D, \cM)$ is said to converge in the sense of homogenization (or to $G$-converge) in $D$ to a matrix-valued function $\cA^\star\in L^\infty(D, \cM)$ if, for all $f\in H^{-1}(D)$, the sequence $(u^R)_{R>0}$ of solutions to 
$$
u^R\in H^1_0(D), \quad -\mbox{\div}\left( \cA^R \nabla u^R \right) = f \mbox{ in $\cD'(D)$}
$$
satisfies
$$
\left\{
\begin{array}{l}
  \dps u^R \mathop{\wlim}_{R\to +\infty} u^\star \mbox{ weakly in $H^1_0(D)$},
  \\ \noalign{\vskip 3pt}
\dps \cA^R \nabla u^R \mathop{\wlim}_{R\to +\infty} \cA^\star \nabla u^\star \mbox{ weakly in $L^2(D)$},
\end{array}
\right .
$$
where $u^\star$ is the unique solution to the homogenized equation
$$
u^\star \in H^1_0(D), \quad -\mbox{\rm div}\left( \cA^\star \nabla u^\star \right) = f \mbox{ in $\cD'(D)$}.
$$
\end{definition}

\medskip

The following theorem is a classical result of stochastic homogenization theory (see e.g.~\cite{Jikov}):

\begin{theorem}
\label{th:randhomog}
Let $\cA \in L^\infty(\R^d, L^1(\Omega))$ be such that $\cA(x, \omega) \in \cM$ almost surely and for almost all $x\in \R^d$. We assume that $\cA$ is stationary in the sense of~\eqref{eq:stationary}. For any $R>0$ and $\omega \in \Omega$, we set $\cA^R(\cdot, \omega):= \cA(R\cdot, \omega)$. Then, almost surely, for any arbitrary smooth bounded domain $D\subset \R^d$, the sequence $\left(\cA^R(\cdot, \omega)\right)_{R>0}\subset L^\infty(D; \cM)$ $G$-converges to a \emph{constant and deterministic} matrix $A^\star\in\cM$, which is given by
$$
\forall p \in \R^d, \quad
A^\star p = \E \left[\frac{1}{|Q|} \int_Q \cA(x, \cdot) \left( p + \nabla w_p(x, \cdot) \right) \,dx \right],
$$
where $w_p$ is the unique solution (up to an additive constant) in
$$
\Big\{ v \in L^2_{\rm loc}(\R^d, L^2(\Omega)), \quad \nabla v \in \left( L^2_{\rm unif}(\R^d, L^2(\Omega)) \right)^d \Big\}
$$
to the so-called corrector problem
\begin{equation}
\label{eq:correctorrandom}
\left\{
\begin{array}{l}
  -\mbox{\div}\left( \cA(\cdot, \omega)(p + \nabla w_p(\cdot, \omega))\right) = 0 \mbox{ almost surely in $\cD'(\R^d)$},
  \\ \noalign{\vskip 3pt}
  \nabla w_p \mbox{ is stationary in the sense of~\eqref{eq:stationary}},
  \\ \noalign{\vskip 3pt}
\dps \E\left[ \int_Q \nabla w_p(x, \cdot)\,dx \right] = 0.
\end{array}
\right.
\end{equation}
\end{theorem}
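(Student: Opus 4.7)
The plan is to combine three classical ingredients: a Lax–Milgram argument to construct the corrector $w_p$, the Birkhoff ergodic theorem (Theorem~\ref{th:Birkhoff}) applied to the stationary flux $\cA(\cdot,\omega)(p+\nabla w_p(\cdot,\omega))$, and the Murat–Tartar oscillating test function method to identify the $G$-limit.

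First I would set up the functional framework for the corrector. Let $V$ denote the closed subspace of $\left(L^2(\Omega)\right)^d$ consisting of random vector fields $\Phi$ which are, in the sense of the discrete group action $(\tau_k)_{k\in\Z^d}$, gradients of stationary potentials and which satisfy $\E[\Phi]=0$. The bilinear form $a(\Phi,\Psi):=\E\!\left[\Phi^T\cA(0,\cdot)\Psi\right]$ is continuous and coercive on $V$, thanks to the bounds $\alpha|\xi|^2\le\xi^T\cA(0,\cdot)\xi\le\beta|\xi|^2$ almost surely, so Lax–Milgram yields a unique $\Phi_p\in V$ with $\E\!\left[(p+\Phi_p)^T\cA(0,\cdot)\Psi\right]=0$ for all $\Psi\in V$. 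One then reconstructs a function $w_p$ in the space appearing in the statement, unique up to an additive constant, such that $\nabla w_p(\cdot,\omega)$ realises $\Phi_p$ in a stationary fashion and $w_p$ solves~\eqref{eq:correctorrandom} almost surely in $\cD'(\R^d)$.

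Next, both $p+\nabla w_p(\cdot,\omega)$ and the stationary flux $j_p(x,\omega):=\cA(x,\omega)(p+\nabla w_p(x,\omega))$ lie in $L^\infty(\R^d,L^2(\Omega))$ by coercivity and boundedness of $\cA$. Applying Theorem~\ref{th:Birkhoff} componentwise, almost surely
$$
p+\nabla w_p(R\cdot,\omega)\;\mathop{\wlim}_{R\to\infty}^{*}\;p
\quad\text{and}\quad
j_p(R\cdot,\omega)\;\mathop{\wlim}_{R\to\infty}^{*}\;A^\star p
\quad\text{in } L^\infty(\R^d),
$$
with $A^\star p=\E\!\left[|Q|^{-1}\int_Q\cA(x,\cdot)(p+\nabla w_p(x,\cdot))\,dx\right]$. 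Introducing the rescaled corrector $\varphi_p^R(x,\omega):=p\cdot x+R^{-1}w_p(Rx,\omega)$, we have $\nabla\varphi_p^R=p+\nabla w_p(R\cdot,\omega)$, $\cA^R\nabla\varphi_p^R=j_p(R\cdot,\omega)$, and $-\mbox{\div}\!\left(\cA^R\nabla\varphi_p^R\right)=0$ almost surely in $\cD'(\R^d)$.

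Finally, to prove $G$-convergence on an arbitrary smooth bounded domain $D$, I would run the oscillating test function argument. Given $f\in H^{-1}(D)$ and $u^R\in H^1_0(D)$ solving $-\mbox{\div}(\cA^R\nabla u^R)=f$, uniform coercivity gives a subsequence along which $u^R\wlim u^\star$ in $H^1_0(D)$ and $\cA^R\nabla u^R\wlim\sigma$ in $L^2(D)$. For $\psi\in\cD(D)$ and $p\in\R^d$, using the symmetry of $\cA^R$ and testing the equations for $u^R$ and $\varphi_p^R$ against $\psi\,\varphi_p^R$ and $\psi\,u^R$ respectively leads to
$$
\int_D\psi\,(\cA^R\nabla u^R)\cdot\nabla\varphi_p^R\;-\;\int_D\psi\,(\cA^R\nabla\varphi_p^R)\cdot\nabla u^R\;\longrightarrow\;0.
$$
Applying the div-curl lemma (one factor has controlled divergence, the other controlled curl) and inserting the weak-$*$ limits of $\nabla\varphi_p^R$ and $\cA^R\nabla\varphi_p^R$ yields $\int_D\psi\,\sigma\cdot p=\int_D\psi\,(A^\star p)\cdot\nabla u^\star$ for every $p$ and $\psi$, hence $\sigma=A^\star\nabla u^\star$. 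The limit being independent of the extracted subsequence, the whole family $G$-converges to the constant deterministic matrix $A^\star$.

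The main obstacle I anticipate is the first step: the corrector $w_p$ itself is not stationary (only $\nabla w_p$ is), and the functional space in the statement combines a local $L^2$ condition on $w_p$ with a uniform $L^2$ condition on $\nabla w_p$. Passing from the abstract Lax–Milgram solution $\Phi_p$ on $V$ to a genuine function $w_p\in L^2_{\rm loc}(\R^d,L^2(\Omega))$ with stationary gradient, zero mean and the correct uniform integrability requires a careful reconstruction (extending $\Phi_p$ by the $\tau_k$-action and integrating along paths, then checking compatibility). Once this is secured, the ensuing combination of Birkhoff's theorem with the div-curl lemma is essentially routine.
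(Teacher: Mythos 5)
The paper itself offers no proof of Theorem~\ref{th:randhomog}: it is stated as a classical result with a pointer to Jikov--Kozlov--Oleinik, so your sketch has to be measured against the standard literature rather than against anything in this article. On that score your outline is the right one and follows the established route: construct the corrector by Lax--Milgram in the Hilbert space of mean-zero stationary potential fields, use the Birkhoff ergodic theorem to obtain the almost-sure weak limits of the rescaled gradient $p+\nabla w_p(R\cdot,\omega)$ and of the flux $\cA(R\cdot,\omega)(p+\nabla w_p(R\cdot,\omega))$, and identify the $G$-limit by compensated compactness. You have also correctly isolated the genuinely delicate step, namely passing from the abstract $\Phi_p\in L^2_{\rm pot}(\Omega)$ to a concrete function $w_p\in L^2_{\rm loc}(\R^d,L^2(\Omega))$ with stationary gradient.

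Two points deserve repair before this becomes a proof. First, you invoke Theorem~\ref{th:Birkhoff} to get weak-$*$ convergence in $L^\infty(\R^d)$ of $p+\nabla w_p(R\cdot,\omega)$ and of $j_p(R\cdot,\omega)$; but that statement of Birkhoff's theorem assumes $\cS\in L^\infty(\R^d,L^1(\Omega))$, while $\nabla w_p$ and $j_p$ are only in $L^2_{\rm unif}(\R^d,L^2(\Omega))$ and are not uniformly bounded in $x$. What one actually extracts (from the $L^1$ Birkhoff theorem applied to $|\nabla w_p|^2$ and to the components of $\nabla w_p$ and $j_p$) is almost-sure weak convergence in $L^2_{\rm loc}(\R^d)$, which is precisely what the compactness argument needs; stating it as $L^\infty$ weak-$*$ is not justified. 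Second, if you carry out the identification by testing against $\psi\,\varphi_p^R$ and $\psi\,u^R$, as written, you additionally need the strong $L^2_{\rm loc}$ convergence $R^{-1}w_p(R\cdot,\omega)\to 0$ (sublinearity of the corrector) to pass to the limit in the lower-order terms and in $\langle f,\psi\varphi_p^R\rangle$; this is a separate ergodic input and not a by-product of the reconstruction you flag. Applying the div--curl lemma directly to the two pairs $\left(\cA^R\nabla u^R,\nabla\varphi_p^R\right)$ and $\left(\cA^R\nabla\varphi_p^R,\nabla u^R\right)$ and then using the pointwise symmetry of $\cA^R$ to equate the two limits $\sigma\cdot p$ and $(A^\star p)\cdot\nabla u^\star$ sidesteps sublinearity entirely, and is the cleaner way to close the argument.
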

In Theorem~\ref{th:randhomog}, the notation $L^2_{\rm unif}$ refers to the uniform $L^2$ space:
$$
L^2_{\rm unif}(\R^d, L^2(\Omega)) :=\left\{ u \in L^2_{\rm loc}(\R^d;L^2(\Omega)), \quad \sup_{x \in \R^d} \int_{x+(0,1)^d} \|u(y,\cdot)\|_{L^2(\Omega)}^2 \, dy < \infty \right\}.
$$

\medskip

The major difficulty to compute the homogenized matrix $A^\star$ is the fact that the corrector problem~\eqref{eq:correctorrandom} is set over the whole space $\R^d$ and cannot be reduced to a problem posed over a bounded domain (in contrast e.g. to periodic homogenization). This is the reason why approximation strategies yielding practical approximations of $A^\star$ are necessary.

\subsection{Standard numerical practice}

A common approach to approximate $A^\star$ consists in introducing a truncated version of~\eqref{eq:correctorrandom}, see e.g.~\cite{Bourgeat}. For any $R>0$, let us denote $\dps Q_R := \left(-\frac{R}{2},\frac{R}{2} \right)^d$ and
$$
H^1_{\rm per}(Q_R):=\left\{ w \in H^1_{\rm loc}(\R^d), \quad \mbox{$w$ is $R \, \Z^d$-periodic} \right\}.
$$
Observing that $Q_1 = Q$, we also introduce
$$
H^1_{\rm per}(Q):=\left\{ w \in H^1_{\rm loc}(\R^d), \quad \mbox{$w$ is $\Z^d$-periodic} \right\}.
$$
For any $p\in \R^d$, let $\widetilde{w}_p^R(\cdot, \omega)$ be the unique solution in $H^1_{\rm per}(Q_R)/\R$ to
\begin{equation}
\label{eq:correctorrandom-N}
-\mbox{\div}\left( \cA(\cdot, \omega) \left(p + \nabla \widetilde{w}_p^R(\cdot, \omega) \right) \right) = 0 \mbox{ almost surely in $\cD'(\R^d)$}.
\end{equation}
It satisfies the variational formulation 
$$ 
\forall v \in H^1_{\rm per}(Q_R), \quad \int_{Q_R} (\nabla v)^T \cA(\cdot, \omega) \left( p + \nabla \widetilde{w}_p^R(\cdot, \omega) \right) = 0.
$$
The corresponding approximate (or apparent) homogenized matrix $A^{\star,R}(\omega) \in \cM$ is defined by
$$
\forall p \in \R^d, \quad A^{\star,R}(\omega) \, p := \frac{1}{|Q_R|} \int_{Q_R} \cA(\cdot, \omega) \left( p + \nabla \widetilde{w}_p^R(\cdot, \omega) \right).
$$
The matrix $A^{\star,R}(\omega)$ is constant and random. A.~Bourgeat and A.~Piatniski proved in~\cite{Bourgeat} that the sequence of matrices $\left(A^{\star,R}(\omega)\right)_{R>0}$ converges almost surely to $A^\star$ as $R$ goes to infinity. Recent mathematical studies (initiated in~\cite{GloriaOtto1}) by A.~Gloria, F.~Otto and their collaborators, have examined in details the speed of convergence (along with related questions) of $A^{\star,R}(\omega)$ to $A^\star$ (see also~\cite[Theorem 1.3 and Proposition 1.4]{nolen}). Variance reduction techniques have also been introduced to improve the approximation of $A^\star$, see e.g.~\cite{legoll_jcp} for a review.

\begin{remark}
In~\cite{Bourgeat}, A.~Bourgeat and A.~Piatniski also analyzed a truncated corrector problem supplied with homogeneous Dirichlet boundary conditions (in contrast to~\eqref{eq:correctorrandom-N}, where periodic boundary conditions are used) and proved similar convergence results. Likewise, in~\cite{Huet}, C.~Huet introduced a corrector problem supplied with Neumann boundary conditions. 
\end{remark}

\begin{remark}
Besides approximations based on~\eqref{eq:correctorrandom-N}, other techniques have been introduced to approximate $A^\star$. We refer to~\cite{cottereau,lemaire} for optimization-based techniques, to~\cite{mourrat} for an approach based on the heat equation associated to~\eqref{eq:diveps}, and to~\cite{filtrage_sab,filtrage_blanc} for approaches based on filtering. We also mention~\cite{brisard_autres_CL}, where a problem posed on $\R^d$ (which is different from our embedded problem~\eqref{eq:pbbase}) is considered. In a slighly different context, and with a different objective than ours here, the work~\cite{lu_otto} studies the question of optimal artificial boundary condition for random elliptic media.
\end{remark}

\medskip

The proof in~\cite{Bourgeat} relies on the following scaling argument. For any $R>0$, let $\cA^R(\cdot, \omega) := \cA(R\cdot, \omega)$ and $\dps w_p^R(\cdot, \omega) := \frac{1}{R}\widetilde{w}_p^R(R\cdot, \omega)$. Rescaling problem~\eqref{eq:correctorrandom-N}, we obtain that, for any $p\in \R^d$, $w_p^R(\cdot, \omega)$ is the unique solution in $H^1_{\rm per}(Q)/\R$ to
\begin{equation}
\label{eq:correctorrandom-N2}
-\mbox{\div}\left( \cA^R(\cdot, \omega) \left( p + \nabla w_p^R(\cdot, \omega) \right) \right) = 0 \mbox{ almost surely in $\cD'(\R^d)$},
\end{equation}
and that
\begin{equation}
  \label{eq:maison}
A^{\star,R}(\omega) \, p = \frac{1}{|Q|} \int_Q \cA^R(\cdot, \omega) \left( p + \nabla w_p^R(\cdot, \omega) \right). 
\end{equation}
Choosing $w_p^R(\cdot, \omega)$ as the solution to~\eqref{eq:correctorrandom-N2} of zero average, it is easy to see that $\left( w_p^R(\cdot, \omega) \right)_{R>0}$ is bounded in $H^1_{\rm per}(Q)$. In addition, we know that the sequence $\left( \cA^R(\cdot, \omega)\right)_{R>0}$, which belongs to $L^\infty(Q, \cM)$, $G$-converges almost surely to $A^\star$ in $Q$. Using~\cite[Theorem~5.2 page~151]{Jikov} (which is recalled below as Theorem~\ref{th:th1}), we are in position to pass to the limit $R\to +\infty$ in~\eqref{eq:maison} and obtain the desired convergence result. 

\medskip

At this point, we make the following remark. If $\left( \cA^R\right)_{R>0} \subset L^\infty(Q; \cM)$ is a general family of matrix-valued fields which $G$-converges to a constant matrix $A^\star$ as $R$ goes to infinity, one can define for all $R>0$ effective approximate matrices $A^{\star, R}$ as follows. Consider, for any $p\in \R^d$, the unique solution $w_p^R$ in $H^1_{\rm per}(Q)/\R$ to
\begin{equation}
\label{eq:correctorrandom-N3}
-\mbox{\div}\left( \cA^R(p + \nabla w_p^R)\right) = 0 \mbox{ almost surely in $\cD'(\R^d)$},
\end{equation}
and define the matrix $A^{\star, R}$ by
\begin{equation}\label{eq:defper}
\forall p \in \R^d, \quad A^{\star,R} \, p = \frac{1}{|Q|} \int_Q \cA^R \left( p + \nabla w_p^R \right).
\end{equation}
Then, using the same arguments as in the above stationary ergodic case, it can be proven that $\dps A^{\star,R} \mathop{\longrightarrow}_{R\to +\infty} A^\star$. 

\medskip

Solving~\eqref{eq:correctorrandom-N3} by means of standard finite element methods requires the use of very fine discretization meshes, which may lead to prohibitive computational costs. This motivates our work and the alternative definitions of effective matrices that we propose in the next section.

\section{Three alternative definitions of effective matrices}\label{sec:new_defs}

Let $B = B(0,1)$ be the unit open ball of $\R^d$, $\Gamma = \partial B$ and $n(x)$ be the outward pointing unit normal vector at point $x \in \Gamma$. For any measurable subset $E$ of $\R^d$, we denote by $\chi_E$ the characteristic function of $E$.

The embedded corrector problem we define below (see~\eqref{eq:pbbase}) depends on $B$. We note that all the results presented in this article do not use the fact that $B$ is a ball. They can thus be easily extended to the case when $B$ is a general smooth bounded domain of $\R^d$. 

\subsection{Embedded corrector problem}\label{sec:embedded}

In this section, we introduce an {\em embedded corrector} problem, which we will use in the sequel to define new approximations of the homogenized coefficient $A^\star$. 

\medskip

We introduce the vector spaces
\begin{equation}
\label{eq:def_V0}
V:=\left\{v\in L^2_{\rm loc}(\R^d), \ \nabla v \in \left(L^2(\R^d)\right)^d\right\} \quad \mbox{and} \quad V_0:= \left\{ v \in V, \ \int_B v = 0\right\}.  
\end{equation}
The space $V_0$, endowed with the scalar product $\langle \cdot, \cdot \rangle$ defined by
$$
\forall v,w\in V_0, \quad \langle v,w\rangle:= \int_{\R^d} \nabla v \cdot \nabla w,
$$
is a Hilbert space. 

\medskip

For any matrix-valued field $\cA \in L^\infty(B,\cM)$, any constant matrix $A\in \cM$, and any vector $p\in \R^d$, we denote by $w^{\cA,A}_p$ the unique solution in $V_0$ to 
\begin{equation}
\label{eq:pbbase}
-\mbox{\div}\Big( \ccA^{\cA,A} \left( p + \nabla w^{\cA, A}_p \right) \Big) = 0 \mbox{ in $\cD'(\R^d)$},
\end{equation}
where (see Figure~\ref{fig:boules})
$$
\ccA^{\cA,A}(x) := \left| 
\begin{array}{l}
\cA(x) \mbox{ if } x\in B,\\
A \mbox{ if } x\in \R^d \setminus B.
\end{array}\right .
$$
The variational formulation of~\eqref{eq:pbbase} reads as follows: find $w_p^{\cA, A} \in V_0$ such that
\begin{equation}\label{eq:FV}
\forall v\in V_0, \quad \int_B (\nabla v)^T \cA (p + \nabla w_p^{\cA, A}) + \int_{\R^d\setminus B} (\nabla v)^T A \nabla w_p^{\cA, A} - \int_\Gamma (Ap\cdot n) \, v = 0.
\end{equation}
Problem~\eqref{eq:pbbase} is linear and the above bilinear form is coercive in $V_0$. This problem is thus equivalent to a minimization problem (recall that $\cA$ and $A$ are symmetric). The solution $w_p^{\cA, A}$ to~\eqref{eq:pbbase} is equivalently the unique solution to the minimization problem
\begin{equation}
\label{eq:optim1}
w_p^{\cA, A} = \mathop{\mbox{argmin}}_{v\in V_0} J_p^{\cA, A}(v), 
\end{equation}
where
\begin{equation}
\label{eq:optim2}
J^{\cA, A}_p(v) := \frac{1}{|B|} \int_B (p + \nabla v)^T \cA (p + \nabla v) + \frac{1}{|B|} \int_{\R^d\setminus B} (\nabla v)^T A \nabla v - \frac{2}{|B|} \int_{\Gamma} (Ap\cdot n) v.
\end{equation}
We define the map $\cJ_p^{\cA}: \cM \to \R$ by
\begin{equation}
\label{eq:def_curly_J}
\forall A\in \cM, \quad \cJ_p^{\cA}(A) := J_p^{\cA, A} \left( w_p^{\cA, A} \right) = \min_{v\in V_0} J_p^{\cA, A}(v). 
\end{equation}
The linearity of the map $\R^d \ni p \mapsto w_p^{\cA, A} \in V_0$ yields that, for any $A\in \cM$, the map $\R^d\ni p \mapsto \cJ_p^{\cA}(A)$ is quadratic. As a consequence, for all $A\in \cM$, there exists a unique symmetric matrix $G^{\cA}(A)\in \R^{d\times d}$ such that
\begin{equation}
\label{eq:defGA}
\forall p\in \R^d, \quad \cJ^{\cA}_p(A) = p^T G^{\cA}(A) p. 
\end{equation}

\subsection{Motivation of the embedded corrector problem}

For all $R>0$, let us denote by $B_R$ the open ball of $\R^d$ centered at $0$ of radius $R$. We make the following remark, considering, for the sake of illustration, the stationary ergodic setting. Let $\cA(x,\omega)$ be a stationary random matrix-valued field. A simple scaling argument shows that, in this case, for all $R>0$ and $p\in \R^d$, the unique solution $\widetilde{w}_p^{R,\cA,A}(\cdot, \omega)$ in $V$ to 
\begin{equation}\label{eq:equivrand}
-\mbox{div}\Big( \big( \cA(\cdot,\omega) \chi_{B_R} + A (1-\chi_{B_R}) \big) \big( p + \nabla \widetilde{w}_p^{R,\cA,A}(\cdot, \omega) \big) \Big) = 0 \mbox{ in $\cD'(\R^d)$}, \quad \int_{B_R} \widetilde{w}_p^{R,\cA,A}(\cdot, \omega) = 0,
\end{equation}
satisfies $\dps w_p^{\cA^R(\cdot, \omega),A}(\cdot, \omega) = \frac{1}{R} \, \widetilde{w}_p^{R,\cA,A}(R \cdot, \omega)$, where $\cA^R(x,\omega):= \cA\left(Rx, \omega\right)$ for any $x\in B$. Solving embedded corrector problems of the form~\eqref{eq:pbbase} with $\cA = \cA^R(\cdot, \omega)$ in $B$ is then equivalent to solving~\eqref{eq:equivrand}. Figure~\ref{fig:boules} gives an illustration of the matrix-valued field $\ccA^{R, \omega, A}:= \cA(\cdot,\omega) \, \chi_{B_R} + A \, (1-\chi_{B_R})$. 

\medskip

\begin{figure}[h]
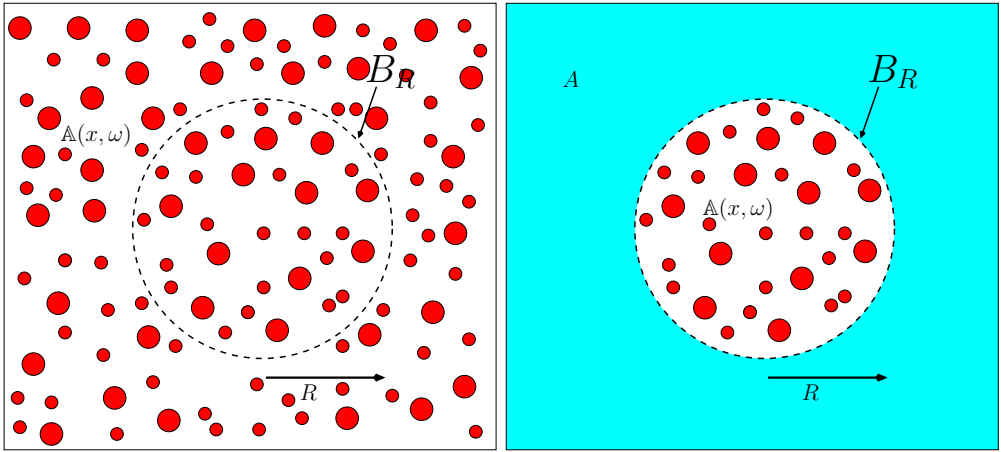

\centering
\scalebox{0.6}{\input{./random3.pstex_t}}
\scalebox{0.6}{\input{./random5.pstex_t}}
\caption{Left: field $\cA(x,\omega)$. Right: field $\ccA^{R,\omega,A}(x)$: outside the ball $B_R$, the field $\cA(x,\omega)$ is replaced by a uniform coefficient $A$.\label{fig:boules}}
\end{figure}

\bigskip

From now on, we consider $\left( \cA^R \right)_{R>0} \subset L^\infty(B; \cM)$ a general family of matrix-valued fields which $G$-converges in the sense of Definition~\ref{def:Gconv} to a {\em constant} matrix $A^\star$ in $B$. Keep in mind that the random stationary ergodic setting provides a prototypical example of such a family of matrix-valued fields. 

\medskip

The rest of the section is devoted to the presentation of different methods for constructing approximate effective matrices, using corrector problems of the form~\eqref{eq:pbbase}. We first present in Section~\ref{sec:def0} a naive definition, which turns out to be non-convergent in general. In the subsequent Sections~\ref{sec:def1}, \ref{sec:def2} and~\ref{sec:def3}, we present three possible choices leading to converging approximations, namely~\eqref{eq:optimisation}, \eqref{eq:def2} and~\eqref{eq:def3}.

\medskip

The motivation for considering problems of the form~\eqref{eq:pbbase} is twofold. First, we show below that the solution $w^{\cA^R, A}_p$ to~\eqref{eq:pbbase} can be used to define consistent approximations of $A^\star$. We refer to Section~\ref{sec:proofs} for the proof that the upcoming approximations~\eqref{eq:optimisation}, \eqref{eq:def2} and~\eqref{eq:def3} converge to $A^\star$ when $R \to \infty$. 

Second, problem~\eqref{eq:pbbase} can be efficiently solved. We recall that, in~\cite{Stamm1, Stamm2}, an efficient numerical method has been introduced to compute the electrostatic interaction of molecules with an infinite continuous solvent medium, based on implicit solvation models. The problem to solve there reads: find $w\in H^1(\Omega)$ solution to
\begin{equation}
\label{eq:pb_stamm}
 - \Delta w = 0 \mbox{ in $\Omega$}, \quad w = g \mbox{ on $\partial \Omega$},
\end{equation}
where $\Omega \subset \R^d$ is a bounded domain composed of the union of a finite but possibly very large number of balls, and $g\in L^2(\partial \Omega)$. As shown in~\cite{Stamm1,Stamm2}, Problem~\eqref{eq:pb_stamm} can be efficiently solved using a numerical approach based on domain decomposition, boundary integral formulation and discretization with spherical harmonics.

\medskip

Inspired by~\cite{Stamm1,Stamm2}, we have developed an efficient algorithm for the resolution of~\eqref{eq:pbbase}, which is somehow similar to the method used for the resolution of~\eqref{eq:pb_stamm}. This algorithm is presented in the companion article~\cite{refpartii}. In short, Problem~\eqref{eq:pbbase} can be efficiently solved using a boundary integral formulation, domain decomposition methods and approximation with spherical harmonics in the case when the matrix-valued field $\cA(x)$ models the diffusion coefficient of a material composed of spherical inclusions embedded in a uniform medium. More precisely, our algorithm is specifically designed to solve~\eqref{eq:pbbase} in the case when, in $B$,
$$
\cA(x)=\left|
\begin{array}{l}
A_{\rm int}^i \mbox{ if } x \in B(x_i,r_i), \ 1 \leq i \leq I,
\\
A_{\rm ext} \mbox{ if } x \in B \setminus \bigcup_{i=1}^I B(x_i, r_i),
\end{array}
\right.
$$
for some $I\in\N^\star$, $A_{\rm int}^i, A_{\rm ext} \in \cM$ for any $1 \leq i \leq I$, $(x_i)_{1\leq i \leq I}\subset B$ and $(r_i)_{1\leq i \leq I}$ some set of positive real numbers such that $\bigcup_{i=1}^I B(x_i,r_i) \subset B$ and $B(x_i,r_i) \cap B(x_j,r_j) = \emptyset$ for all $1\leq i \neq j \leq I$. We have denoted by $B(x_i,r_i) \subset \R^d$ the open ball of radius $r_i$ centered at $x_i$. We refer the reader to~\cite{refpartii} for more details on our numerical method. 

The approach we propose in this article is thus particularly suited for the homogenization of stochastic heterogeneous materials composed of spherical inclusions (see again Figure~\ref{fig:boules}). The properties of the inclusions (i.e. the coefficients $A_{\rm int}^i$), their center $x_i$ and their radius $r_i$ may be random, as long as $\cA$ is stationary. In particular, this algorithm enables to compute very efficiently the effective thermal properties of polydisperse materials.

\subsection{A failed attempt to define a homogenized matrix}\label{sec:def0}

It is of common knowledge in the homogenization community that {\em $G$-convergence is not sensitive to the choice of boundary conditions}, see e.g.~\cite[p.~27]{Allaire}. Thus, at first glance, one could naively think that it would be sufficient to choose a fixed matrix $A\in \cM$, define $w_p^{\cA^R, A}$ for any $p \in \R^d$ and $R>0$ as the unique solution in $V_0$ to~\eqref{eq:pbbase} with $\cA = \cA^R$, and introduce, in the spirit of~\eqref{eq:defper}, the matrix $A_0^R$ defined by
\begin{equation}
\label{eq:def0}
\forall p\in \R^d, \quad A^R_0 p = \frac{1}{|B|} \int_B \cA^R \left( p + \nabla w_p^{\cA^R, A} \right). 
\end{equation}
However, as implied by the following lemma, the sequence $\left( A^R_0 \right)_{R>0}$ defined by~\eqref{eq:def0} {\em does not} converge in general to $A^\star$ as $R$ goes to infinity. Imposing the value of the exterior matrix $A$ in~\eqref{eq:pbbase} is actually much stronger than imposing some (non-oscillatory) boundary conditions on a truncated corrector problem as in~\eqref{eq:correctorrandom-N3}. It turns out that the sequence $\left( A^R_0 \right)_{R>0}$ actually converges as $R$ goes to infinity, but that its limit {\em depends} on the exterior matrix $A$.

\medskip

The following lemma, the proof of which is postponed until Section~\ref{sec:prooflem1}, is not only interesting to guide the intuition. It is also essential in our analysis, in particular for the identification of the limit of $A_0^R$.

\begin{lemma}\label{lem:lem1}
Let $(\cA^R)_{R>0}$ and $(A^R)_{R>0}$ be two sequences such that, for any $R>0$, $\cA^R\in L^\infty(B, \cM)$ and $A^R \in \cM$. We assume that $(\cA^R)_{R>0}$ $G$-converges to a matrix-valued field $\cA^\star\in L^\infty(B,\cM)$ on $B$ and that $(A^R)_{R>0}$ converges to some $A^\infty\in \cM$.
 
For any $R>0$ and $p\in \R^d$, let $w_p^{\cA^R, A^R}$ be the unique solution in $V_0$ to
\begin{equation}
\label{eq:pbn}
-\mbox{\div}\left( {\cal A}^{\cA^R,A^R} \left(p + \nabla w_p^{\cA^R, A^R}\right)\right) = 0 \mbox{ in $\cD'(\R^d)$},
\end{equation}
where 
$$
{\cal A}^{\cA^R,A^R}(x):=\left\{
\begin{array}{l}
 \cA^R(x) \mbox{ if $x\in B$},\\
A^R \mbox{ otherwise}.
\end{array} \right.
$$
Then, the sequence $\left(w^{\cA^R, A^R}_p \right)_{R>0}$ weakly converges in $H^1_{\rm loc}(\R^d)$ to $w_p^{\cA^\star, A^\infty}$, which is the unique solution in $V_0$ to 
\begin{equation}
\label{eq:infty}
-\mbox{\div}\left( {\cal A}^{\cA^\star,A^\infty} \left(p + \nabla w_p^{\cA^\star, A^\infty} \right)\right) = 0 \mbox{ in $\cD'(\R^d)$}, 
\end{equation}
where
$$
{\cal A}^{\cA^\star,A^\infty}(x):= 
\left\{ \begin{array}{l}
\cA^\star(x) \mbox{ if $x\in B$},\\
A^\infty \mbox{ otherwise}.
\end{array}\right. 
$$
Moreover, 
\begin{equation}
\label{eq:infty2}
{\cal A}^{\cA^R,A^R} \left(p + \nabla w_p^{\cA^R, A^R}\right) \mathop{\wlim}_{R\to +\infty} {\cal A}^{\cA^\star,A^\infty} \left(p + \nabla w_p^{\cA^\star, A^\infty}\right) \mbox{ weakly in $L^2_{\rm loc}(\R^d)$}. 
\end{equation}
\end{lemma}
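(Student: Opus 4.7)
\medskip
\noindent\textbf{Proof proposal.} The plan is to pass to the limit in a weakly converging subsequence of $(w_p^{\cA^R,A^R})_{R>0}$, identify the limit separately in $B$ (using $G$-convergence) and in $\R^d\setminus B$ (by direct strong–weak multiplication), and conclude by uniqueness of the limit problem.

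\medskip

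First, I would establish uniform bounds in $V_0$. Taking $v = w_p^{\cA^R,A^R}$ in the variational formulation~\eqref{eq:FV} associated with~\eqref{eq:pbn} and using the coercivity of both $\cA^R$ and $A^R$ (with the uniform constant $\alpha$) yields
$$
\alpha \, \|\nabla w_p^{\cA^R,A^R}\|_{L^2(\R^d)}^2
\leq \beta|p|\,|B|^{1/2}\,\|\nabla w_p^{\cA^R,A^R}\|_{L^2(B)}
+ \beta|p|\,|\Gamma|^{1/2}\,\|w_p^{\cA^R,A^R}\|_{L^2(\Gamma)}.
$$
The normalization $\int_B w_p^{\cA^R,A^R}=0$, combined with the Poincaré--Wirtinger inequality on $B$ and the trace inequality on $\Gamma$, controls $\|w_p^{\cA^R,A^R}\|_{L^2(\Gamma)}$ by $\|\nabla w_p^{\cA^R,A^R}\|_{L^2(B)}$, so that $\|\nabla w_p^{\cA^R,A^R}\|_{L^2(\R^d)} \leq C|p|$ uniformly in $R$. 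Using the Poincaré--Wirtinger inequality on any ball containing $B$, we deduce that $(w_p^{\cA^R,A^R})_{R>0}$ is bounded in $H^1_{\rm loc}(\R^d)$. Extract a subsequence (not relabelled) such that $w_p^{\cA^R,A^R} \wlim w^\infty$ weakly in $H^1_{\rm loc}(\R^d)$ and $\nabla w_p^{\cA^R,A^R} \wlim \nabla w^\infty$ weakly in $L^2(\R^d)$. The Rellich--Kondrachov theorem gives $\int_B w^\infty = 0$, so $w^\infty \in V_0$. The flux $\ccA^{\cA^R,A^R}(p+\nabla w_p^{\cA^R,A^R})$, uniformly bounded in $L^2(\R^d)$, also converges weakly up to a subsequence to some $\xi \in L^2(\R^d)$, and $\mathrm{div}\,\xi=0$ in $\cD'(\R^d)$.

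\medskip

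Next, I would identify $\xi$ on each side of $\Gamma$. On $\R^d\setminus B$, the coefficient is the constant matrix $A^R \to A^\infty$, so the strong–weak convergence gives $\xi = A^\infty(p+\nabla w^\infty)$ in $L^2(\R^d\setminus B)$. On $B$, introduce $v^R := w_p^{\cA^R,A^R} + p\cdot x$ restricted to $B$; then $v^R$ is bounded in $H^1(B)$, converges weakly to $v^\infty := w^\infty + p\cdot x$, and solves $-\mathrm{div}(\cA^R \nabla v^R)=0$ in $\cD'(B)$. Applying the $G$-convergence compactness result of Jikov (Theorem~5.2 of~\cite{Jikov}, recalled in the paper as Theorem~\ref{th:th1}), we obtain $\cA^R \nabla v^R \wlim \cA^\star \nabla v^\infty$ weakly in $L^2(B)$, i.e.
$$
\cA^R(p+\nabla w_p^{\cA^R,A^R}) \wlim \cA^\star(p+\nabla w^\infty) \quad \text{weakly in $L^2(B)$},
$$
so that $\xi = \cA^\star(p+\nabla w^\infty)$ on $B$. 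Combined with the analysis on $\R^d \setminus B$, this proves~\eqref{eq:infty2}. Since $\mathrm{div}\,\xi=0$ in $\cD'(\R^d)$ and $w^\infty \in V_0$, uniqueness of the solution to~\eqref{eq:infty} yields $w^\infty = w_p^{\cA^\star,A^\infty}$; this also forces the convergence of the whole sequence, not merely of the extracted subsequence.

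\medskip

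The main obstacle is the passage to the limit inside $B$, where $\cA^R$ converges only in the $G$-sense and $p$ is a fixed vector, so the flux $\cA^R(p+\nabla w_p^{\cA^R,A^R})$ cannot be treated by a naive strong–weak product (we have no strong convergence of $\cA^R$). The reformulation $v^R = w_p^{\cA^R,A^R} + p\cdot x$ is what allows us to reduce to the standard hypothesis of Theorem~\ref{th:th1}, namely a bounded sequence in $H^1(B)$ solving $-\mathrm{div}(\cA^R \nabla v^R)=0$, despite the fact that $v^R$ does not satisfy homogeneous Dirichlet boundary conditions on $\partial B$; the locality of $G$-convergence makes this permissible.
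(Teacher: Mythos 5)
Your proposal is correct and follows essentially the same route as the paper: a coercivity-based uniform bound on $\|\nabla w_p^{\cA^R,A^R}\|_{L^2(\R^d)}$ obtained by testing the weak formulation with the solution itself (using the Poincaré–Wirtinger and trace inequalities on $B$), extraction of a weakly convergent subsequence in $H^1_{\rm loc}(\R^d)$ with gradient weakly convergent in $L^2(\R^d)$, identification of the flux limit inside $B$ via Jikov's $G$-convergence compactness theorem (the paper's Theorem~\ref{th:th1}) and outside $B$ via strong--weak product with the constant matrices $A^R\to A^\infty$, and finally identification of the limit by uniqueness in $V_0$. The only cosmetic difference is your substitution $v^R=w_p^{\cA^R,A^R}+p\cdot x$ to bring the interior equation to the form $-\mathrm{div}(\cA^R\nabla v^R)=0$; the paper invokes Theorem~\ref{th:th1} directly in the form $-\mathrm{div}(\cA^R(p+\nabla w))=0$, which is equivalent.
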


We now briefly show how to use the above lemma to study the limit of $\left( A_0^R\right)_{R>0}$ defined by~\eqref{eq:def0}. From Lemma~\ref{lem:lem1}, we immediately deduce that 
$$
\lim_{R \to \infty} A^R_0 p 
= 
\frac{1}{|B|}\int_B A^\star \left(p + \nabla w_p^{A^\star, A} \right)
=
A^\star p + A^\star \frac{1}{|B|} \int_B \nabla w_p^{A^\star, A}.
$$ 
The above right-hand side is different from $A^\star p$ in general, unless $A=A^\star$, as stated in the following lemma, the proof of which is given in Section~\ref{sec:prooflem2}.

\begin{lemma}\label{lem:lem2}
Let $A^\star, A\in \cM$ and for all $p\in \R^d$, let $w_p^{A^\star, A}$ be the unique solution in $V_0$ to 
\begin{equation}
  \label{eq:taiwan}
- \mbox{\rm div}\left( \ccA^{A^\star, A}\left( p + \nabla w_p^{A^\star, A}\right) \right)  = 0 \mbox{ in $\cD'(\R^d)$},
\end{equation}
where 
$$
\ccA^{A^\star, A}(x):=\left\{
\begin{array}{cc}
A^\star & \mbox{ if $x\in B$},\\
A & \mbox{ otherwise}.
\end{array}
\right .
$$
Then, 
$$
\left[ \forall p\in \R^d, \ \ A^\star p = \frac{1}{|B|}\int_B A^\star \left(p + \nabla w_p^{A^\star, A} \right)\right] \quad \mbox{ if and only if } \quad A = A^\star.
$$
\end{lemma}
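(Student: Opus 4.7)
My plan is to handle the two implications separately. The forward direction is essentially trivial: when $A = A^\star$, the coefficient field $\ccA^{A^\star, A^\star}$ reduces to the constant matrix $A^\star$ on all of $\R^d$, so \eqref{eq:taiwan} becomes $-\div(A^\star \nabla w_p^{A^\star, A^\star}) = 0$ in $\cD'(\R^d)$. By uniqueness in $V_0$, $w_p^{A^\star, A^\star} \equiv 0$, and the identity is immediate.

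For the converse, I would assume the identity holds for every $p \in \R^d$. Since $A^\star \in \cM$ is invertible, this is equivalent to requiring $\int_B \nabla w_p^{A^\star, A} = 0$ for every $p$. The key step is to test the variational formulation~\eqref{eq:FV} (with $\cA = A^\star$) against $v = w_p^{A^\star, A}$ itself. Expanding $(\nabla w_p)^T A^\star(p + \nabla w_p)$ and exploiting the fact that $A^\star p$ and $Ap$ are constant vectors, two of the resulting contributions are scalar multiples of $\int_B \nabla w_p$: the linear bulk term $\int_B (\nabla w_p)^T A^\star p$ equals $A^\star p \cdot \int_B \nabla w_p$, while the boundary term $\int_\Gamma (Ap \cdot n)\, w_p$ equals $Ap \cdot \int_B \nabla w_p$ via the divergence theorem on $B$. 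Both vanish under our hypothesis, leaving
$$\int_B (\nabla w_p)^T A^\star \nabla w_p + \int_{\R^d \setminus B} (\nabla w_p)^T A \nabla w_p = 0.$$
Coercivity of $A^\star$ and $A$ then forces $\nabla w_p \equiv 0$ a.e.\ in $\R^d$; combined with $w_p \in H^1_{\mathrm{loc}}(\R^d)$ and $\int_B w_p = 0$, this yields $w_p^{A^\star, A} \equiv 0$.

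To conclude, I would substitute $w_p \equiv 0$ back into~\eqref{eq:taiwan}, obtaining $\div(\ccA^{A^\star, A} p) = 0$ in $\cD'(\R^d)$ for every $p$. Since $\ccA^{A^\star, A} p$ equals $A^\star p$ in $B$ and $Ap$ in $\R^d \setminus B$, testing against an arbitrary $\phi \in \cD(\R^d)$ and applying the divergence theorem on $B$ and on $\R^d \setminus B$ separately produces the flux-jump identity $(A^\star - A)p \cdot \int_\Gamma \phi \, n = 0$. As $\phi$ varies over $\cD(\R^d)$, the vector $\int_\Gamma \phi \, n$ spans all of $\R^d$, so $(A^\star - A)p = 0$ for every $p$, i.e., $A = A^\star$. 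The only place requiring genuine care is the bookkeeping of signs of outward normals and the control of boundary contributions at infinity during the distributional manipulations on the unbounded exterior $\R^d \setminus B$; the conditions $w_p \in V_0$ and $\phi \in \cD(\R^d)$ ensure that nothing leaks from infinity, so the argument collapses cleanly to an algebraic identity on the sphere $\Gamma$.
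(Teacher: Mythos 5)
Your proof is correct and follows essentially the same route as the paper's: the forward direction is trivial, and for the converse you deduce $\int_B \nabla w_p^{A^\star,A}=0$, test the variational formulation against $w_p$ itself to kill all cross terms and force $\nabla w_p\equiv 0$ by coercivity, then extract the flux-jump condition on $\Gamma$ to conclude $A=A^\star$. The only cosmetic difference is that the paper outsources the final step (from $\div(\ccA^{A^\star,A}p)=0$ to $A^\star=A$) to its Lemma~\ref{lem:tech}, whereas you re-derive it inline.
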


\medskip

Thus, we have to find how to define a sequence of constant exterior matrices $(A^R)_{R>0} \subset \cM$ such that problem~\eqref{eq:pbbase} with $\cA = \cA^R$ and $A = A^R$ enables us to introduce converging approximations of $A^\star$. In Sections~\ref{sec:def1}, \ref{sec:def2} and~\ref{sec:def3}, we present three possible choices, which yield three alternative definitions of approximate homogenized matrices that all converge to $A^\star$ when $R \to \infty$.

\subsection{First definition: minimizing the energy of the corrector}
\label{sec:def1}

To gain some intuition, we first recast~\eqref{eq:pbbase} as
$$
-\mbox{\div}\left[ \Big(A + \chi_B (\cA -A) \Big) \, \Big(p + \nabla w_p^{\cA, A} \Big)\right] = 0 \mbox{ in $\cD'(\R^d)$}.
$$
Thus, in this problem, the quantity $\cA-A$ can be seen as a local perturbation in $B$ to the constant homogeneous exterior medium characterized by the diffusion coefficient $A$. In particular, in the case of a perfectly homogeneous infinite medium (when $\cA = A$), the unique solution $w_p^{\cA, A}$ to the above equation is $w_p^{\cA, A}=0$. In the context of homogenization, when the inner matrix-valued coefficient $\cA$ is fixed, a natural idea is then to define the value of the exterior matrix $A$ as {\em the matrix so that the energy $\cJ^{\cA}_p(A)$ of $w_p^{\cA, A}$ (which is always non-positive) is as close to 0 as possible (i.e. as small as possible in absolute value)}. In order to define a more isotropic criterion, we consider the maximization of the quantity $\dps \sum_{i=1}^d \cJ^{\cA}_{e_i}(A)$ rather than $\cJ^{\cA}_p(A)$. This motivates our first definition~\eqref{eq:optimisation}. 

We have the following result, the proof of which is postponed until Section~\ref{sec:proofconcavity}. We recall that $\cJ^{\cA}_p$ and $G^{\cA}$ are defined by~\eqref{eq:def_curly_J} and~\eqref{eq:defGA}.

\begin{lemma}
\label{lem:lemconcavity}
For any $\cA\in L^\infty(B,\cM)$, the function $\dps \mathcal{J}^{\cA}: \cM \ni A \mapsto \sum_{i=1}^d \cJ^{\cA}_{e_i}(A) = \mbox{\rm Tr}\left(G^{\cA}(A)\right)$ is concave. Moreover, when $d \leq 3$, $\mathcal{J}^{\cA}$ is strictly concave. 
\end{lemma}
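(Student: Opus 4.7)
}

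The strategy rests on the observation that, for fixed $v \in V_0$, $p \in \R^d$, and $\cA \in L^\infty(B,\cM)$, the map
\[
\cM \ni A \mapsto J^{\cA,A}_p(v) = \frac{1}{|B|}\int_B (p+\nabla v)^T \cA (p+\nabla v) + \frac{1}{|B|}\int_{\R^d\setminus B}(\nabla v)^T A\nabla v - \frac{2}{|B|}\int_\Gamma (Ap\cdot n)\, v
\]
is \emph{affine} in $A$: the first term is independent of $A$, the second is linear in the entries of $A$, and the third is also linear. Consequently, $\cJ^{\cA}_p(A) = \inf_{v \in V_0} J^{\cA,A}_p(v)$ is a pointwise infimum of affine functions of $A$, which is concave. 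Summing over $p = e_i$, $\mathcal{J}^{\cA}$ is concave as a sum of concave functions. This settles the first assertion; no further work is needed.

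For strict concavity when $d \leq 3$, I would argue by contradiction. Suppose $A_1, A_2 \in \cM$ with $A_1 \neq A_2$, and for some $t \in (0,1)$ equality holds in Jensen's inequality for $\mathcal{J}^{\cA}$. Since each $\cJ^{\cA}_{e_i}$ is concave, equality for the sum forces equality for each index $i$. Fix $i$ and set $A_t = tA_1+(1-t)A_2$. Using the affinity of $v \mapsto J^{\cA,A}_{e_i}(v)$ in $A$ together with the strict convexity and uniqueness of minimizer of $v\mapsto J^{\cA,A_t}_{e_i}(v)$, the equality $\cJ^{\cA}_{e_i}(A_t) = t\cJ^{\cA}_{e_i}(A_1)+(1-t)\cJ^{\cA}_{e_i}(A_2)$ implies the common minimizer is $w^{\cA,A_1}_{e_i}=w^{\cA,A_2}_{e_i}$. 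By linearity in $p$, this gives $w^{\cA,A_1}_p = w^{\cA,A_2}_p =: w_p$ for every $p\in\R^d$. Writing equation~\eqref{eq:pbbase} for both $A_1$ and $A_2$ and subtracting, setting $E := A_1-A_2$, one obtains in $\R^d\setminus B$
\[
\mbox{\rm div}(E\nabla w_p)=0,
\]
together with the transmission identity $E(p+\nabla w_p)\cdot n = 0$ on $\Gamma$, for every $p\in\R^d$.

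The remaining step—and the \emph{main obstacle}—is to show that these conditions, satisfied simultaneously for $p=e_1,\dots,e_d$, force $E=0$ when $d\leq 3$. My plan is to exploit the fact that $B$ is a ball: expand each $w_p|_{\R^d\setminus B}$ in a basis of decaying solutions to $-\mbox{\rm div}(A_1\nabla\cdot)=0$ (spherical harmonics after a linear change of variables $y=A_1^{-1/2}x$, using the decay $\nabla w_p\in L^2(\R^d\setminus B)$). The boundary condition $E(p+\nabla w_p)\cdot n = 0$ on $\Gamma$ then becomes, for $x\in\Gamma$, a scalar identity of the form $(Ex)_i + x^T E \nabla w_{e_i}(x)=0$. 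The leading (low-order) spherical harmonic coefficients of this relation—which are the only terms surviving when $d\leq 3$ because of the limited number of spherical harmonics of low degree in low dimension—produce a linear system on the entries of $E$ that has only the trivial solution. For $d\geq 4$, extra harmonic modes of higher degree exist that might absorb nonzero contributions of $E$, which would explain why the result is not stated in all dimensions. I would carry out this harmonic-expansion argument either directly in the case $\cA = $ constant, where $w_p$ can be computed explicitly, and then reduce the general case to this one by using Lemma~\ref{lem:lem1} (via a scaling/$G$-convergence argument), or alternatively by working directly with the Dirichlet-to-Neumann map of the exterior problem and invoking its injectivity in the parameter $A$ for $d\leq 3$.
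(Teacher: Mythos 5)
The concavity argument is correct and identical to the paper's: $J^{\cA,A}_p(v)$ is affine in $A$ for each fixed $v$, so $\cJ^{\cA}_p(A)=\inf_{v\in V_0}J^{\cA,A}_p(v)$ is concave, and so is the sum $\mathcal{J}^{\cA}$. The setup for strict concavity is also correct and matches the paper: equality in Jensen for the sum forces equality for each index $i$, and since the minimizer of $J^{\cA,A}_p$ is unique, one gets $w^{\cA,A_1}_p=w^{\cA,A_2}_p=:w_p$ for all $p$, whence $\mbox{\rm div}(E\,\nabla w_p)=0$ in $\R^d\setminus\overline{B}$ and $E(p+\nabla w_p)\cdot n=0$ on $\Gamma$ with $E:=A_1-A_2$. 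Your derivation of these facts from the variational formulation is sound.

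The core of the proof, however, is left at the level of a plan, and the plan has two real problems. First, the exterior analysis does not lead directly to a linear system forcing $E=0$; it produces a \emph{dichotomy}. The paper passes through an auxiliary technical result (Lemma~\ref{lem:dipole}) on the Newtonian potential of a quadrupole layer on a sphere in $d=3$, and what the exterior argument yields is: either $A_1$ and $A_2$ are \emph{proportional}, or each $w_i$ is \emph{constant} in $\R^d\setminus\overline{B}$. In the proportional case ($A_2=(1+\mu)A_1$), the spherical-harmonic mechanism gives no information about $E$; the paper then needs a separate argument using the \emph{interior} transmission conditions and the energy identity~\eqref{eq:expJ3} to rule out $\mu\neq 0$. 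In the non-proportional case, $\nabla w_i=0$ outside and the flux continuity on $\Gamma$ gives $A_1e_i=A_2e_i$ for all $i$. Your proposal collapses both branches into a single ``linear system on $E$'', and it is precisely the proportional branch (where $\Phi=3A_1^{-1/2}(A_2-A_1)A_1^{-1/2}-\mathrm{Tr}(\cdot)\,I_3$ vanishes) that cannot be handled by the exterior expansion alone.

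Second, the alternative reduction ``work in the case $\cA$ constant, then reduce the general case via Lemma~\ref{lem:lem1}/$G$-convergence'' does not work. Lemma~\ref{lem:lem1} is a statement about limits of sequences $\cA^R$ as $R\to\infty$; it says nothing about a \emph{fixed} $\cA\in L^\infty(B,\cM)$, and $G$-convergence arguments cannot establish strict concavity of $\mathcal{J}^{\cA}$ for an arbitrary given $\cA$ from the constant case. The paper indeed exploits that $w_i$ is an $A_1$-harmonic function in the exterior (hence representable as an $A_1$-single-layer potential), but the integral-identity/harmonic-polynomial-dimension-count argument of Lemma~\ref{lem:dipole} is what actually closes the gap. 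Your intuition that this is the crucial step, and that it should fail or require more work in higher dimension, is correct; but the argument needs to be carried out — the kernel-dimension count for the map $p_{l+2}\mapsto\Phi:D^2p_{l+2}$ on harmonic homogeneous polynomials is the concrete content — and the proportional/non-proportional dichotomy cannot be skipped.
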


Since we are interested in practical aspects, we did not investigate the case $d \geq 4$, but we are confident that our arguments could be extended to higher dimensions. 

\medskip

We infer from Lemma~\ref{lem:lemconcavity} that, for any $R>0$, there exists a matrix $A^R_1 \in \cM$ such that
\begin{equation}
\label{eq:optimisation}
A^R_1 \in \mathop{\mbox{argmax }}_{A\in \cM} \sum_{i=1}^d \cJ_{e_i}^{\cA^R}(A) = \mathop{\mbox{argmax }}_{A\in \cM} \mbox{Tr}\left(G^{\cA^R}(A)\right),
\end{equation}
where we recall that $\cA^R = \cA(R \cdot)$. Moreover, in dimension $d \leq 3$, this matrix is unique. Such a matrix $A^R_1$ can be seen as a matrix which minimizes the absolute value of the sum of the energies of the corrector functions $w_{e_i}^{\cA^R,A}$ over all possible $A\in \cM$. Indeed, using the equivalent expression~\eqref{eq:expJ3} of $\cJ_p^{\cA^R}(A)$ given below, we have that
$$
A^R_1 \in \mathop{\mbox{argmin }}_{A\in \cM} \sum_{i=1}^d \left( \int_B \left( \nabla w_{e_i}^{\cA^R, A}\right)^T \cA^R \nabla w_{e_i}^{\cA^R, A} + \int_{\R^d \setminus B} \left( \nabla w_{e_i}^{\cA^R, A}\right)^T A \nabla w_{e_i}^{\cA^R, A} \right).
$$
This provides a justification of the definition of $A^R_1$ by~\eqref{eq:optimisation}. 

\medskip

As shown in Proposition~\ref{prop:prop1} below, $A^R_1$ is a converging approximation of $A^\star$.

\subsection{Second definition: an averaged effective matrix}
\label{sec:def2}

We present here a second natural way to define an effective approximation of the homogenized matrix using the matrix $A^R_1$ defined in the previous section. The idea is to define the matrix $A^R_2 \in \cM$ such that, formally, for any $p \in \RR^d$,
\begin{multline}
  \label{eq:maison2}
\int_B \left[\left(p + \nabla w_p^{\cA^R, A^R_1}\right)^T \cA^R \left(p + \nabla w_p^{\cA^R, A^R_1}\right) - p^T A^R_2 p \right] 
\\
+ \int_{\R^d\setminus B}\left[ \left(p + \nabla w_p^{\cA^R, A^R_1}\right)^T A^R_1 \left(p + \nabla w_p^{\cA^R, A^R_1}\right) - p^T A^R_1 p \right] = 0.
\end{multline}
Formally, we thus ask that the energy of $p \cdot x + w_p^{\cA^R, A^R_1}(x)$ (measured with the energy associated to $\ccA^{\cA^R, A^R_1}(x)$) is equal to the energy of $p \cdot x$ (measured with the energy associated to $\ccA^{A^R_2, A^R_1}(x)$).

Note however that the second term in~\eqref{eq:maison2} is not necessarily well-defined, since we may have $\nabla w_p^{\cA^R, A^R_1} \not\in \left[L^1(\R^d\setminus B)\right]^d$. Formally, the above relation reads
\begin{multline*}
\frac{1}{|B|} \int_B \left(p + \nabla w_p^{\cA^R, A^R_1}\right)^T \cA^R \left(p + \nabla w_p^{\cA^R, A^R_1}\right)
\\
+ \frac{1}{|B|} \int_{\R^d\setminus B} \left(\nabla w_p^{\cA^R, A^R_1}\right)^T A^R_1 \nabla w_p^{\cA^R, A^R_1} 
- 
\frac{2}{|B|} \int_{\R^d\setminus B} (A^R_1 p \cdot n) \, w_p^{\cA^R, A^R_1} 
= 
p^T A^R_2 p, 
\end{multline*}
where now all the terms are well-defined. In view of~\eqref{eq:optim2} and~\eqref{eq:def_curly_J}, the above relation reads
\begin{equation}
\label{eq:def2}
\forall p\in\R^d, \quad p^T A_2^R p = J_p^{\cA^R,A^R_1}\left(w_p^{\cA^R, A^R_1}\right) = \cJ_p^{\cA^R}(A^R_1),
\end{equation}
which implies, in view of~\eqref{eq:defGA}, that
$$
A_2^R = G^{\cA^R}(A^R_1),
$$
where $A^R_1$ is a solution to~\eqref{eq:optimisation}.

\medskip

We prove the following convergence result in Section~\ref{sec:proofprop1}. 

\begin{proposition}\label{prop:prop1}
Let $(\cA^R)_{R>0} \subset L^\infty(B, \cM)$ be a family of matrix-valued fields which $G$-converges in $B$ to a constant matrix $A^\star \in \cM$ as $R$ goes to infinity. 

Then, the sequences of matrices $\left( A^R_1 \right)_{R>0}$ and $\left( A^R_2 \right)_{R>0}$, respectively defined by~\eqref{eq:optimisation} and~\eqref{eq:def2}, satisfy
$$
A^R_1 \mathop{\longrightarrow}_{R\to +\infty} A^\star 
\quad \mbox{ and } \quad
A^R_2 \mathop{\longrightarrow}_{R\to +\infty} A^\star. 
$$
\end{proposition}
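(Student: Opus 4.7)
The plan is to argue by compactness: since $\cM$ is a compact subset of $\R^{d\times d}$, it suffices to show that every cluster point $A^\infty\in\cM$ of $(A_1^R)_{R>0}$ equals $A^\star$. I extract a subsequence (still denoted by $R$) such that $A_1^R\to A^\infty$. Applying Lemma~\ref{lem:lem1} with $\cA^R$ $G$-converging to the constant matrix $A^\star$ and $A^R=A_1^R\to A^\infty$ yields $w_p^{\cA^R,A_1^R}\wlim w_p^{A^\star,A^\infty}$ weakly in $H^1_{\rm loc}(\R^d)$ together with the convergence of fluxes~\eqref{eq:infty2}. The same lemma with the constant sequence $A^R\equiv A$ also gives $w_p^{\cA^R,A}\wlim w_p^{A^\star,A}$ for every fixed $A\in\cM$. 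The goal is then to pass to the limit in the maximization condition~\eqref{eq:optimisation} and to identify $A^\infty$ via Lemma~\ref{lem:lem2}.

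The key analytical step is the pointwise convergence $\cJ_p^{\cA^R}(A_1^R)\to\cJ_p^{A^\star}(A^\infty)$ and $\cJ_p^{\cA^R}(A)\to\cJ_p^{A^\star}(A)$ for every fixed $A\in\cM$. Testing the variational formulation~\eqref{eq:FV} with $v=w_p^{\cA,A}$ and substituting into~\eqref{eq:optim2} gives the identity
\begin{equation*}
\cJ_p^{\cA}(A)=\frac{1}{|B|}\int_B p^T\cA\left(p+\nabla w_p^{\cA,A}\right)-\frac{1}{|B|}\int_\Gamma(Ap\cdot n)\,w_p^{\cA,A},
\end{equation*}
which is amenable to weak convergence: the volume term converges by~\eqref{eq:infty2}, and the boundary term converges thanks to the compactness of the trace map $H^1(B(0,2))\hookrightarrow L^2(\Gamma)$ (Rellich combined with the continuity of the trace). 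Applied to both $(\cA^R,A_1^R)$ and $(\cA^R,A)$, this delivers the announced convergences. Passing to the limit in the inequality $\sum_{i=1}^d \cJ_{e_i}^{\cA^R}(A_1^R)\geq\sum_{i=1}^d\cJ_{e_i}^{\cA^R}(A)$, I then obtain
\begin{equation*}
\sum_{i=1}^d\cJ_{e_i}^{A^\star}(A^\infty)\geq\sum_{i=1}^d\cJ_{e_i}^{A^\star}(A)\qquad\forall A\in\cM,
\end{equation*}
so $A^\infty$ is a maximizer of $A\mapsto\mathrm{Tr}(G^{A^\star}(A))$.

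To identify $A^\infty$, I would observe that the choice $v=0$ in~\eqref{eq:optim2} gives $\cJ_p^{A^\star}(A)\leq J_p^{A^\star,A}(0)=p^T A^\star p$, with equality for $A=A^\star$ since the unique solution to~\eqref{eq:pbbase} with $\cA=A=A^\star$ is $w_p\equiv 0$. Thus $A^\star$ is also a maximizer, and the bound $\sum_i\cJ_{e_i}^{A^\star}(A^\infty)\leq\sum_i e_i^T A^\star e_i=\mathrm{Tr}(A^\star)$ is saturated; since each term is individually bounded above by $e_i^T A^\star e_i$, termwise equality must hold, i.e.\ $\cJ_{e_i}^{A^\star}(A^\infty)=J_{e_i}^{A^\star,A^\infty}(0)$ for every $i$. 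The strict convexity of the quadratic functional $J_{e_i}^{A^\star,A^\infty}$ on $V_0$ (inherited from the coercivity of its bilinear part) then forces its unique minimizer $w_{e_i}^{A^\star,A^\infty}$ to vanish, so by linearity $w_p^{A^\star,A^\infty}=0$ for every $p$, and Lemma~\ref{lem:lem2} yields $A^\infty=A^\star$. The convergence of $A_2^R$ is immediate: from $p^T A_2^R p=\cJ_p^{\cA^R}(A_1^R)\to\cJ_p^{A^\star}(A^\star)=p^T A^\star p$ for every $p$ and polarization, $A_2^R\to A^\star$ along the same subsequence. Since every subsequence admits a further subsequence with the same limit $A^\star$, both full sequences converge. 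The main difficulty is the simultaneous passage to the limit in $\cJ_p^{\cA^R}(A_1^R)$ when both $\cA^R$ and $A_1^R$ vary with $R$, which is exactly what the Euler--Lagrange-based rewriting above combined with Lemma~\ref{lem:lem1} is designed to resolve.
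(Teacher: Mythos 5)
Your proof is correct and takes essentially the same route as the paper's: you extract a subsequence $A_1^R\to A^\infty$, use Lemma~\ref{lem:lem1} together with the representation~\eqref{eq:expJ2} and trace compactness to pass to the limit in the maximization inequality, force $w_{e_i}^{A^\star,A^\infty}=0$ for all $i$, and identify $A^\infty=A^\star$ via Lemma~\ref{lem:lem2}. The two cosmetic deviations — passing to the limit against every competitor $A\in\cM$ rather than only $A=A^\star$, and deducing the vanishing of the limit corrector from the uniqueness of the minimizer of the strictly convex $J_{e_i}^{A^\star,A^\infty}$ rather than from the explicit identity~\eqref{eq:rel1} — are immaterial.
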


\subsection{Third definition: a self-consistent effective matrix}
\label{sec:def3}

We eventually introduce a third definition, inspired by~\cite{Christensen}. Let us assume that, for any $R>0$, there exists a matrix $A^R_3 \in \cM$ such that  
\begin{equation}
\label{eq:def3}
A^R_3  = G^{\cA^R}(A^R_3).
\end{equation}
Such a matrix formally satisfies the following equation (see~\eqref{eq:maison2}): for all $p\in \R^d$,
\begin{multline*}
\int_B \left[\left(p + \nabla w_p^{\cA^R, A^R_3}\right)^T \cA^R \left(p + \nabla w_p^{\cA^R, A^R_3}\right) - p^T A^R_3 p \right] 
\\
+ \int_{\R^d\setminus B}\left[ \left(p + \nabla w_p^{\cA^R, A^R_3}\right)^T A^R_3 \left(p + \nabla w_p^{\cA^R, A^R_3}\right) - p^T A^R_3 p \right] = 0. 
\end{multline*}
Formally, the energy of $p \cdot x + w_p^{\cA^R, A^R_3}(x)$ (measured with the energy associated to $\ccA^{\cA^R, A^R_3}(x)$) is equal to the energy of $p \cdot x$ (measured with the energy associated to $A^R_3$).

\medskip

This third definition also yields a converging approximation of $A^\star$, as stated in the following proposition which is proved in Section~\ref{sec:proofprop2}: 
\begin{proposition}
\label{prop:prop2}
Let $(\cA^R)_{R>0} \subset L^\infty(B, \cM)$ be a family of matrix-valued fields which $G$-converges in $B$ to a constant matrix $A^\star \in \cM$ as $R$ goes to infinity. 

Let us assume that, for any $R>0$, there exists a matrix $A^R_3\in \cM$ satisfying~\eqref{eq:def3}. Then, 
$$
A^R_3 \mathop{\longrightarrow}_{R\to +\infty} A^\star. 
$$
\end{proposition}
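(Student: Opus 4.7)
The plan is a compactness-and-identification argument that (i) uses G-convergence to pass to the limit in the self-consistent relation $A^R_3 = G^{\cA^R}(A^R_3)$, and (ii) identifies the resulting limiting fixed-point equation as being uniquely satisfied by $A^\star$. Since $\cM$ is a compact subset of $\R^{d \times d}$, from any subsequence of $(A^R_3)_{R>0}$ one can extract a further subsequence $(A^{R_n}_3)_n$ converging to some $A^\infty \in \cM$, and it suffices to show that every such accumulation point equals $A^\star$.

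For the limit passage, I would apply Lemma~\ref{lem:lem1} with the given family $\cA^R$ and $A^R := A^R_3 \to A^\infty$: for each $p \in \R^d$ this yields $w_p^{\cA^{R_n}, A^{R_n}_3} \rightharpoonup w_p^{A^\star, A^\infty}$ in $H^1_{\rm loc}(\R^d)$ together with weak $L^2_{\rm loc}$ convergence of the associated flux. Substituting $v = w_p^{\cA, A}$ into the variational identity~\eqref{eq:FV} and using it to simplify~\eqref{eq:optim2} produces the compact representation
\[
\cJ_p^{\cA}(A) \;=\; \frac{1}{|B|} \int_B p^T \cA \left( p + \nabla w_p^{\cA, A} \right) \;-\; \frac{1}{|B|} \int_{\Gamma} (A p \cdot n) \, w_p^{\cA, A}.
\]
Along the subsequence, the first term converges by weak $L^2(B)$ convergence of the flux tested against the constant vector $p$, while the second converges by strong $L^2(\Gamma)$ convergence of traces (through the compact embedding $H^{1/2}(\Gamma) \hookrightarrow L^2(\Gamma)$) together with $A^{R_n}_3 \to A^\infty$. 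Combining these limits with $p^T A^{R_n}_3 p = \cJ_p^{\cA^{R_n}}(A^{R_n}_3)$ yields $p^T A^\infty p = \cJ_p^{A^\star}(A^\infty)$ for every $p \in \R^d$, i.e.\ $A^\infty = G^{A^\star}(A^\infty)$.

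The crux is then to show that the only $A \in \cM$ satisfying $A = G^{A^\star}(A)$ is $A = A^\star$. That $A^\star$ is a fixed point is immediate: when $A = A^\star$ the coefficient $\ccA^{A^\star, A^\star}$ is globally constant, which forces $w_p^{A^\star, A^\star} = 0$ and $\cJ_p^{A^\star}(A^\star) = p^T A^\star p$. Next, using $v = w_p^{A^\star, A}$ in the variational formulation gives the energy identity
\[
p^T A^\star p - \cJ_p^{A^\star}(A) \;=\; \frac{1}{|B|} \int_B \bigl( \nabla w_p^{A^\star, A} \bigr)^T A^\star \nabla w_p^{A^\star, A} + \frac{1}{|B|} \int_{\R^d \setminus B} \bigl( \nabla w_p^{A^\star, A} \bigr)^T A \, \nabla w_p^{A^\star, A} \;\geq\; 0,
\]
so every fixed point $A$ obeys $A \leq A^\star$ in the positive semidefinite sense. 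Upgrading this to $A = A^\star$ is the main obstacle. I would exploit the spherical symmetry of $B$: in the isotropic case $A^\star = a^\star I$, $A = a I$, the ansatz $w_p(x) = c_1\, p \cdot x$ in $B$ and $w_p(x) = c_2\, (p \cdot x)/|x|^d$ outside $B$ solves the transmission problem explicitly, and after enforcing continuity and flux continuity on $\Gamma$ leads to the closed form $G^{a^\star I}(a I) = \dfrac{a\, \bigl((d+1) a^\star - a\bigr)}{a^\star + (d-1)\, a} \, I$, whose only fixed point in the admissible range $a \geq \alpha > 0$ is $a = a^\star$. In the general anisotropic setting, a similar (but more intricate) Eshelby-type reduction, or alternatively the strict concavity of $A \mapsto \mathrm{Tr}\bigl(G^{A^\star}(A)\bigr)$ afforded by Lemma~\ref{lem:lemconcavity} for $d \leq 3$ combined with the characterization of $A^\star$ as its unique maximizer on $\cM$, is what I would invoke to seal the uniqueness. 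Once this is in hand, every subsequential limit of $(A^R_3)$ equals $A^\star$, so the entire sequence converges and $A^R_3 \to A^\star$ as $R \to \infty$.
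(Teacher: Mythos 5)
Your compactness-and-identification scaffold follows the paper precisely: extract a convergent subsequence $A^{R_n}_3 \to A^\infty \in \cM$, apply Lemma~\ref{lem:lem1} to get weak convergence of $w_p^{\cA^{R_n}, A^{R_n}_3}$ and its flux, pass to the limit in the fixed-point relation using representation~\eqref{eq:expJ2} and strong $L^2(\Gamma)$ trace convergence, and then deduce from~\eqref{eq:rel1} that any accumulation point $A^\infty$ satisfies $A^\infty = G^{A^\star}(A^\infty)$ and $A^\infty \leq A^\star$. Up to this point your argument matches the paper's.

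The gap is in the final uniqueness step, and it is genuine. Your fallback "strict concavity of $A \mapsto \operatorname{Tr}\bigl(G^{A^\star}(A)\bigr)$ plus the fact that $A^\star$ is its unique maximizer" does not by itself force a fixed point to coincide with $A^\star$. If $A$ is a fixed point then $\operatorname{Tr}\bigl(G^{A^\star}(A)\bigr) = \operatorname{Tr}(A)$, and concavity along the segment $A_t = tA^\star + (1-t)A$ only yields that $g(t) := \operatorname{Tr}\bigl(G^{A^\star}(A_t)\bigr) - \operatorname{Tr}(A_t)$ is a nonnegative concave function vanishing at $t=0$ and $t=1$; that is entirely consistent with $A \neq A^\star$, so no contradiction is obtained. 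The explicit Eshelby computation you give does settle the isotropic case $A^\star = a^\star I_d$ (your closed form is correct, and the only fixed point is $a = a^\star$), but the general anisotropic case is left as a hand wave. What the paper actually does at this point is sharper and works in full generality: starting from the identity that gives $A^\infty \leq A^\star$, it combines it with the variational formulation tested against $w := w_p^{A^\star, A^\infty}$ to obtain
\[
0 \;\geq\; \int_B (p+\nabla w)^T A^\infty (p+\nabla w) - 2\int_\Gamma (A^\infty p\cdot n)\, w + \int_{\R^d\setminus B} (\nabla w)^T A^\infty \nabla w - |B|\, p^T A^\infty p \;=\; 2\,\cI(w),
\]
where $\cI$ is the energy functional on $V_0$ associated with the constant coefficient $A^\infty$, whose unique minimizer is $v_0 = 0$ with $\cI(0) = 0$. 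Hence $w = 0$, and then Lemma~\ref{lem:tech} applied to $-\div\bigl((A^\star\chi_B + A^\infty(1-\chi_B))p\bigr) = 0$ gives $A^\infty = A^\star$. This is the step you are missing, and it cannot be replaced by the concavity observation; you would need to supply this (or an equivalent) argument to make the proof complete beyond the isotropic case.
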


\begin{remark}
It is sufficient to assume that there exists a sequence $R_n$ converging to $\infty$ and such that, for any $n \in \N^\star$, there exists a matrix $A^{R_n}_3\in \cM$ satisfying~\eqref{eq:def3}. Then $\dps \lim_{n \to \infty} A^{R_n}_3 = A^\star$. 
\end{remark}
    
In general, we are not able to prove the existence of a matrix $A_3^R$ satisfying~\eqref{eq:def3}. However, the following weaker existence result holds in the case of an isotropic homogenized medium. Its proof is postponed until Section~\ref{sec:proofprop3}.

\begin{proposition}
\label{prop:prop3}
Let $(\cA^R)_{R>0} \subset L^\infty(B, \cM)$ be a family of matrix-valued fields which $G$-converges in $B$ to a constant matrix $A^\star \in \cM$ as $R$ goes to infinity. In addition, assume that $A^\star = a^\star I_d$, where $I_d$ is the identity matrix of $\RR^{d \times d}$.

Then, for any $R>0$, there exists a positive number $a^R_3 \in [\alpha,\beta]$ (which is unique at least in the case when $d \leq 3$) such that
\begin{equation}
\label{eq:spher}
a^R_3 = \frac{1}{d}\mbox{\rm Tr}\left( G^{\cA^R} \left( a^R_3 \, I_d \right) \right).
\end{equation}
In addition, 
\begin{equation}
\label{eq:spher2}
a^R_3 \mathop{\longrightarrow}_{R\to +\infty} a^\star. 
\end{equation}
\end{proposition}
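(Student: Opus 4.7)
I realize $a_3^R$ as a fixed point in $[\alpha,\beta]$ of the scalar map $\Phi^R(a):=\tfrac{1}{d}\,\mathrm{Tr}\bigl(G^{\cA^R}(aI_d)\bigr)$. Existence will follow from the intermediate value theorem applied to $\Phi^R-\mathrm{id}$, using: (i) continuity of $\Phi^R$, which is a consequence of the $\Gamma$-convergence of the quadratic energy $J_p^{\cA^R,A}$ in the parameter $A$---itself a direct specialization of Lemma~\ref{lem:lem1} with the inner coefficient frozen; (ii) the upper bound $\Phi^R(\beta)\leq\beta$, obtained by using $v=0$ as competitor in~\eqref{eq:optim2}, which gives $\cJ_p^{\cA^R}(\beta I_d)\leq\tfrac{1}{|B|}\int_B p^T\cA^R p\leq\beta|p|^2$; (iii) the lower bound $\Phi^R(\alpha)\geq\alpha$, obtained from the pointwise estimate $J_p^{\cA^R,\alpha I_d}(v)\geq J_p^{\alpha I_d,\alpha I_d}(v)$ (since $\cA^R\geq\alpha I_d$ inside $B$ and the exterior coefficients coincide), whose infimum over $V_0$ is exactly $\alpha|p|^2$ (homogeneous problem, attained at $v=0$). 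For uniqueness when $d\leq3$, Lemma~\ref{lem:lemconcavity} provides strict concavity of $A\mapsto\mathrm{Tr}\,G^{\cA^R}(A)$, which restricts to strict concavity of $\Phi^R$ on $[\alpha,\beta]$; since $\Phi^R(\alpha)>\alpha$ outside the degenerate case $\cA^R\equiv\alpha I_d$, a standard tangent-line argument for strictly concave functions rules out a second zero of $\Phi^R-\mathrm{id}$.

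\textbf{Plan for convergence.} The sequence $(a_3^R)\subset[\alpha,\beta]$ is precompact; let $a^\infty$ be any accumulation value, attained along $R_n\to\infty$. Applying Lemma~\ref{lem:lem1} with $(\cA^R,A^R)=(\cA^{R_n},a_3^{R_n}I_d)$ yields weak $H^1_{\mathrm{loc}}$ convergence of the correctors and weak $L^2_{\mathrm{loc}}$ convergence of the fluxes to those of the limit embedded problem with interior $A^\star=a^\star I_d$ and exterior $a^\infty I_d$. A $\Gamma$-convergence/div-curl argument---the quadratic energy combined with the divergence-free flux and curl-free gradient---lets me pass to the limit in $\cJ_p^{\cA^{R_n}}(a_3^{R_n}I_d)\to\cJ_p^{A^\star}(a^\infty I_d)$; taking $p=e_i$, summing, and dividing by $d$ turns the fixed-point identity into $a^\infty=\Phi^\star(a^\infty)$, where $\Phi^\star(a):=\tfrac{1}{d}\,\mathrm{Tr}\,G^{A^\star}(aI_d)$. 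To identify $a^\infty$, I exploit the full isotropy of the limit problem: by the spherical symmetry of $B$ and the piecewise-isotropic coefficients, the embedded corrector admits the explicit dipole form $w_p(x)=\alpha\,p\cdot x$ for $|x|\leq 1$ and $w_p(x)=\alpha(p\cdot x)/|x|^d$ for $|x|>1$, with $\alpha=(a-a^\star)/(a^\star+(d-1)a)$ fixed by normal flux continuity across $\Gamma$. Plugging into~\eqref{eq:optim2} and computing the angular integrals yields
\[
\Phi^\star(a) \;=\; a^\star \;-\; \frac{(a-a^\star)^2}{a^\star+(d-1)a},
\]
so the equation $\Phi^\star(a)=a$ reduces to $d\,a\,(a-a^\star)=0$, whose only positive root is $a^\star$. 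Since $a^\infty\geq\alpha>0$, necessarily $a^\infty=a^\star$; as every accumulation point of $(a_3^R)$ equals $a^\star$, the full sequence converges.

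\textbf{Main obstacle.} The most delicate step is the passage to the limit in the quadratic energy along weakly convergent correctors, which rests on the div-curl compensation encoded in Lemma~\ref{lem:lem1}; without it, $\cJ_p^{A^\star}(a^\infty I_d)$ cannot be identified as the limit. The explicit dipole computation is the other essential ingredient, as it pins down $a^\star$ as the unique positive fixed point of the limit map $\Phi^\star$ and thereby converts the abstract compactness argument into actual convergence of $a_3^R$ to $a^\star$.
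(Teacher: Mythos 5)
Your proof is correct and reaches the same conclusions by a genuinely different route, even though it leans on the same core ingredients (Lemma~\ref{lem:lem1}, Lemma~\ref{lem:lemconcavity}, and the Eshelby dipole solution). For the existence step, you compare $\cJ_p^{\cA^R}(\alpha I_d)$ and $\cJ_p^{\cA^R}(\beta I_d)$ directly against the homogeneous values $\alpha|p|^2$ and $\beta|p|^2$ (via $v=0$ as a competitor and the pointwise monotonicity of the energy density in the interior coefficient); this isolates exactly the two endpoint values needed for the intermediate value theorem. The paper reaches the same endpoint inequalities through the full explicit formulas $f_\alpha(\gamma) = (\alpha-\gamma)\,d\gamma/((d-1)\gamma+\alpha)$ and its analogue $f_\beta$, which is more information than the argument actually consumes, so your shortcut is a real simplification. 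For the convergence step, the roles reverse: you invoke the Eshelby solution for the limit isotropic problem, compute
$\Phi^\star(a)=a^\star-(a-a^\star)^2/\bigl(a^\star+(d-1)a\bigr)$,
and solve $\Phi^\star(a)=a$ algebraically to read off $a^\star$ as the unique positive root; the paper instead follows the abstract variational mechanism of Proposition~\ref{prop:prop2} (first showing $a_3^\infty\leq a^\star$ via~\eqref{eq:rel1}, then the one-sided energy comparison and $\cI(v)\geq 0$ to force the corrector to vanish). Both are valid; yours is a tidy computation that exploits the isotropy hypothesis head-on, while the paper's route is more structural and reuses the machinery already built for Proposition~\ref{prop:prop2}.

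Two small remarks. First, your note on the degenerate case $\cA^R\equiv\alpha I_d$ in the uniqueness argument is a genuine refinement: strict concavity of $\Phi^R$ on $[\alpha,\beta]$ allows two zeros if $\Phi^R(\alpha)=\alpha$, and this only happens in that degenerate case (which is handled separately by the homogeneous analysis). The paper's one-line appeal to strict concavity glosses over this. Second, both you and the paper implicitly rely on continuity of $A\mapsto\cJ^{\cA^R}_p(A)$ (or of $\gamma\mapsto f_{\cA^R}(\gamma)$) to apply the intermediate value theorem; your appeal to Lemma~\ref{lem:lem1} with the interior coefficient frozen, combined with the representation~\eqref{eq:expJ2} and compactness of the trace, supplies this, so it is not a gap, though it would be worth making explicit.
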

Again, we did not investigate whether the solution to~\eqref{eq:spher} is unique in dimension $d \geq 4$.

\medskip

Note that, since $A^\star = a^\star I_d \in \cM$, we have that $a^\star \in [\alpha,\beta]$. Note also that~\eqref{eq:spher} is weaker than~\eqref{eq:def3}, which would read $a^R_3 \, I_d = G^{\cA^R}(a^R_3 \, I_d)$. However, this weaker result is sufficient to prove that $a^R_3$ is a converging approximation of $a^\star$.

\begin{remark}
In the mechanics literature, other types of approximations have been proposed, based on the analytical solution of the so-called Eshelby problem~\cite{Eshelby}. We refer the reader to the Appendix of~\cite{Thomines} for a pedagogical mathematical introduction to the main methods (including those presented in~\cite{Benveniste,Christensen,Hill,MoriTanaka}) that were derived from the works of Eshelby. For the sake of brevity, we do not detail them here.
\end{remark}
 
\section{Proofs of consistency}\label{sec:proofs}

We collect in this section the proofs of the above propositions. We begin by proving some technical lemmas useful in our analysis. 

\subsection{Preliminary lemmas}

We first recall two classical functional analysis results on the space $V_0$ defined by~\eqref{eq:def_V0}. The first result can be proved using a standard contradiction argument.

\begin{lemma}[Poincar\'e-Wirtinger inequality in $V_0$]\label{lem:Poincare}
For all $r>0$, there exists $K_r>0$ such that
\begin{equation}
\label{eq:Poincare}
\forall v \in V_0, \quad \left\| v \right\|_{L^2(B_r)} \leq K_r \left\| \nabla v \right\|_{L^2(B_r)},
\end{equation}
where $B_r := B(0,r)$ is the open ball of $\R^d$ of radius $r$ and centered at the origin.
\end{lemma}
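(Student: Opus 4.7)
The plan is to proceed by contradiction, using Rellich--Kondrachov compactness on $B_r$, and to extract a contradiction from the mean-zero constraint $\int_B v = 0$ that \emph{defines} $V_0$ (as opposed to a mean-zero constraint on $B_r$ itself, which is not assumed).

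Fix $r > 0$ and suppose the inequality~\eqref{eq:Poincare} fails. Then one can extract a sequence $(v_n)_{n\in\N} \subset V_0$ such that $\|v_n\|_{L^2(B_r)} = 1$ for every $n$ and $\|\nabla v_n\|_{L^2(B_r)} \to 0$. The sequence is thus bounded in $H^1(B_r)$, and since $B_r$ is a smooth bounded domain, the Rellich--Kondrachov theorem furnishes a (non-relabeled) subsequence that converges strongly in $L^2(B_r)$ and weakly in $H^1(B_r)$ to some $v^\star \in H^1(B_r)$. Weak $L^2(B_r)$ convergence of the gradients, combined with $\|\nabla v_n\|_{L^2(B_r)} \to 0$, forces $\nabla v^\star = 0$ almost everywhere on the connected open set $B_r$, so $v^\star$ equals a constant $c$ on $B_r$. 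Strong $L^2(B_r)$ convergence then pins down $|c|\,|B_r|^{1/2} = 1$, in particular $c \neq 0$.

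To reach a contradiction I would invoke the mean-zero constraint $\int_B v_n = 0$ inherited from $V_0$. In the case $r \geq 1$ we have $B \subset B_r$, so the strong $L^2(B_r)$ convergence passes directly to the integral over $B$ and gives
\[
0 \;=\; \int_B v_n \;\xrightarrow[n\to\infty]{}\; \int_B v^\star \;=\; c\,|B|,
\]
which forces $c = 0$ and contradicts $|c| = |B_r|^{-1/2}$. This closes the argument for every $r \geq 1$, and in fact the resulting constant $K_r$ can be chosen continuous in $r$.

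\textbf{Main obstacle.} The delicate regime is $0 < r < 1$, where $B$ strictly contains $B_r$ and the limit $v^\star$ constructed above lives only on the smaller ball, so the mean-zero test on $B$ cannot be passed to the limit without further information. My plan here would be to first record the case $r = 1$ just established (which gives $\|v\|_{L^2(B)} \leq K_1 \|\nabla v\|_{L^2(B)}$ for all $v \in V_0$), combine it with the trivial monotonicity $\|v\|_{L^2(B_r)} \leq \|v\|_{L^2(B)}$, and then carry out a separate argument controlling $\|\nabla v\|_{L^2(B \setminus B_r)}$ by $\|\nabla v\|_{L^2(B_r)}$ for functions of $V_0$, exploiting the global regularity $\nabla v \in L^2(\R^d)$ together with the mean-zero anchor on $B$. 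Transferring the zero-mean constraint from $B$ to the strictly smaller ball $B_r$ is, I expect, the technical heart of the proof for $r$ small.
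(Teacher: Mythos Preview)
Your contradiction argument for $r \geq 1$ is correct and is precisely the ``standard contradiction argument'' the paper alludes to (the paper does not spell out a proof). So for that range you are done, and your write-up matches the intended approach.

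The difficulty you flag for $0 < r < 1$ is not merely technical: the inequality as stated is \emph{false} in that range, so no argument can close it. Here is a counterexample. Fix $r \in (0,1)$. Choose $\phi \in \cD(\R^d)$ with $\phi \equiv 1$ on $B_r$, and $\psi \in \cD(\R^d)$ supported in $B \setminus \overline{B_r}$ with $\int_B \psi \neq 0$. Set
\[
v \;:=\; \phi \;-\; \frac{\int_B \phi}{\int_B \psi}\,\psi.
\]
Then $v$ is smooth and compactly supported, hence $v \in V$, and $\int_B v = 0$, so $v \in V_0$. On $B_r$ we have $v \equiv 1$, whence $\|v\|_{L^2(B_r)} = |B_r|^{1/2} > 0$ while $\|\nabla v\|_{L^2(B_r)} = 0$. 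Thus no constant $K_r$ can make~\eqref{eq:Poincare} hold. In particular, your proposed route of controlling $\|\nabla v\|_{L^2(B\setminus B_r)}$ by $\|\nabla v\|_{L^2(B_r)}$ cannot succeed.

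This does not damage the paper: the only uses of Lemma~\ref{lem:Poincare} are for $r=1$ (in Lemma~\ref{lem:trace}) and for arbitrarily large $r$ (in Section~\ref{sec:prooflem1}, to obtain $H^1_{\rm loc}$ bounds), and for $H^1_{\rm loc}(\R^d)$ it suffices to have the estimate for all $r \geq 1$. The clean fix is to restrict the statement to $r \geq 1$, for which your proof is complete.
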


The next lemma is a straightforward consequence of the continuity of the trace application from $H^1(B)$ to $L^2(\Gamma)$ and of inequality~\eqref{eq:Poincare} for $r=1$.

\begin{lemma}
\label{lem:trace}
There exists $L>0$ such that
\begin{equation}
\label{eq:trace}
\forall v \in V_0, \quad \left\| v \right\|_{L^2(\Gamma)} \leq L \left\| \nabla v \right\|_{L^2(B)}. 
\end{equation}
\end{lemma}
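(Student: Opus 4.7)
The plan is to combine the standard trace inequality on $B$ with the Poincar\'e--Wirtinger inequality of Lemma~\ref{lem:Poincare} applied at radius $r=1$. Since $B$ is the unit ball, it is a bounded Lipschitz domain, so the trace operator from $H^1(B)$ to $L^2(\Gamma)$ is continuous: there exists a constant $C>0$ such that for every $v \in H^1(B)$,
$$
\|v\|_{L^2(\Gamma)} \le C \left( \|v\|_{L^2(B)}^2 + \|\nabla v\|_{L^2(B)}^2 \right)^{1/2}.
$$

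Given $v \in V_0$, we first observe that $v|_B \in H^1(B)$ because $v \in L^2_{\rm loc}(\R^d)$ and $\nabla v \in (L^2(\R^d))^d$, so in particular $v \in L^2(B)$ and $\nabla v \in (L^2(B))^d$. Next, since $v \in V_0$ satisfies $\int_B v = 0$, Lemma~\ref{lem:Poincare} applied with $r=1$ (noting $B_1 = B$) gives $\|v\|_{L^2(B)} \le K_1 \|\nabla v\|_{L^2(B)}$. Injecting this into the trace bound yields
$$
\|v\|_{L^2(\Gamma)} \le C \sqrt{1+K_1^2} \, \|\nabla v\|_{L^2(B)},
$$
so the claim holds with $L := C \sqrt{1+K_1^2}$.

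There is really no obstacle here: the result is a one-line composition of two standard facts, both of which are either quoted (trace continuity) or explicitly available (Lemma~\ref{lem:Poincare}). The only point requiring minimal care is to verify that $V_0$ functions restrict to $H^1(B)$, which is immediate from the definition~\eqref{eq:def_V0} of $V$ and $V_0$.
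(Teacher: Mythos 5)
Your proof is correct and matches the paper's argument exactly: the paper also states that the lemma follows by combining the continuity of the trace map from $H^1(B)$ to $L^2(\Gamma)$ with the Poincar\'e--Wirtinger inequality~\eqref{eq:Poincare} at $r=1$.
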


We next recall a classical homogenization result (see e.g.~\cite[Theorem 5.2 p.~151]{Jikov}), which plays a central role in our analysis:

\begin{theorem}
\label{th:th1}
Let $O \subset \R^d$ be an open subset of $\R^d$ and $D$ and $D_1$ two subdomains of $O$ with $D_1\subset D \subset O$. Consider a sequence $(\cA^R)_{R>0} \subset L^\infty(O, \cM)$ and assume that it $G$-converges as $R$ goes to infinity to a matrix-valued function $\cA^\star \in L^\infty(D, \cM)$ in the domain $D$. Besides, let $p\in \R^d$ and let $(w_p^R)_{R>0} \subset H^1(D_1)$ be a sequence of functions which weakly converges (in $H^1(D_1)$) to some $w_p^\infty \in H^1(D_1)$. We assume that
$$
\forall R>0, \quad -\mbox{\div}\left( \cA^R \left( p +\nabla w_p^R \right) \right) = 0 \mbox{ in $\cD'(D_1)$}.
$$   
Then, 
$$
\cA^R \left(p + \nabla w_p^R \right) \wlim \cA^\star \left(p + \nabla w_p^\infty \right) \mbox{ weakly in $L^2(D_1)$}
$$
and $w_p^\infty$ satisfies
$$
-\mbox{\div}\left( \cA^\star \left(p + \nabla w_p^\infty \right) \right) = 0 \mbox{ in $\cD'(D_1)$}. 
$$
\end{theorem}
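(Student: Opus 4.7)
My plan is to use Tartar's method of oscillating test functions together with the Murat--Tartar div-curl lemma (compensated compactness). The strategy is to build a sequence of dual test functions from the $G$-convergence of $\cA^R$, then identify the weak limit of $\sigma^R := \cA^R(p + \nabla w_p^R)$ by pairing the div-free $\sigma^R$ against curl-free gradients. First I would establish uniform $L^2(D_1)$ bounds on $\sigma^R$ (from $|\cA^R_{ij}| \leq \beta$ and the $H^1$-boundedness of $w_p^R$), and extract a weak limit $\sigma^\infty \in L^2(D_1)^d$. Passing to the limit in $\div \sigma^R = 0$ immediately yields $\div \sigma^\infty = 0$ in $\cD'(D_1)$; what remains is the identification $\sigma^\infty = \cA^\star (p + \nabla w_p^\infty)$.

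For the identification, I would fix an arbitrary $q \in \R^d$ and a smooth open set $D'$ with $D_1 \subset\subset D' \subset\subset D$, and construct dual oscillating test functions $v_q^R \in H^1(D')$ with the properties
$$
-\div\bigl(\cA^R \nabla v_q^R\bigr) = -\div\bigl(\cA^\star q\bigr) \mbox{ in $\cD'(D')$}, \qquad v_q^R \wlim q \cdot x \mbox{ weakly in $H^1(D')$}, \qquad \cA^R \nabla v_q^R \wlim \cA^\star q \mbox{ weakly in $L^2(D')^d$}.
$$
Applying the div-curl lemma to $(\sigma^R, \nabla v_q^R)$ (the first divergence-free, the second a gradient, both $L^2$-bounded) gives $\sigma^R \cdot \nabla v_q^R \to \sigma^\infty \cdot q$ in $\cD'(D_1)$. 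Applying it instead to $(\cA^R \nabla v_q^R, \, p + \nabla w_p^R)$ (the first with divergence equal to the fixed distribution $\div(\cA^\star q)$, hence compact in $H^{-1}_{\rm loc}$, and the second curl-free) gives $\cA^R \nabla v_q^R \cdot (p + \nabla w_p^R) \to \cA^\star q \cdot (p + \nabla w_p^\infty)$ in $\cD'(D_1)$. Since $\cA^R$ is symmetric, the two quantities on the left coincide pointwise, so their limits must agree: $\sigma^\infty \cdot q = q \cdot \cA^\star (p + \nabla w_p^\infty)$ in $\cD'(D_1)$ for every $q \in \R^d$, which is exactly $\sigma^\infty = \cA^\star(p + \nabla w_p^\infty)$. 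Uniqueness of this limit upgrades subsequential convergence to convergence of the whole family.

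The main obstacle is the existence of the dual oscillating test functions $v_q^R$ with the three listed properties. They are not produced by Definition~\ref{def:Gconv} alone, because the natural Dirichlet lift $\widetilde{v}_q^R := v_q^R - q\cdot x \in H^1_0(D')$ satisfies an equation with an $R$-dependent right-hand side $\div\bigl((\cA^R - \cA^\star) q\bigr)$ which need not be compact in $H^{-1}(D')$. Their construction is in fact a central theorem of $G$-convergence / $H$-convergence theory (going back to Murat--Tartar and Spagnolo), established by a compactness-uniqueness argument on the class of coercive symmetric matrix fields and exploiting the localization property of $G$-convergence from $D$ to any $D' \subset\subset D$. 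Once this standard machinery is granted, the div-curl identification above closes the proof.
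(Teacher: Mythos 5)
The paper does not prove Theorem~\ref{th:th1}: it is quoted as a classical fact of $G$-convergence theory and cited to~\cite[Theorem~5.2, p.~151]{Jikov}, so there is no internal proof to compare against and your attempt must be judged on its own merits. Your outline is correct and is essentially the standard Tartar argument: uniform $L^2(D_1)$ bounds on $\sigma^R := \cA^R\bigl(p + \nabla w_p^R\bigr)$, extraction of a weak limit $\sigma^\infty$, the immediate passage to the limit in $\mbox{\rm div}\,\sigma^R = 0$, and identification of $\sigma^\infty$ with $\cA^\star\bigl(p + \nabla w_p^\infty\bigr)$ via two applications of the div-curl lemma against oscillating test functions $v_q^R$, exploiting the symmetry of $\cA^R$ to match the two pointwise-equal products.

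Two remarks on the part you flagged as delicate. First, you present the existence of $v_q^R$ as requiring a separate ``central theorem \dots established by a compactness-uniqueness argument'' on top of Definition~\ref{def:Gconv}. In fact it follows directly from that definition via a cutoff on the larger domain $D$: pick $\varphi \in C_c^\infty(D)$ with $\varphi \equiv 1$ on a neighbourhood of $\overline{D'}$, set $f := -\mbox{\rm div}\bigl(\cA^\star \nabla\bigl(\varphi\,(q\cdot x)\bigr)\bigr) \in H^{-1}(D)$, let $v^R \in H^1_0(D)$ solve $-\mbox{\rm div}\bigl(\cA^R \nabla v^R\bigr) = f$, and restrict to $D'$. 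Definition~\ref{def:Gconv} gives $v^R \wlim \varphi\,(q\cdot x)$ in $H^1_0(D)$ and $\cA^R\nabla v^R \wlim \cA^\star \nabla\bigl(\varphi\,(q\cdot x)\bigr)$ in $L^2(D)$, and on $D'$ these become $q\cdot x$ and $\cA^\star q$, while the equation on $D'$ reads $-\mbox{\rm div}\bigl(\cA^R\nabla v^R\bigr) = -\mbox{\rm div}\bigl(\cA^\star q\bigr)$. Your diagnosis that the naive Dirichlet lift posed directly on $D'$ fails (because the right-hand side $\mbox{\rm div}\bigl((\cA^R-\cA^\star)q\bigr)$ is $R$-dependent and not $H^{-1}$-compact) is right, but the remedy is to pose the auxiliary problem on $D$ itself, not to invoke additional structure theorems. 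Second, your choice of $D'$ with $D_1 \subset\subset D' \subset\subset D$ tacitly assumes $D_1 \subset\subset D$, which the statement does not require (and indeed the paper applies the theorem with $D_1 = D = B$). This is a harmless gap: the identification is local, so one carries out the argument on an exhaustion of $D$ by relatively compact subdomains $D'' \subset\subset D$, and the uniform $L^2(D_1)$ bound on $\sigma^R$ together with uniqueness of the local limits upgrades the conclusion to weak convergence on all of $D_1$.
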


\medskip

Lastly, the following technical result will be useful in the proofs below:

\begin{lemma}
\label{lem:tech}
Let $(p_i)_{1\leq i \leq d}$ be a basis of $\R^d$. Let $A_1$ and $A_2$ be two constant matrices in $\cM$ such that, for any $1\leq i \leq d$, we have
$$
-\mbox{\div}\Big( \big(A_1 \chi_B + A_2 (1 - \chi_B) \big) p_i \Big) = 0 \mbox{ in $\cD'(\R^d)$}.
$$
Then $A_1 = A_2$.
\end{lemma}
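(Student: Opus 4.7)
The idea is to interpret the distributional divergence as a surface measure supported on $\Gamma = \partial B$, and then exploit the fact that the outward normal $n(x) = x$ on the unit sphere spans $\R^d$.

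\smallskip

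First, I would fix an index $i \in \{1,\dots,d\}$ and test the equation against an arbitrary $\varphi \in \cD(\R^d)$. Since $A_1 p_i$ and $A_2 p_i$ are constant vectors, applying the divergence theorem separately on $B$ and on any large ball $B_r$ containing the support of $\varphi$ (so that $\varphi$ vanishes on $\partial B_r$) gives
\begin{align*}
\langle -\mbox{div}\bigl((A_1\chi_B + A_2(1-\chi_B))\,p_i\bigr), \varphi \rangle
&= \int_B A_1 p_i \cdot \nabla \varphi + \int_{\R^d \setminus B} A_2 p_i \cdot \nabla \varphi \\
&= \int_\Gamma (A_1 p_i \cdot n)\,\varphi - \int_\Gamma (A_2 p_i \cdot n)\,\varphi \\
&= \int_\Gamma \bigl((A_1 - A_2) p_i \cdot n\bigr)\,\varphi,
\end{align*}
where $n$ denotes the outward unit normal on $\Gamma$ (pointing out of $B$). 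The hypothesis of the lemma forces this quantity to vanish for every $\varphi \in \cD(\R^d)$.

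\smallskip

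Next, since the trace map $\cD(\R^d) \to C(\Gamma)$ has dense image (every continuous function on $\Gamma$ is the restriction of some smooth compactly supported function on $\R^d$), the vanishing of $\int_\Gamma ((A_1-A_2)p_i \cdot n)\,\varphi$ for all such $\varphi$ implies that the continuous function $x \mapsto (A_1 - A_2)p_i \cdot n(x)$ vanishes identically on $\Gamma$. Because $B$ is the unit ball, $n(x) = x$ for $x \in \Gamma$, so setting $v_i := (A_1 - A_2)p_i$ we have $v_i \cdot x = 0$ for every $x$ on the unit sphere. Since the unit sphere spans $\R^d$, this yields $v_i = 0$, i.e.\ $(A_1-A_2)p_i = 0$.

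\smallskip

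Finally, this identity holds for each $i \in \{1,\dots,d\}$, and $(p_i)_{1 \leq i \leq d}$ is a basis of $\R^d$, so the linear map $A_1 - A_2$ vanishes on a basis and therefore $A_1 - A_2 = 0$, proving the lemma.

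\smallskip

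\textbf{Main obstacle.} There is no serious obstacle; the only subtlety is being careful with the orientation of the normal on $\Gamma$ when splitting the integration by parts between the bounded domain $B$ and its unbounded complement (the outward normal with respect to $\R^d \setminus B$ is $-n$, which produces the crucial minus sign in front of the $A_2$-contribution). Once this is handled correctly, the rest is a direct density argument followed by the elementary linear algebra step.
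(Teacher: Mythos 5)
Your proof is correct and follows the same route as the paper's: test against arbitrary $\varphi \in \cD(\R^d)$, integrate by parts on $B$ and its complement to reduce to the surface integral $\int_\Gamma ((A_1-A_2)p_i \cdot n)\,\varphi = 0$, deduce $(A_1-A_2)p_i = 0$, and conclude via the basis hypothesis. The extra remarks about density of traces and $n(x)=x$ on the unit sphere just make explicit steps the paper leaves implicit.
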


\begin{proof}
For any $\varphi \in \cD(\R^d)$ and any $1\leq i \leq d$, we have
\begin{eqnarray*}
0
&=&
\int_{\RR^d} (\nabla \varphi)^T \left(A_1 \chi_B + A_2 \chi_{\R^d \setminus B} \right) p_i
\\
&=&
\int_B (\nabla \varphi)^T A_1 \, p_i + \int_{\R^d \setminus B} (\nabla \varphi)^T A_2 \, p_i
\\
&=&
\int_\Gamma (A_1 p_i \cdot n) \varphi - \int_\Gamma (A_2 p_i \cdot n) \varphi.
\end{eqnarray*}
Since $\varphi$ is arbitrary, this implies that
$$
\big( (A_1 -A_2) p_i \big) \cdot n(x) = 0 \ \text{on $\Gamma$}.
$$
Hence $(A_1 -A_2) p_i = 0$ for any $1\leq i \leq d$. Since $(p_i)_{1\leq i \leq d}$ is a basis of $\R^d$, we get $A_1 = A_2$. 
\end{proof}

\subsection{Equivalent definitions of $w^{\cA,A}_p$}
\label{sec:equivalent}

We collect here some equivalent definitions of the solution $w_p^{\cA, A}$ to~\eqref{eq:pbbase}. As pointed out above (see~\eqref{eq:FV}), the variational formulation of~\eqref{eq:pbbase} is
\begin{equation}
\label{eq:FVgen}
\forall v \in V_0, \quad \int_B (\nabla v)^T \cA \left(p + \nabla w^{\cA, A}_p \right) + \int_{\R^d \setminus B} \left(\nabla v\right)^T A \nabla w^{\cA, A}_p - \int_{\Gamma} (A p\cdot n) \, v = 0.
\end{equation}
Taking $v = w_p^{\cA, A}$ as a test function in~\eqref{eq:FVgen}, we obtain the following useful relation:
\begin{equation}
\label{eq:rel1}
\int_B \left( \nabla w_p^{\cA, A}\right)^T \cA \nabla w_p^{\cA, A} + \int_{\R^d \setminus B} \left(\nabla w_p^{\cA, A}\right)^T A \nabla w_p^{\cA, A}
=
- \int_B p^T \cA \nabla w_p^{\cA, A} + \int_{\Gamma} (Ap \cdot n) \, w_p^{\cA, A}.
\end{equation}
We recall, as announced in Section~\ref{sec:embedded}, that $w_p^{\cA, A}$ is equivalently the unique solution to the optimization problem~\eqref{eq:optim1}--\eqref{eq:optim2}. We then infer from~\eqref{eq:def_curly_J} and~\eqref{eq:rel1} that
\begin{equation}
\label{eq:expJ3}
\cJ_p^{\cA}(A) = \frac{1}{|B|} \int_B p^T \cA p - \frac{1}{|B|} \int_B \left( \nabla w^{\cA, A}_p\right)^T \cA \nabla w^{\cA, A}_p - \frac{1}{|B|} \int_{\R^d\setminus B} \left( \nabla w^{\cA, A}_p\right)^T A \nabla w^{\cA, A}_p. 
\end{equation}
Equivalently, we also have that
\begin{equation}
\label{eq:expJ2}
\cJ_p^{\cA}(A) = \frac{1}{|B|} \int_B p^T \cA \left(p + \nabla w^{\cA, A}_p \right) - \frac{1}{|B|} \int_{\Gamma} (Ap \cdot n) \, w^{\cA, A}_p.
\end{equation}

\subsection{Proof of Lemma~\ref{lem:lem1}}
\label{sec:prooflem1}

We first show that the sequence $\left( \left\| \nabla w_p^{\cA^R, A^R}\right\|_{L^2(\R^d)}\right)_{R>0}$ is bounded. The weak formulation of~\eqref{eq:pbn} is given by~\eqref{eq:FVgen} with $\cA \equiv \cA^R$ and $A \equiv A^R$. Using~\eqref{eq:rel1} (again with $\cA \equiv \cA^R$ and $A \equiv A^R$), we have
\begin{eqnarray*}
\alpha \left\| \nabla w_p^{\cA^R, A^R}\right\|_{L^2(\R^d)}^2
&\leq& 
\int_B \left(\nabla w_p^{\cA^R, A^R} \right)^T \cA^R \nabla w_p^{\cA^R, A^R}
+
\int_{\R^d \setminus B} \left(\nabla w_p^{\cA^R, A^R} \right)^T A^R \nabla w_p^{\cA^R, A^R}
\\
&=& 
\int_{\Gamma} (A^Rp \cdot n) \, w_p^{\cA^R, A^R} - \int_B p^T \cA^R \left( \nabla w_p^{\cA^R, A^R} \right)
\\
& \leq &
\beta\left( |\Gamma|^{1/2} \left\| w_p^{\cA^R, A^R}\right\|_{L^2(\Gamma)} + |B|^{1/2} \left\| \nabla w_p^{\cA^R, A^R} \right\|_{L^2(B)} \right) 
\\
& \leq &
\beta \left( |\Gamma|^{1/2} L + |B|^{1/2} \right) \left\|\nabla w_p^{\cA^R, A^R} \right\|_{L^2(\R^d)},
\end{eqnarray*}
where, in the last line, we have used~\eqref{eq:trace}. We deduce that, for all $R>0$, 
$$
\left\| \nabla w_p^{\cA^R, A^R} \right\|_{L^2(\R^d)} \leq \frac{\beta}{\alpha}\left( |\Gamma|^{1/2} L + |B|^{1/2} \right).
$$
Let $r>0$. Using~\eqref{eq:Poincare}, we deduce from the above bound that the sequence $\left( \left\| w_p^{\cA^R, A^R} \right\|_{H^1(B_r)} \right)_{R>0}$ is bounded. Therefore, up to the extraction of a subsequence, there exists a function $w^{\infty,r}_p \in H^1(B_r)$ such that
$$
w_p^{\cA^R, A^R} \mathop{\wlim}_{R\to +\infty} w^{\infty,r}_p \mbox{ weakly in $H^1(B_r)$}. 
$$ 
By uniqueness of the limit in the distributional sense, we see that $w^{\infty,r'}_p|_{B_r} = w^{\infty,r}_p$ for any $r' >r$. Thus, there exists a function $w_p^\infty\in H^1_{\rm loc}(\R^d)$ such that, up to the extraction of a subsequence,
\begin{equation}
  \label{eq:maison3}
w_p^{\cA^R, A^R} \mathop{\wlim}_{R\to +\infty} w^{\infty}_p \mbox{ weakly in $H^1_{\rm loc}(\R^d)$}. 
\end{equation}
Moreover, since the sequence $\left( \left\| \nabla w_p^{\cA^R, A^R} \right\|_{L^2(\R^d)} \right)_{R>0}$ is bounded, there exists $W_p^\infty \in \left( L^2(\R^d) \right)^d$ such that (up to the extraction of a subsequence) $\dps \nabla w_p^{\cA^R, A^R} \mathop{\wlim}_{R\to +\infty} W^\infty_p$ weakly in $L^2(\R^d)$. By uniqueness of the limit, we get that $\nabla w^{\infty}_p = W^\infty_p \in \left( L^2(\R^d) \right)^d$. As a consequence, we obtain that $w^\infty_p \in V$. In addition, we obviously have $\dps \int_B w^\infty_p= 0$ and thus $w^\infty_p \in V_0$.

\medskip

At this point, we have shown that, up to the extraction of a subsequence, $w_p^{\cA^R, A^R}$ weakly converges as $R \to \infty$ to $w^\infty_p$ in $H^1(B)$. Furthermore, we know that
$$
-\mbox{\div} \left( \cA^R \left( p + \nabla w_p^{\cA^R, A^R} \right) \right) = 0 \mbox{ in $\cD'(B)$}
$$ 
and that the sequence $\left( \cA^R \right)_{R>0}$ $G$-converges to $\cA^\star$ in $B$. Hence, using Theorem~\ref{th:th1} with the choice $D_1 = B$, we obtain that
\begin{equation}
\label{eq:toto_in}
\cA^R \left( p+ \nabla w_p^{\cA^R, A^R} \right) \wlim \cA^\star \left(p + \nabla w_p^\infty \right) \mbox{ weakly in $L^2(B)$}. 
\end{equation}
For any compact domain $D_1 \subset \R^d \setminus B$, we infer from~\eqref{eq:maison3} that
$$
A^R \left( p+ \nabla w_p^{\cA^R, A^R} \right) \wlim A^\infty \left( p + \nabla w_p^\infty \right) \mbox{ weakly in $L^2(D_1)$}.
$$
This implies that
\begin{equation}
\label{eq:toto_out}
A^R \left( p+ \nabla w_p^{\cA^R, A^R} \right) \wlim A^\infty \left( p + \nabla w_p^\infty \right) \mbox{ weakly in $L^2_{\rm loc}(\R^d \setminus B)$}.
\end{equation}
Collecting~\eqref{eq:toto_in} and~\eqref{eq:toto_out}, we get the claimed convergence~\eqref{eq:infty2}:
$$
{\cal A}^{\cA^R,A^R} \left(p+ \nabla w_p^{\cA^R, A^R} \right) \wlim {\cal A}^{\cA^\star,A^\infty} \left( p + \nabla w_p^\infty \right) \mbox{ weakly in $L^2_{\rm loc}(\R^d)$}.  
$$
Multiplying the above by $\nabla \varphi$, where $\varphi$ is an arbitrary function in ${\cal D}(\RR^d)$, and using~\eqref{eq:pbn}, we deduce~\eqref{eq:infty}. This concludes the proof of Lemma~\ref{lem:lem1}.

\subsection{Proof of Lemma~\ref{lem:lem2}}
\label{sec:prooflem2}

Assume first that $A^\star = A$. Then, for all $p\in\R^d$, $w_p^{A^\star,A} = 0$ is obviously the unique solution in $V_0$ to~\eqref{eq:taiwan}. This yields that $\dps A^\star p = \frac{1}{|B|} \int_B A^\star (p + \nabla w_p^{A^\star, A})$.

\medskip

Conversely, let us now assume that, for all $p\in \R^d$, we have
$$
A^\star p = \frac{1}{|B|} \int_B A^\star (p + \nabla w_p^{A^\star, A}).
$$
Since $A^\star$ is constant and invertible, this implies that $\dps \int_B \nabla w_p^{A^\star, A} = 0$. Multiplying this equation by $p^T A$, we get that $\dps 0 = \int_B p^T A \nabla w_p^{A^\star,A} = \int_\Gamma (A p\cdot n) \, w_p^{A^\star,A}$. We now write~\eqref{eq:rel1} with $\cA \equiv A^\star$:
$$
\int_B (\nabla w_p^{A^\star,A})^T A^\star \nabla w_p^{A^\star,A} + \int_{\R^d \setminus B} (\nabla w_p^{A^\star,A})^T A \nabla w_p^{A^\star,A} = - p^T A^\star \int_B \nabla w_p^{A^\star, A} + \int_\Gamma (A p\cdot n) \, w_p^{A^\star,A} = 0.
$$
We thus get that $\nabla w_p^{A^\star,A} = 0$ in $\R^d$. Hence~\eqref{eq:taiwan} yields that
$$
\forall p\in \R^d, \quad - \mbox{\rm div}\left[ \left(A^\star \chi_B + A (1 - \chi_B)\right)p\right] = 0 \mbox{ in $\cD'(\R^d)$}.
$$
Using Lemma~\ref{lem:tech}, we get that $A = A^\star$. This concludes the proof of Lemma~\ref{lem:lem2}.

\subsection{Proof of Lemma~\ref{lem:lemconcavity}}\label{sec:proofconcavity}

We first prove a technical lemma which will be used to prove the {\em strict} concavity of $\mathcal{J}^{\cA}$ when $d=3$.

\begin{lemma}\label{lem:dipole}
Let $r>0$ and let $S_r$ (respectively $B_r$) be the sphere (respectively the ball) of radius $r$ of $\R^3$ centered at the origin. Let $\sigma \in \cC^\infty(S_r)$ and $\Phi \in \R^{3\times 3}$ be a constant symmetric matrix such that $\mbox{\rm Tr }\Phi = 0$. Assume that
\begin{equation}\label{eq:rel2}
\forall x \in \R^3 \setminus \overline{B_r}, \quad \int_{S_r} \frac{(x-y)^T \Phi (x-y) }{|x-y|^5} \ \sigma(y) \ dy = 0.
\end{equation}
Then, it holds that either $\Phi = 0$ or $\sigma = 0$. 
\end{lemma}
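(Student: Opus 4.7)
The plan is to rewrite the hypothesis as a differential equation on a single-layer potential, reduce the question to an algebraic statement about harmonic polynomials via the multipole expansion, and then conclude by combining the Fischer decomposition with the irreducibility of $x_1^2+x_2^2+x_3^2$. First, I would define the classical single-layer potential $u(x) := \int_{S_r} \sigma(y)/|x-y|\,dy$, which is smooth on $\R^3\setminus S_r$, harmonic there, and decays at infinity. Using the elementary identity
\[
\partial_{x_i}\partial_{x_j}\frac{1}{|x-y|} = \frac{3(x_i-y_i)(x_j-y_j)}{|x-y|^5} - \frac{\delta_{ij}}{|x-y|^3},
\]
together with the assumption $\tr\Phi = 0$, one gets $\sum_{i,j}\Phi_{ij}\,\partial_{x_i}\partial_{x_j}(1/|x-y|) = 3(x-y)^T\Phi(x-y)/|x-y|^5$, so the hypothesis~\eqref{eq:rel2} is equivalent to $P(\partial) u(x) = 0$ for all $x \in \R^3 \setminus \overline{B_r}$, where $P(\xi):=\xi^T\Phi\xi$ is a harmonic polynomial of degree $2$ (harmonic precisely because $\tr\Phi=0$).

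Next, in the exterior $\{|x|>r\}$ the multipole (solid spherical harmonic) expansion yields $u = \sum_{\ell \geq 0} u_\ell$, where each $u_\ell$ is harmonic and positively homogeneous of degree $-(\ell+1)$. By the Kelvin--Maxwell representation one can write $u_\ell(x) = \kappa_\ell\,h_\ell(\partial)[1/|x|]$ for a uniquely determined harmonic polynomial $h_\ell$ of degree $\ell$ and a nonzero constant $\kappa_\ell$; moreover the $h_\ell$ are nonzero multiples of the spherical harmonic coefficients of $\sigma$, so $\sigma \neq 0$ iff some $h_\ell \neq 0$. Applying $P(\partial)$ term by term, $P(\partial) u_\ell$ is harmonic and homogeneous of degree $-(\ell+3)$; since these degrees are pairwise distinct, the vanishing of $P(\partial) u$ in the exterior is equivalent to $P(\partial) u_\ell \equiv 0$ for every $\ell$. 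Writing $P(\partial) u_\ell = \kappa_\ell\,(P h_\ell)(\partial)[1/|x|]$, the lemma is reduced to the following algebraic claim: if $P$ and $h_\ell$ are both nonzero harmonic polynomials in $\R[x_1,x_2,x_3]$, then $(Ph_\ell)(\partial)[1/|x|]\not\equiv 0$ on $\R^3\setminus\{0\}$.

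Finally, I would establish this algebraic claim via two standard facts. (i) The Fischer decomposition $\R[x_1,x_2,x_3] = \mathcal{H} \oplus |x|^2\R[x_1,x_2,x_3]$ (with $\mathcal{H}$ the space of harmonic polynomials), combined with $\Delta^k[1/|x|]=0$ for $k\geq 1$ away from the origin, shows that for any polynomial $Q$ one has $Q(\partial)[1/|x|] \equiv 0$ in $\R^3\setminus\{0\}$ iff $Q \in (|x|^2)$, and in particular that no nonzero harmonic polynomial lies in $(|x|^2)$. (ii) The polynomial $|x|^2 = x_1^2 + x_2^2 + x_3^2$ is irreducible in $\C[x_1,x_2,x_3]$ (a nondegenerate quadratic form in at least three variables is always irreducible), so the ideal $(|x|^2)$ is prime. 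Therefore $Ph_\ell \in (|x|^2)$ would force $P\in(|x|^2)$ or $h_\ell\in(|x|^2)$, and the harmonicity of $P$ and $h_\ell$ then forces $P=0$ or $h_\ell=0$. Applying this for every $\ell$ yields the desired dichotomy $\Phi=0$ or $\sigma=0$.

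The main obstacle is the last algebraic step: one genuinely needs three-dimensionality, because in two variables $x_1^2+x_2^2 = (x_1+ix_2)(x_1-ix_2)$ is reducible and the primality argument would fail. The analytic ingredients (boundary of the single-layer potential, multipole expansion, the Kelvin--Maxwell correspondence) are standard and provide the bridge to this algebraic core.
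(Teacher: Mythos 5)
Your proof is correct and takes a genuinely different route from the paper's. The published proof identifies $\widetilde V$ with the Newtonian potential of a compactly supported quadrupole layer $\rho$ on $S_r$; the hypothesis forces $V$ to be compactly supported, hence $\langle\rho,p\rangle=0$ for every harmonic polynomial $p$, which unpacks to $\int_{S_r}(\Phi:D^2p)\,\sigma=0$, and one concludes by showing---via an explicit parametrisation of $\mathcal{H}_{l+2}$ and a dimension count of the kernel---that $L_l:\mathcal{H}_{l+2}\to\mathcal{H}_l$, $p\mapsto\Phi:D^2p$, is \emph{surjective} when $\Phi\neq 0$. You instead pass to the single-layer potential $u$, rewrite the hypothesis as $P(\partial)u=0$ on the exterior with $P(\xi)=\xi^T\Phi\xi$, expand $u$ in solid harmonics of negative degree, and reduce the problem to the fact that the product of two nonzero harmonic polynomials never lies in $(|x|^2)$. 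The two algebraic cores are in effect dual: under the Fischer inner product, multiplication by $P$ followed by projection onto the harmonic part is adjoint to $P(\partial)$, so your injectivity statement is equivalent to the paper's surjectivity of $L_l$. Yours is conceptually cleaner and makes completely explicit where $d=3$ enters (irreducibility of a nondegenerate quadratic form in at least three variables), whereas the paper's is more elementary and self-contained, relying only on a recursion among the sections of a harmonic polynomial and a rank count, with the dimensional obstruction less visible. Two standard points are left implicit in your writeup and deserve a line each: the term-by-term application of $P(\partial)$ to the multipole series (uniform convergence of the differentiated series on compact exterior shells), and the separation of $\sum_l P(\partial)u_l=0$ into homogeneous pieces (uniqueness of the solid-harmonic expansion of a harmonic function on a spherical shell).
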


\begin{proof}[Proof of Lemma~\ref{lem:dipole}]
The proof falls in two steps.

\medskip

\noindent
{\bf Step 1.} The first part of the proof consists in showing that the function 
$$
\R^3 \setminus S_r \ni x \mapsto \widetilde V(x) := \int_{S_r} \frac{(x-y)^T \Phi(x-y)}{|x-y|^5} \ \sigma(y) \ dy \in \R
$$
is in fact the restriction to $\R^3 \setminus S_r$ of the electrostatic potential $V$ generated by the singular distribution $\rho \in {\cal E}'(\R^3)$ supported on $S_r$ and defined by 
\begin{equation}\label{eq:defrho}
\forall \psi \in C^\infty(\R^3), \quad \langle \rho,\psi \rangle_{{\cal E}',C^\infty} = \frac{1}{3} \int_{S_r} (\Phi : D^2\psi(y)) \ \sigma(y) \ dy. 
\end{equation}
The distribution $\rho$ can be interpreted as a smooth layer of quadrupoles on $S_r$. The link between $V$ and $\rho$ will be detailed below.

Since $\rho$ defined by~\eqref{eq:defrho} is compactly supported and of order $2$ (for $\Phi \neq 0$), its Fourier transform is analytic, does not grow faster than $|k|^2$ at infinity, and we have
$$
\widehat{\rho}(0) = \frac{1}{(2\pi)^{3/2}} \langle \rho, 1\rangle_{{\cal E}',C^\infty} = 0, \qquad \frac{\partial\widehat{\rho}}{\partial k_j}(0) = - \frac{i}{(2\pi)^{3/2}} \langle \rho, x_j \rangle_{{\cal E}',C^\infty} = 0.
$$
The Poisson equation $-\Delta V = 4\pi \rho$ therefore has a unique solution $V$ belonging to $\cS'(\R^3)$ and vanishing at infinity. We have $\widehat{V} \in L^\infty(\R^3)$ and
$$
\forall k \in \R^3 \setminus \{0\}, \quad \widehat{V}(k) = \frac{4\pi}{|k|^2} \, \widehat{\rho}(k).
$$
Let $\phi \in \cD(\R^3)$ be supported in $\R^3 \setminus S_r$ and $\psi = \phi \star |\cdot|^{-1}$. Note that $\phi \in {\cal S}(\R^3)$, $\psi \in C^\infty(\R^3)$, $\widehat{\psi} \in L^1(\R^3)$, and $|k|^2 \, \widehat{\psi}(k) = 4\pi \, \widehat{\phi}(k)$. However, $\psi \not\in {\cal S}(\R^3)$. We write
$$
\langle V,\phi \rangle_{{\cal D}',{\cal D}}
=
\langle V,\phi \rangle_{{\cal S}',{\cal S}}
=
\left\langle \overline{\widehat{V}}, \widehat{\phi} \right\rangle_{{\cal S}',{\cal S}}
=
\int_{\R^3} \overline{\widehat{V}(k)} \ \widehat{\phi}(k) \, dk
=
\int_{\R^3} \overline{\widehat{\rho}(k)} \ \widehat{\psi}(k) \, dk
=
\langle \rho,\psi \rangle_{{\cal E}',C^\infty}.
$$
For any $y \in S_r$, we have
$$
\psi(y) = \int_{{\rm Supp}(\phi)} \frac{\phi(x)}{|x-y|} \, dx,
$$
hence, for $y \in S_r$,
$$
\frac{\partial^2 \psi}{\partial y_i \partial y_j}(y) = 3 \int_{{\rm Supp}(\phi)} \frac{\phi(x) \, (x_i-y_i) \, (x_j - y_j)}{|x-y|^5} \, dx - \delta_{ij} \int_{{\rm Supp}(\phi)} \frac{\phi(x)}{|x-y|^3} \, dx.
$$
Using next the fact that $\mbox{\rm Tr }\Phi = 0$, we get
\begin{align*}
  \langle V,\phi \rangle_{{\cal D}',{\cal D}}
  =
  \langle \rho,\psi \rangle_{{\cal E}',C^\infty}
  & =
  \frac{1}{3} \int_{S_r} (\Phi : D^2 \psi(y)) \ \sigma(y) \ dy
  \\
  & =
  \int_{{\rm Supp}(\phi)} \left( \int_{S_r} \frac{(x-y)^T \Phi(x-y)}{|x-y|^5} \, \sigma(y) \, dy \right) \phi(x) \, dx
  \\
  &=
  \int_{{\rm Supp}(\phi)} \widetilde{V} \, \phi.
\end{align*}
Therefore $V|_{\R^3 \setminus S_r}=\widetilde{V}$, as claimed above. Furthermore, hypothesis~\eqref{eq:rel2} implies that $V=0$ in $\R^3 \setminus \overline{B_r}$, hence in particular that $V \in {\cal E}'(\R^3)$.

\medskip

\noindent
{\bf Step 2.} Let us denote by ${\cal H}_l$ the vector space of the homogeneous harmonic polynomials of total degree $l$. Recall that ${\rm dim}({\cal H}_l)=2l+1$ and that a basis of ${\cal H}_l$ consists of the functions of the form $(r^l Y_{lm}(\theta,\varphi))_{-l \leq m \leq l}$, where $(r,\theta,\varphi)$ are the usual spherical coordinates and $Y_{lm}$ are the real spherical harmonics. Since $V \in {\cal E}'(\R^3)$, we have
\begin{equation}\label{eq:harmpol}
\forall l \in \N, \quad \forall p_l \in {\cal H}_l, \quad \langle \rho, p_l \rangle_{{\cal E}',C^\infty} = - \frac {1}{4\pi} \langle \Delta V,p_l \rangle_{{\cal E}',C^\infty} = - \frac{1}{4\pi} \langle V, \Delta p_l \rangle_{{\cal E}',C^\infty} = 0.
\end{equation}
We now assume that $\Phi \neq 0$ and we show that $\sigma = 0$. Without loss of generality, we can assume that $\Phi = \mbox{\rm diag}(a_1,a_2,-a_1-a_2)$ with $a_1$ and $a_2$ in $\R_+$ and $a_1a_2 \neq 0$.

For any $l \in \N$, consider the map $L_l : {\cal H}_{l+2} \ni p_{l+2} \mapsto L_l \, p_{l+2} = \Phi : D^2p_{l+2} \in {\cal H}_l$. We are going to prove that $L_l$ is surjective. Any $p_{l+2} \in {\cal H}_{l+2} $ is of the form
$$
p_{l+2}(x_1,x_2,x_3) = \sum_{k=0}^{l+2} x_3^{l+2-k} q_k(x_1,x_2)
$$
where the $q_k$'s are homogeneous polynomials of total degree $k$ on $\R^2$ satisfying
\begin{equation}
  \label{eq:cond1} 
\forall 0 \le k \le l, \qquad \Delta q_{k+2}+(l+2-k)(l+1-k)q_k=0.
\end{equation}
If additionally $p_{l+2} \in \mbox{\rm Ker}(L_l)$, then there also holds
\begin{equation}
    \label{eq:cond2}
 \forall 0 \le k \le l+2, \qquad \lambda \frac{\partial^2q_k}{\partial x_1^2} + \frac{\partial^2q_k}{\partial x_2^2} = 0 \quad \text{with} \quad \lambda = \frac{2a_1+a_2}{a_1+2a_2}.
\end{equation} 
From~\eqref{eq:cond1}, we infer that $p_{l+2}$ is completely determined by $q_{l+1}$ and $q_{l+2}$. From~\eqref{eq:cond2}, we obtain that, for each $0 \le k \le l+2$, $r_k(x_1,x_2) := q_k(\lambda^{1/2}x_1,x_2)$ is a two-dimensional harmonic homogeneous polynomial of order $k$. Consequently, we have 
$$
r_k(x_1,x_2) = \alpha_k \, \mbox{\rm Re}\big( (x_1+ix_2)^k \big) + \beta_k \, \mbox{\rm Im}\big( (x_1+ix_2)^k \big) \quad \text{for some $\alpha_k$ and $\beta_k$ in $\R$.}
$$
An element of $\mbox{Ker}(L_l)$ is therefore completely determined by $\alpha_{k+1}$, $\beta_{k+1}$, $\alpha_{k+2}$ and $\beta_{k+2}$. Hence, $\mbox{\rm dim}({\mbox{\rm Ker}}(L_l))=4$. It follows that 
$$
\mbox{\rm Rank}(L_l)=\mbox{\rm dim}({\cal H}_{l+2})-\mbox{\rm dim}(\mbox{\rm Ker}(L_l))=(2(l+2)+1)-4=2l+1=\mbox{\rm dim}({\cal H}_l).
$$
Therefore $L_l$ is surjective.

For any $l \in \N$ and $q_l \in {\cal H}_l$, there thus exists $p_{l+2} \in {\cal H}_{l+2}$ such that $q_l = L_l p_{l+2}$. We then deduce from~\eqref{eq:defrho} and~\eqref{eq:harmpol} that 
$$
\int_{S_r} q_l(y) \, \sigma(y) \, dy = \int_{S_r} L_l p_{l+2}(y) \, \sigma(y) \, dy = 3 \langle \rho, p_{l+2} \rangle_{{\cal E}',C^\infty} = 0.
$$
Since $(Y_{lm})_{-l \le m \le l}$ is a basis of ${\cal H}_l$, we finally obtain that
$$
\forall l \in \N, \quad \forall -l \le m \le l, \quad  \int_{{\mathbb S}^2} Y_{lm}(y) \, \sigma(ry) \, dy = 0,
$$
where ${\mathbb S}^2$ is the unit sphere of $\R^3$. This implies that $\sigma=0$ and thus concludes the proof of Lemma~\ref{lem:dipole}.
\end{proof}

\medskip

We are now in position to prove Lemma~\ref{lem:lemconcavity}. 

\begin{proof}[Proof of Lemma~\ref{lem:lemconcavity}]
Let $\cA\in L^\infty(B, \cM)$. We first prove that, for all $p \in \R^d$, the function $\cM \ni A \mapsto \cJ_p^{\cA}(A)$ is concave. We next prove its strict concavity. The proof falls in three steps.

\medskip

\noindent
\textbf{Step 1.} The concavity of $\cJ_p^{\cA}$ is a straighforward consequence of~\eqref{eq:optim1}--\eqref{eq:optim2}--\eqref{eq:def_curly_J}: $\cJ_p^{\cA}(A)$ is the minimum of a family of functions that depend on $A$ in an affine way: it is hence concave. Because it will be useful for the proof of strict concavity, we now proceed more quantitatively. We recall that $w_p^{\cA, A}$ is defined by~\eqref{eq:pbbase} or equivalently~\eqref{eq:optim1}. Consider $A_1$ and $A_2$ in $\cM$, $\lambda \in [0,1]$ and $A_\lambda = \lambda A_1 + (1-\lambda) A_2$. We compute that
\begin{align*}
& |B| \, \cJ_p^{\cA}(A_\lambda)
\\
&=
|B| \, J_p^{\cA,A_\lambda}(w_p^{\cA, A_\lambda})
\\
&=
\int_B (p + \nabla w_p^{\cA, A_\lambda})^T \cA (p + \nabla w_p^{\cA, A_\lambda}) + \int_{\R^d\setminus B} (\nabla w_p^{\cA, A_\lambda})^T A_\lambda \nabla w_p^{\cA, A_\lambda} - 2 \int_{\Gamma} (A_\lambda p\cdot n) \, w_p^{\cA, A_\lambda}
\\
&=
\lambda \left( \int_B (p + \nabla w_p^{\cA, A_\lambda})^T \cA (p + \nabla w_p^{\cA, A_\lambda}) + \int_{\R^d\setminus B} (\nabla w_p^{\cA, A_\lambda})^T A_1 \nabla w_p^{\cA, A_\lambda} - 2 \int_{\Gamma} (A_1 p\cdot n) \, w_p^{\cA, A_\lambda} \right)
\\
&+
(1-\lambda) \left( \int_B (p + \nabla w_p^{\cA, A_\lambda})^T \cA (p + \nabla w_p^{\cA, A_\lambda}) + \int_{\R^d\setminus B} (\nabla w_p^{\cA, A_\lambda})^T A_2 \nabla w_p^{\cA, A_\lambda} - 2 \int_{\Gamma} (A_2 p\cdot n) \, w_p^{\cA, A_\lambda} \right)
\\
&= \lambda \, |B| \, J_p^{\cA,A_1}(w_p^{\cA, A_\lambda}) + (1-\lambda) \, |B| \, J_p^{\cA,A_2}(w_p^{\cA, A_\lambda}).
\end{align*}
In view of~\eqref{eq:def_curly_J}, we obtain that
\begin{equation}
  \label{eq:sera_utile1}
\cJ_p^{\cA}(A_\lambda) \geq \lambda \cJ_p^{\cA}(A_1) + (1-\lambda) \cJ_p^{\cA}(A_2),
\end{equation}
which means, as already pointed out above, that the function $\cM \ni A \mapsto \cJ_p^{\cA}(A)$ is concave. Furthermore, since the minimizer of $J_p^{\cA,A}$ is unique for any $A \in \cM$, we get that 
\begin{equation}
  \label{eq:sera_utile2}
  \cJ_p^{\cA}(A_\lambda) = \lambda \cJ_p^{\cA}(A_1) + (1-\lambda) \cJ_p^{\cA}(A_2) \quad \Longrightarrow \quad w_p^{\cA, A_\lambda} = w_p^{\cA, A_1} = w_p^{\cA, A_2}.
\end{equation}

\bigskip

We now prove the strict concavity of $\dps \cJ^{\cA} = \sum_{i=1}^d \cJ^{\cA}_{e_i}$ in the case when $d\leq3$. To this aim, we assume that there exists two matrices $A_1$ and $A_2$ in $\cM$ so that 
\begin{equation}\label{eq:nostrict}
\forall \lambda\in (0,1), \quad \lambda \cJ^{\cA}(A_1) + (1-\lambda)\cJ^{\cA}(A_2) = \cJ^{\cA}\big(\lambda A_1 + (1-\lambda) A_2\big),
\end{equation}
and we are going to show that $A_1 = A_2$.

In view of~\eqref{eq:sera_utile1}, the assumption~\eqref{eq:nostrict} implies that, for any $1 \leq i \leq d$,
$$
\forall \lambda\in (0,1), \quad \lambda \cJ^{\cA}_{e_i}(A_1) + (1-\lambda) \cJ^{\cA}_{e_i}(A_2) = \cJ^{\cA}_{e_i}\big(\lambda A_1 + (1-\lambda) A_2\big),
$$
which implies, in view of~\eqref{eq:sera_utile2}, that
$$
\forall \lambda\in (0,1), \quad w_p^{\cA, \lambda A_1+(1-\lambda) A_2} = w_p^{\cA, A_1} = w_p^{\cA, A_2}.
$$
For the sake of simplicity, we denote this function by $w_i$ in the rest of the proof. It satisfies 
$$
-\mbox{\rm div} \left( A_1 (e_i + \nabla w_i) \right) = -\mbox{\rm div}\left( A_2 (e_i + \nabla w_i) \right) = 0 \mbox{ in $\cD'\big(\R^d \setminus \overline{B}\big)$}. 
$$
Since $A_1$ and $A_2$ are constant matrices, this implies that, for any $1 \leq i \leq d$,
\begin{equation}\label{eq:solwi}
-\mbox{\rm div}\left( A_1 \nabla w_i \right) = -\mbox{\rm div}\left( A_2 \nabla w_i \right) = 0 \mbox{ in $\cD'\big(\R^d \setminus \overline{B}\big)$}. 
\end{equation}
Standard elliptic regularity theory implies that $w_i$ is analytic in $\R^d \setminus \overline{B}$ (see e.g.~\cite[Sec.~2.4 p.~18]{gilbarg2001elliptic}).

\bigskip

\noindent
\textbf{Step 2.} We now proceed by proving that, when $d \leq 3$, equation~\eqref{eq:solwi} implies that
\begin{equation}\label{eq:solwi_bis}
\text{either $A_1$ and $A_2$ are proportional or $w_i$ is a constant function in $\R^d \setminus \overline{B}$}. 
\end{equation}
The case $d=1$ is straightforward. We now consider the case $d=2$. Without loss of generality, we can assume that $A_1 = I_2$ and $A_2$ is diagonal (this can be shown by a linear coordinate transform and the unique continuation principle). If $A_1$ and $A_2$ are not proportional, it follows from~\eqref{eq:solwi} that
$$
\forall x=(x_1,x_2) \in \R^2 \setminus \overline{B}, \qquad \frac{\partial^2 w_i}{\partial x_1^2}(x_1,x_2) = \frac{\partial^2 w_i}{\partial x_2^2}(x_1,x_2) = 0.
$$
This implies that there exists $(a,b,c,d) \in \R^4$ such that $w(x_1,x_2) = a x_1 x_2 + b x_1 + c x_2 + d$ in $\R^2 \setminus \overline{B}$. Since $\nabla w_i \in L^2(\R^2 \setminus \overline{B})$, it follows that $a=b=c=0$, and hence the claim~\eqref{eq:solwi_bis} when $d=2$.

\medskip

We now turn to the case $d=3$, which is more difficult. Let $r>1$ be sufficiently large so that the following two conditions are satisfied:
$$
E:= A_1^{1/2} S_r \subset \R^d \setminus \overline{B} \quad \mbox{and thus also} \quad \R^d \setminus \left( A_1^{1/2} \overline{B_r} \right) \subset \R^d \setminus \overline{B},
$$
where we recall that $S_r$ (respectively $B_r$) is the sphere (respectively open ball) of radius $r$ in $\R^3$. As a consequence of~\eqref{eq:solwi}, there exists a function $\sigma_i \in \cC^\infty(E)$ so that, for all $x \in \R^d \setminus \left( A_1^{1/2} \overline{B_r} \right)$, we have
\begin{equation}
  \label{eq:taiwan2}
w_i(x) = C + \int_{E} G_{A_1}(x-e) \, \sigma_i(e) \, de
\end{equation}
where $C$ is a constant and $G_{A_1}$ is the Green function of the operator $-\mbox{\rm div}\left( A_1 \nabla \cdot \right)$, which reads $\dps G_{A_1}(z) = \frac{1}{4 \pi \, \sqrt{\mbox{\rm det}(A_1)}} \frac{1}{\sqrt{z^T (A_1)^{-1} z}}$ for all $z\in \R^3 \setminus \{0\}$. Using the change of variables $y := A_1^{-1/2} e$, we obtain that there exists a constant $c>0$ so that
$$
w_i(x) = C + c \int_{S_r} \frac{1}{\left| A_1^{-1/2}x-y \right|} \ \sigma_i(A_1^{1/2} y) \, dy.
$$
Let us denote $\Psi:= A_2 - A_1$. For any $x \in \R^d \setminus \left( A_1^{1/2} \overline{B_r} \right)$, it holds that
\begin{align*}
  0& = \mbox{\rm div}_x \left( \Psi \nabla_x w_i(x)\right)
  \\
  &= c \int_{S_r} \mbox{\rm div}_x\left( \Psi \nabla_x \left[ \frac{1}{\left| A_1^{-1/2}x - y \right|} \right] \right) \, \sigma_i(A_1^{1/2} y) \, dy
  \\
  & = c \int_{S_r} \mbox{\rm div}_x \left( - \frac{\Psi A_1^{-1/2}\left( A_1^{-1/2}x - y \right)}{\left| A_1^{-1/2}x - y \right|^3} \right) \, \sigma_i(A_1^{1/2} y) \, dy
  \\
  & = c \int_{S_r} \left[ 3 \, \frac{\left( A_1^{-1/2}x - y\right)^T A_1^{-1/2} \Psi A_1^{-1/2} \left(A_1^{-1/2}x - y\right)}{\left| A_1^{1/2}x - y \right|^5} - \frac{\mbox{\rm Tr}\left( A_1^{-1/2} \Psi A_1^{-1/2} \right)}{{\left| A_1^{-1/2}x-y \right|^3}}\right] \sigma_i(A_1^{1/2} y) \, dy
  \\
& = c \int_{S_r} \left[ \frac{\left(A_1^{-1/2}x -y\right)^T \Phi \left(A_1^{-1/2}x -y\right)}{\left| A_1^{-1/2}x-y \right|^5}\right] \, \sigma_i(A_1^{1/2} y) \, dy,
\end{align*}
where $\Phi := 3 A_1^{-1/2} \Psi A_1^{-1/2} - \mbox{\rm Tr}(A_1^{-1/2} \Psi A_1^{-1/2}) \, I_3$ is a symmetric matrix, the trace of which vanishes. Since this equality holds true for all $x \in \R^d \setminus \left( A_1^{1/2} \overline{B_r} \right)$, it holds that, for all $\overline{x} \in \R^d \setminus \overline{B_r}$,
$$
0 = \int_{S_r} \left[ \frac{\left(\overline{x} -y\right)^T \Phi \left(\overline{x} -y\right)}{| \overline{x}-y |^5}\right] \widehat{\sigma}_i(y)\,dy, 
$$
where for all $y\in S_r$, $\widehat{\sigma}_i(y) = \sigma_i(A_1^{1/2}y)$. Lemma~\ref{lem:dipole} then implies that:
\begin{itemize}
\item either $\widehat{\sigma}_i = 0$, hence $\sigma_i = 0$, which implies, in view of~\eqref{eq:taiwan2}, that $w_i = C$ on $\R^d \setminus \left( A_1^{1/2} \overline{B}_r \right) \subset \R^d \setminus \overline{B}$. Since $w_i$ is analytic in $\R^d \setminus \overline{B}$, we get that $w_i = C$ on $\R^d \setminus \overline{B}$ (this is the unique continuation property for elliptic equations, see e.g.~\cite{protter}).
  \item or $\Phi = 0$. Then $\Psi = A_2 - A_1 = \mu A_1$ for some $\mu \in \R$ and thus $A_1$ and $A_2$ are proportional.
\end{itemize}
This proves the claim~\eqref{eq:solwi_bis} when $d=3$.

\medskip

\noindent
{\bf Step 3.} We have shown in Step 2 that, when $d \leq 3$, \eqref{eq:solwi_bis} holds for any $1 \leq i \leq d$. We now successively consider the two cases of~\eqref{eq:solwi_bis}.

\medskip

\noindent
{\bf Step 3a.} We consider the first possibility in~\eqref{eq:solwi_bis} and assume that $A_1$ and $A_2$ are proportional, that is $A_2 = (1 + \mu) A_1$. We proceed by contradiction and assume that $\mu \neq 0$. Since 
$$
A_1 (\nabla w_i +p) \cdot n = \cA (\nabla w_i +p)\cdot n = A_2 (\nabla w_i +p) \cdot n = (1+\mu) A_1 (\nabla w_i +p) \cdot n \mbox{ on $\Gamma$}
$$
with $\mu \neq 0$, these functions have to be equal to zero on $\Gamma$. The function $u \in H^1(B)$ defined by $u(x) := w_i(x) + p\cdot x$ for all $x \in B$ is then solution to
$$
- \mbox{\rm div}\left(\cA \nabla u\right) = 0 \quad \mbox{ in $B$},
\qquad
\cA \nabla u \cdot n = 0 \quad \mbox{ on $\Gamma$}.
$$
As a consequence, there exists a constant $C\in \R$ such that $u=C$ in $B$, and $w_i(x) = -p\cdot x + C$ for all $x\in B$. In particular, $\nabla w_i +p = 0$ in $B$. Using the variational formulation~\eqref{eq:FVgen} of the embedded corrector problem with test function $w_i$, we get
$$
\int_{\R^d \setminus B} \left(\nabla w_i\right)^T A_1 \nabla w_i = \int_\Gamma (A_1 p\cdot n) \, w_i, 
$$
and 
$$
\int_{\R^d \setminus B} \left(\nabla w_i\right)^T A_2 \nabla w_i = \int_\Gamma (A_2 p\cdot n) \, w_i. 
$$
In view of~\eqref{eq:optim2}, this implies that 
$$
\cJ^{\cA}_{e_i}(A_1) = - \frac{1}{|B|} \int_{\R^d \setminus B} \left(\nabla w_i\right)^T A_1 \nabla w_i
$$
and 
$$
\cJ^{\cA}_{e_i}(A_2) = - \frac{1}{|B|} \int_{\R^d \setminus B} \left(\nabla w_i\right)^T A_2 \nabla w_i = (1+\mu) \cJ^{\cA}_{e_i}(A_1).
$$
Since $\mu \neq 0$, we obtain that $\cJ^{\cA}_{e_i}(A_2) = \cJ^{\cA}_{e_i}(A_1) = 0$, which yields that $\nabla w_i = 0$ in $\R^d \setminus B$. As a consequence, there exists $\widetilde{C} \in \R$ such that $w_i(x) = \widetilde{C}$ for all $x\in \R^d \setminus B$. The continuity of $w_i$ on $\Gamma$ implies that
$$
\forall x\in \Gamma, \quad C - p\cdot x = \widetilde{C},
$$
which yields the desired contradiction. We hence have shown that, if $A_1$ and $A_2$ are proportional, then $A_2 = A_1$.

\medskip

\noindent
{\bf Step 3b.} We next assume that $A_1$ and $A_2$ are not proportional. Then, in view of~\eqref{eq:solwi_bis}, we know that, for any $1 \leq i \leq d$, $w_i$ is a constant function in $\R^d \setminus \overline{B}$, hence $\nabla w_i = 0$ in $\R^d \setminus \overline{B}$. The function $w_i$ satisfies~\eqref{eq:pbbase} for the tensor $\ccA^{\cA,A_1}$, which implies that
$$
\left. n^T \cA (e_i + \nabla w_i) \right|_{\Gamma_-} = \left. n^T A_1 (e_i + \nabla w_i) \right|_{\Gamma_+} = n^T A_1 e_i \quad \text{on $\Gamma$}.
$$
Since $w_i$ also satisfies~\eqref{eq:pbbase} for the tensor $\ccA^{\cA,A_2}$, we have
$$
\left. n^T \cA (e_i + \nabla w_i) \right|_{\Gamma_-} = n^T A_2 e_i \quad \text{on $\Gamma$}.
$$
We hence deduce that $n^T A_1 e_i = n^T A_2 e_i$ on $\Gamma$, hence $A_1 e_i = A_2 e_i$. This holds for any $1 \leq i \leq d$, thus $A_1 = A_2$.

\medskip

This concludes the proof of Lemma~\ref{lem:lemconcavity}.
\end{proof}

\subsection{Proof of Proposition~\ref{prop:prop1}}
\label{sec:proofprop1}

\noindent
{\bf Step 1: $A_1^R$ converges to $A^\star$.} Since $A_1^R \in \cM$, all its coefficients are bounded. Up to the extraction of a subsequence (which we still denote by $(A_1^R)_{R>0}$ for the sake of simplicity), we know that there exists a matrix $A^\infty_1 \in \cM$ such that $\dps \lim_{R \to \infty} A^R_1 = A^\infty_1$. We now prove that $A^\infty_1 = A^\star$, which implies the convergence of the whole sequence $(A^R_1)_{R>0}$ to $A^\star$.

\medskip

Let $p\in\R^d$. It follows from Lemma~\ref{lem:lem1} that $w_p^{\cA^R, A^R_1}$ weakly converges in $H^1_{\rm loc}(\R^d)$ to $w_p^{A^\star, A^\infty_1}$. In addition,
\begin{equation}
\label{eq:lim}
\ccA^{\cA^R,A^R_1} (p + \nabla w_p^{\cA^R, A^R_1}) \wlim \left(A^\star \chi_B + A^\infty_1 (1-\chi_B) \right) \left(p + \nabla w_p^{A^\star, A^\infty_1}\right) \mbox{ weakly in $L^2_{\rm loc}(\R^d)$}. 
\end{equation}
To prove that $A^\infty_1 = A^\star$, we consider a second family of functions of $V_0$, namely $\left(w_p^{\cA^R, A^\star}\right)_{R>0}$. Recall that, for all $R>0$, $w_p^{\cA^R, A^\star}$ is the unique solution in $V_0$ to
$$
-\mbox{\div} \left( \ccA^{\cA^R,A^\star} \left( p + \nabla w_p^{\cA^R, A^\star} \right)\right) = 0 \mbox{ in $\cD'(\R^d)$}. 
$$
Using Lemma~\ref{lem:lem1} again, we obtain that $\left(w_p^{\cA^R, A^\star}\right)_{R>0}$ weakly converges in $H^1_{\rm loc}(\R^d)$ to $w_p^{A^\star, A^\star} = 0$. Furthermore, we have 
\begin{equation}
\label{eq:limchek}
\ccA^{\cA^R,A^\star} \left( p + \nabla w_p^{\cA^R, A^\star} \right) \wlim A^\star \left( p + \nabla w_p^{A^\star, A^\star} \right) = A^\star p \mbox{ weakly in $L^2_{\rm loc}(\R^d)$}.
\end{equation}
Since $A^R_1$ is (the unique) solution to~\eqref{eq:optimisation}, we have
$$
\sum_{i=1}^d \cJ^{\cA^R}_{e_i}(A^\star) \leq \sum_{i=1}^d \cJ^{\cA^R}_{e_i}(A^R_1), 
$$
which reads, using~\eqref{eq:expJ2}, as
\begin{multline}
\label{eq:genial}
\sum_{i=1}^d \int_B e_i^T \cA^R \left( e_i + \nabla w^{\cA^R, A^\star}_{e_i} \right) - \int_{\Gamma} (A^\star e_i \cdot n) \, w^{\cA^R, A^\star}_{e_i} \\
\leq \sum_{i=1}^d \int_B e_i^T \cA^R \left( e_i + \nabla w_{e_i}^{\cA^R, A^R_1} \right) - \int_{\Gamma} ( A_1^R e_i \cdot n) \, w_{e_i}^{\cA^R, A^R_1}.
\end{multline}
We wish to pass to the limit $R \to \infty$ in this inequality. Using~\eqref{eq:lim} and~\eqref{eq:limchek}, we first have, for any $p \in \R^d$,
\begin{equation}
\label{eq:exc1}
\int_B p^T \cA^R \left( p + \nabla w_p^{\cA^R, A^R_1} \right) \mathop{\longrightarrow}_{R\to +\infty} \int_B p^T A^\star \left( p + \nabla w_p^{A^\star, A^\infty_1} \right) 
\end{equation}
and
\begin{equation}
\label{eq:exc2}
\int_B p^T \cA^R \left( p + \nabla w_p^{\cA^R, A^\star} \right) \mathop{\longrightarrow}_{R\to +\infty} \int_B p^T A^\star p.
\end{equation}
Second, we know that $\widetilde{w}_p^{\cA^R, A^\star}$ (respectively $\widetilde{w}_p^{\cA^R, A^R_1}$) weakly converges in $H^1(B)$ to $\widetilde{w}_p^{A^\star, A^\star} = 0$ (respectively to $\widetilde{w}_p^{A^\star, A^\infty_1}$). The compactness of the trace operator from $H^1(B)$ to $L^2(\Gamma)$ yields that these convergences also hold strongly in $L^2(\Gamma)$. Thus,
\begin{equation}\label{eq:exc3}
\int_\Gamma (A^\star p \cdot n) \, w_p^{\cA^R, A^\star} \mathop{\longrightarrow}_{R\to +\infty} 0
\end{equation}
and
\begin{equation}\label{eq:exc4}
\int_\Gamma (A_1^R p \cdot n) \, w_p^{\cA^R, A^R_1} \mathop{\longrightarrow}_{R\to +\infty} \int_\Gamma (A^\infty_1 p \cdot n) \, w_p^{A^\star, A^\infty_1}.
\end{equation}
Collecting~\eqref{eq:exc1}, \eqref{eq:exc2}, \eqref{eq:exc3} and~\eqref{eq:exc4}, we are in position to pass to the limit $R \to \infty$ in~\eqref{eq:genial}, and deduce that
\begin{equation}
\label{eq:ineg}
\sum_{i=1}^d \int_B e_i^T A^\star e_i \leq \sum_{i=1}^d \int_B e_i^T A^\star \left( e_i + \nabla w_{e_i}^{A^\star, A^\infty_1} \right) - \int_\Gamma (A_1^\infty e_i \cdot n) \, w_{e_i}^{A^\star, A^\infty_1}.
\end{equation}
In view of~\eqref{eq:rel1}, we have that, for all $p\in\R^d$,
\begin{multline*}
\int_B p^T A^\star \nabla w_p^{A^\star, A^\infty_1} - \int_\Gamma (A_1^\infty p \cdot n) \, w_p^{A^\star, A^\infty_1}
\\
= 
- \int_{\R^d \setminus B} \left( \nabla w_p^{A^\star, A^\infty_1} \right)^T A^\infty_1 \nabla w_p^{A^\star, A^\infty_1} - \int_B \left( \nabla w_p^{A^\star, A^\infty_1} \right)^T A^\star \nabla w_p^{A^\star, A^\infty_1},
\end{multline*}
which implies that
\begin{multline*}
\int_B p^T A^\star \left( p + \nabla w_p^{A^\star, A^\infty_1} \right) - \int_\Gamma (A_1^\infty p \cdot n) \, w_p^{A^\star, A^\infty_1}
\\
= \int_B p^T A^\star p - \int_{\R^d \setminus B} \left( \nabla w_p^{A^\star, A^\infty_1} \right)^T A^\infty_1 \nabla w_p^{A^\star, A^\infty_1} - \int_B \left( \nabla w_p^{A^\star, A^\infty_1} \right)^T A^\star \nabla w_p^{A^\star, A^\infty_1}.
\end{multline*}
Thus, \eqref{eq:ineg} yields that 
$$
0 \leq - \sum_{i=1}^d \left[ \int_B \left( \nabla w_{e_i}^{A^\star, A^\infty_1} \right)^T A^\star \nabla w_{e_i}^{A^\star, A^\infty_1} + \int_{\R^d \setminus B}  \left( \nabla w_{e_i}^{A^\star, A^\infty_1} \right)^T A^\infty_1 \nabla w_{e_i}^{A^\star, A^\infty_1}\right],
$$
which implies that $\nabla w_{e_i}^{A^\star, A^\infty_1} = 0$ on $\RR^d$ for all $1\leq i \leq d$. As a consequence, for all $1\leq i \leq d$, 
$$
- \mbox{\rm div}\left[ \left(A^\star \chi_B + A_1^\infty(1-\chi_B)\right) e_i\right] = 0 \mbox{ in $\cD'(\R^d)$}.
$$
In view of Lemma~\ref{lem:tech}, this implies that $A^\infty_1 = A^\star$ and concludes the proof of the first assertion of Proposition~\ref{prop:prop1}. 

\medskip

\noindent
{\bf Step 2: $A_2^R$ converges to $A^\star$.} Recall that $A^R_2$ is defined, following~\eqref{eq:def2}, by $p^T A^R_2 p = \cJ^{\cA^R}_p(A_1^R)$. Using~\eqref{eq:expJ2} and the above arguments, we see that
$$
\lim_{R \to \infty} \cJ^{\cA^R}_p(A_1^R) 
=
\frac{1}{|B|} \int_B p^T A^\star \left( p + \nabla w_p^{A^\star, A^\infty_1} \right) - \frac{1}{|B|} \int_\Gamma (A^\infty_1 p \cdot n) \, w_p^{A^\star, A^\infty_1}.
$$
Since $w_p^{A^\star, A^\infty_1} = w_p^{A^\star, A^\star} = 0$, we get that $\dps \lim_{R \to \infty} \cJ^{\cA^R}_p(A_1^R) = p^T A^\star p$. For any $p \in \RR^d$, we thus have $\dps \lim_{R \to \infty} p^T A^R_2 p = p^T A^\star p$, hence $\dps \lim_{R \to \infty} A^R_2 = A^\star$. This concludes the proof of the second assertion of Proposition~\ref{prop:prop1}.

\subsection{Proof of Proposition~\ref{prop:prop2}}
\label{sec:proofprop2}

Since $A^R_3\in \cM$, all its coefficients are bounded. Hence, up to the extraction of a subsequence (that we still denote by $\left( A^R_3\right)_{R>0}$ to simplify the notation), there exists a matrix $A^\infty_3 \in \cM$ such that $\dps A^R_3 \mathop{\longrightarrow}_{R\to +\infty} A^\infty_3$. We show that $A^\infty_3 = A^\star$.

Let $p\in\RR^d$. Recall that, for all $R>0$, $w_p^{\cA^R, A^R_3}$ is the unique solution in $V_0$ to
$$
-\mbox{\div}\left( \ccA^{\cA^R,A^R_3} \left(p + \nabla w_p^{\cA^R, A^R_3}\right)\right) = 0 \mbox{ in $\cD'(\R^d)$}.
$$
Using Lemma~\ref{lem:lem1}, we have that $\left(w_p^{\cA^R, A^R_3}\right)_{R>0}$ weakly converges in $H^1_{\rm loc}(\R^d)$ to $w_p^{A^\star, A^\infty_3}$, which is the unique solution in $V_0$ to
\begin{equation}
\label{eq:lim2}
-\mbox{\div}\Big( \left(A^\star \chi_B + A^\infty_3(1-\chi_B) \right) \left(p + \nabla w_p^{A^\star, A^\infty_3} \right)\Big) = 0 \mbox{ in $\cD'(\R^d)$}.
\end{equation}
Using~\eqref{eq:defGA} and~\eqref{eq:expJ2}, we see that
$$
p^T G^{\cA^R}(A^R_3) p 
= 
\cJ^{\cA^R}_p(A^R_3)
= 
\frac{1}{|B|} \int_B p^T \cA^R \left( p + \nabla w_p^{\cA^R, A^R_3} \right) - \frac{1}{|B|} \int_\Gamma \left( A^R_3 p \cdot n \right) \, w_p^{\cA^R, A^R_3}.
$$
Using Lemma~\ref{lem:lem1} and arguing as in the proof of Proposition~\ref{prop:prop1}, we deduce that
$$
\lim_{R \to \infty} p^T G^{\cA^R}(A^R_3) p 
=
\frac{1}{|B|} \int_B p^T A^\star \left( p + \nabla w_p^{A^\star, A^\infty_3} \right) - \frac{1}{|B|} \int_\Gamma \left( A^\infty_3 p\cdot n \right) \, w_p^{A^\star, A^\infty_3}.
$$
Passing to the limit $R \to \infty$ in~\eqref{eq:def3}, we hence get that
\begin{equation}
\label{eq:genial2}
p^T A^\infty_3 p 
= 
\frac{1}{|B|} \int_B p^T A^\star \left( p + \nabla w_p^{A^\star, A^\infty_3} \right) - \frac{1}{|B|} \int_\Gamma \left( A^\infty_3 p \cdot n \right) \, w_p^{A^\star, A^\infty_3}.
\end{equation}
Using the relation~\eqref{eq:rel1} for the problem~\eqref{eq:lim2}, we have that
\begin{multline*}
\int_B \left( \nabla w_p^{A^\star, A^\infty_3} \right)^T A^\star \nabla w_p^{A^\star, A^\infty_3} + \int_{\R^d \setminus B} \left( \nabla w_p^{A^\star, A^\infty_3} \right)^T A^\infty_3 \nabla w_p^{A^\star, A^\infty_3} 
\\
= - \int_B p^T A^\star \nabla w_p^{A^\star, A^\infty_3} + \int_\Gamma \left( A^\infty_3 p \cdot n \right) \, w_p^{A^\star, A^\infty_3}.
\end{multline*}
We thus deduce from~\eqref{eq:genial2} that
\begin{equation}
\label{eq:relation}
p^T A^\infty_3 p = p^T A^\star p - \frac{1}{|B|} \int_B \left( \nabla w_p^{A^\star, A^\infty_3} \right)^T A^\star \nabla w_p^{A^\star, A^\infty_3} - \frac{1}{|B|} \int_{\R^d \setminus B} \left( \nabla w_p^{A^\star, A^\infty_3} \right)^T A^\infty_3 \nabla w_p^{A^\star, A^\infty_3}.
\end{equation}
This implies that 
\begin{equation}
\label{eq:genial3}
\text{$A^\infty_3 \leq A^\star$ in the sense of symmetric matrices.}
\end{equation} 
In addition, we infer from~\eqref{eq:relation} that
\begin{align}
&
\int_B \left( p + \nabla w_p^{A^\star, A^\infty_3} \right)^T A^\star p 
\nonumber
\\
&= 
|B| \, p^T A^\star p + \int_B p^T A^\star \nabla w_p^{A^\star, A^\infty_3}
\nonumber
\\
&=
|B| \, p^T A_3^\infty p + \int_B \left( p + \nabla w_p^{A^\star, A^\infty_3} \right)^T A^\star \nabla w_p^{A^\star, A^\infty_3} + \int_{\R^d \setminus B} \left( \nabla w_p^{A^\star, A^\infty_3} \right)^T A^\infty_3 \nabla w_p^{A^\star, A^\infty_3}.
\label{eq:genial4}
\end{align}
The variational formulation of~\eqref{eq:lim2}, tested with the test function $w_p^{A^\star, A^\infty_3}$, yields
$$
0 
=
\int_B \left( p + \nabla w_p^{A^\star, A^\infty_3} \right)^T A^\star \nabla w_p^{A^\star, A^\infty_3} - \int_\Gamma \left( A^\infty_3 p \cdot n \right) \, w_p^{A^\star, A^\infty_3}
+ \int_{\R^d \setminus B} \left( \nabla w_p^{A^\star, A^\infty_3} \right)^T A^\infty_3 \nabla w_p^{A^\star, A^\infty_3}.
$$
Subtracting twice the above relation from~\eqref{eq:genial4}, we get
\begin{multline*}
\int_B \left( p + \nabla w_p^{A^\star, A^\infty_3} \right)^T A^\star p
=
|B| \, p^T A_3^\infty p - \int_B \left( p + \nabla w_p^{A^\star, A^\infty_3} \right)^T A^\star \nabla w_p^{A^\star, A^\infty_3} \\ - \int_{\R^d \setminus B} \left( \nabla w_p^{A^\star, A^\infty_3} \right)^T A^\infty_3 \nabla w_p^{A^\star, A^\infty_3} + 2 \int_\Gamma \left( A^\infty_3 p \cdot n \right) \, w_p^{A^\star, A^\infty_3},
\end{multline*}
which we recast as
\begin{eqnarray}
0 
&=& \int_B \left( p + \nabla w_p^{A^\star, A^\infty_3} \right)^T A^\star \left( p + \nabla w_p^{A^\star, A^\infty_3} \right) - 2 \int_\Gamma \left( A^\infty_3 p \cdot n \right) \, w_p^{A^\star, A^\infty_3}
\nonumber
\\
& & \qquad + \int_{\R^d \setminus B} \left( \nabla w_p^{A^\star, A^\infty_3} \right)^T A^\infty_3 \nabla w_p^{A^\star, A^\infty_3} - |B| \, p^T A^\infty_3 p 
\nonumber
\\
& \geq & 
\int_B \left( p + \nabla w_p^{A^\star, A^\infty_3} \right)^T A^\infty_3 \left( p + \nabla w_p^{A^\star, A^\infty_3} \right) - 2 \int_\Gamma \left( A^\infty_3 p \cdot n \right) \, w_p^{A^\star, A^\infty_3}
\nonumber
\\
& & \qquad + \int_{\R^d \setminus B} \left( \nabla w_p^{A^\star, A^\infty_3} \right)^T A^\infty_3 \nabla w_p^{A^\star, A^\infty_3} - |B| \, p^T A^\infty_3 p,
\label{eq:genial6}
\end{eqnarray}
where we have eventually used~\eqref{eq:genial3}. We now define, for any $v\in V_0$, 
$$
\cI(v) := \frac{1}{2}\int_B (p + \nabla v)^T A^\infty_3 (p + \nabla v) - \int_\Gamma (A^\infty_3 p \cdot n) \, v + \frac{1}{2} \int_{\R^d \setminus B} (\nabla v)^T A^\infty_3 \nabla v - \frac{1}{2} |B| \, p^T A^\infty_3 p.
$$
The unique solution $v_0 \in V_0$ to the minimization problem
$$
v_0 = \mathop{\mbox{argmin}}_{v\in V_0} \cI(v)
$$
satisfies
$$
-\mbox{\div} \Big( A^\infty_3 (p + \nabla v_0) \Big) = 0 \mbox{ in $\cD'(R^d)$},
$$  
and therefore is simply $v_0 = 0$. Thus,
\begin{equation}
\label{eq:genial5}
\forall v \in V_0, \quad \cI(v) \geq \cI(v_0) = 0.
\end{equation}
We recast~\eqref{eq:genial6} as
$$
0 \geq 2 \, \cI(\widetilde{w}_p^{A^\star, A^\infty_3}).
$$ 
Together with~\eqref{eq:genial5}, the above inequality implies that $w_p^{A^\star, A^\infty_3}$ is the unique minimizer of $\cI$ on $V_0$, hence $w_p^{A^\star, A^\infty_3} = 0$. This results holds for all $p\in\R^d$. In view of~\eqref{eq:lim2} and Lemma~\ref{lem:tech}, we thus obtain that $A^\infty_3 = A^\star$. This concludes the proof of Proposition~\ref{prop:prop2}.

\subsection{Proof of Proposition~\ref{prop:prop3}}
\label{sec:proofprop3}

\noindent
\textbf{Step 1: Proof of~\eqref{eq:spher}.} For all $R>0$, $\cA^R \in L^\infty(B; \cM)$, hence, for any $v \in V_0$, we have
$$
J_p^{\alpha I_d,A}(v) \leq J_p^{\cA^R,A}(v) \leq J_p^{\beta I_d,A}(v)
$$
and therefore
$$
p^T G^{\alpha I_d}(A) p \leq p^T G^{\cA}(A) p \leq p^T G^{\beta I_d}(A) p.
$$
Thus, for any $A \in \cM$, we have
\begin{equation}
\label{eq:h1}
\mbox{Tr}\left( G^{\alpha I_d}(A)\right) \leq \mbox{Tr}\left( G^{\cA^R}(A)\right) \leq \mbox{Tr}\left( G^{\beta I_d}(A)\right).
\end{equation}
For any $\gamma \in [\alpha,\beta]$, we introduce $\dps f_{\cA^R}(\gamma) = \frac{1}{d}\mbox{Tr}\left( G^{\cA^R}(\gamma I_d)\right) - \gamma$. Satisfying~\eqref{eq:spher} amounts to finding $a^R_3 \in [\alpha,\beta]$ such that $f_{\cA^R}(a^R_3)=0$. Introducing $\dps f_\alpha(\gamma) = \frac{1}{d}\mbox{Tr}\left( G^{\alpha I_d}(\gamma I_d)\right) - \gamma$ and likewise for $f_\beta(\gamma)$, we deduce from~\eqref{eq:h1} that
\begin{equation}
\label{eq:h2}
\forall \gamma \in [\alpha,\beta], \quad f_\alpha(\gamma) \leq f_{\cA^R}(\gamma) \leq f_\beta(\gamma).
\end{equation}
To proceed, we note that we have an explicit expression of $f_\alpha(\gamma)$, using the explicit solution to Eshelby's problem~\cite{Eshelby}. Indeed, for any $1\leq i \leq d$ and any $\alpha,\gamma>0$, the solution $w_{e_i}^{\alpha I_d,\gamma I_d}$ to~\eqref{eq:pbbase} with $A = \gamma I_d$ et $\cA(x) = \alpha I_d$ on $\RR^d$ is given by
$$
w_{e_i}^{\alpha I_d,\gamma I_d}(x) = \left\{
\begin{array}{l}
C(\alpha, \gamma) \, x_i \mbox{ if $|x|\leq 1$},
\\ \noalign{\vskip 5pt}
\dps C(\alpha, \gamma) \, \frac{x_i}{|x|^d} \mbox{ if $|x|\geq 1$},
\end{array} \right.
\ \text{with} \ C(\alpha, \gamma) = \frac{\gamma - \alpha}{(d-1)\gamma + \alpha}.
$$
With~\eqref{eq:defGA}, \eqref{eq:expJ2} and the above expression, we easily obtain that
$$
\mbox{Tr} \left( G^{\alpha I_d}(\gamma I_d) \right)
=
\sum_{i=1}^d \cJ_{e_i}^{\alpha I_d}(\gamma I_d) 
= 
d \, \left( \alpha + (\alpha-\gamma) C(\alpha, \gamma) \right),
$$
hence
$$
\frac{1}{d}\mbox{Tr}\left( G^{\alpha I_d}(\gamma I_d) \right) 
= 
\alpha + (\alpha-\gamma) C(\alpha, \gamma) ,
$$
and thus
$$
f_\alpha(\gamma) 
= 
(\alpha-\gamma) \Big( C(\alpha, \gamma) + 1 \Big)
=
(\alpha-\gamma) \frac{d\gamma}{(d-1)\gamma+\alpha}.
$$
We see that, when $\gamma \in [\alpha,\beta]$, we have $f_\alpha(\gamma) \leq 0$ and the equation $f_\alpha(\gamma) = 0$ has a unique solution, $\gamma=\alpha$. Likewise, when $\gamma \in [\alpha,\beta]$, we have $f_\beta(\gamma) \geq 0$ and the equation $f_\beta(\gamma) = 0$ has a unique solution, $\gamma=\beta$. The bound~\eqref{eq:h2} implies that there exists $a^R_3 \in [\alpha,\beta]$ such that $f_{\cA^R}(a^R_3)=0$. This proves~\eqref{eq:spher}.

Besides, in the case when $d \leq 3$, Lemma~\ref{lem:lemconcavity} implies that, for any $R>0$, $f_{\cA^R}$ is strictly concave. This yields the uniqueness of $a^R_3$ when $d \leq 3$.

\bigskip

\noindent
\textbf{Step 2: Proof of~\eqref{eq:spher2}.} We follow the same arguments as in the beginning of the proof of Proposition~\ref{prop:prop2}. Since $a^R_3 \in [\alpha,\beta]$, we know that, up to the extraction of a subsequence (that we still denote by $\left( a^R_3\right)_{R>0}$ to simplify the notation), there exists $a^\infty_3 \in [\alpha, \beta]$ such that $\dps a^R_3 \mathop{\longrightarrow}_{R\to +\infty} a^\infty_3$. 

Passing to the limit $R \to \infty$ in~\eqref{eq:spher}, we get that
\begin{equation}
\label{eq:genial2_bis}
d \, a^\infty_3 
= 
\sum_{i=1}^d \frac{a^\star}{|B|} \int_B e_i^T \left( e_i + \nabla w_{e_i}^{a^\star I_d, a^\infty_3 I_d} \right) - \frac{a^\infty_3}{|B|} \int_\Gamma (e_i \cdot n) \, w_{e_i}^{a^\star I_d, a^\infty_3 I_d},
\end{equation}
where, for any $p \in \RR^d$, $w_p^{a^\star I_d, a^\infty_3 I_d}$ is the unique solution in $V_0$ to
\begin{equation}
\label{eq:lim2_bis}
-\mbox{\div}\Big( \left(a^\star \chi_B + a^\infty_3( 1- \chi_B) \right) \left(p + \nabla w_p^{a^\star I_d, a^\infty_3 I_d} \right)\Big) = 0 \mbox{ in $\cD'(\R^d)$}.
\end{equation}
Using the relation~\eqref{eq:rel1} for problem~\eqref{eq:lim2_bis}, we have that
\begin{multline*}
a^\star \int_B \left| \nabla w_p^{a^\star I_d, a^\infty_3 I_d} \right|^2 + a^\infty_3 \int_{\R^d \setminus B} \left| \nabla w_p^{a^\star I_d, a^\infty_3 I_d} \right|^2 
\\
= - a^\star \int_B p^T \nabla w_p^{a^\star I_d, a^\infty_3 I_d} + a^\infty_3 \int_\Gamma (p \cdot n) \, w_p^{a^\star I_d, a^\infty_3 I_d}.
\end{multline*}
We thus deduce from~\eqref{eq:genial2_bis} that
$$
d \, a^\infty_3 = d \, a^\star - \frac{1}{|B|} \sum_{i=1}^d \left( a^\star \int_B \left| \nabla w_{e_i}^{a^\star I_d, a^\infty_3 I_d} \right|^2 + a^\infty_3 \int_{\R^d \setminus B} \left| \nabla w_{e_i}^{a^\star I_d, a^\infty_3 I_d} \right|^2 \right).
$$
This implies that 
$$
a^\infty_3 \leq a^\star.
$$
The sequel of the proof follows the same lines as the proof of Proposition~\ref{prop:prop2}.

\section{Two special cases}
\label{sec:justif}

In this section, we consider two special cases: the one-dimensional case (in Section~\ref{sec:1D}) and the case of a homogeneous material (in Section~\ref{sec:homogene}). In the first case, we show that our three definitions yield the same approximation of $A^\star$ as the standard method based on~\eqref{eq:correctorrandom-N}. In the second case, we show that our three definitions yield the value of the homogeneous material.

\subsection{The one-dimensional case}
\label{sec:1D}

If $d=1$, then the solution to~\eqref{eq:pbbase} can be analytically computed. It satisfies
$$
\frac{d w^{\cA,A}}{dx} = 0 \quad \text{on $\R \setminus B$}, \qquad \frac{d w^{\cA,A}}{dx} = \frac{A}{\cA(x)}-1 \quad \text{on $B$}. 
$$
We then get from~\eqref{eq:expJ3} that
\begin{eqnarray}
\nonumber
|B| \, \cJ^{\cA^R}(A) 
&=&
\int_B \cA^R 
- 
\int_B \cA^R \left( \frac{d w^{\cA^R,A}}{dx} \right)^2 
- 
\int_{\R \setminus B} A \left( \frac{d w^{\cA^R,A}}{dx} \right)^2
\\
\nonumber
&=& 
\int_B \cA^R 
-
\left[ \frac{A^2 \, |B|}{\cA^\star_R} - 2 A \, |B| + \int_B \cA^R \right]
\\
\nonumber
&=& 
|B| \left( 2A - \frac{A^2}{A^\star_R} \right),
\label{eq:titi}
\end{eqnarray}
where we have introduced $\dps (A^\star_R)^{-1} := \frac{1}{|B|} \int_B (\cA^R)^{-1}$, namely the harmonic mean of $\cA^R$ on $B$. The definitions~\eqref{eq:optimisation}, \eqref{eq:def2} and~\eqref{eq:def3} all yield
$$
A^R_1 = A^R_2 = A^R_3 = A^\star_R.
$$

\medskip

We point out that, in this one-dimensional case, the approximate coefficient $A^{\star}_R$ is identical to the effective coefficient $A^{\star,R}$ defined by~\eqref{eq:defper} (i.e. considering a {\em truncated} corrector problem supplied with periodic boundary conditions). Thus, in this context, we can see that our alternative definitions of effective coefficients are consistent with the standard one. 

\subsection{The case of a homogeneous material}
\label{sec:homogene}

We assume here that
\begin{equation}
\label{eq:hyp_homog}
\text{for all $R>0$, \ \ $\cA^R = \cA$ is constant and equal to some matrix $\overline{A}\in\cM$.}
\end{equation} 
We show below that $\overline{A}$ is the unique maximizer of $\dps A \mapsto \sum_{i=1}^d \cJ_{e_i}^{\cA}(A)$, and hence that Definition~\eqref{eq:optimisation} yields $A^R_1 = \overline{A}$ for all $R>0$. We next show that Definition~\eqref{eq:def2} yields $A^R_2 = \overline{A}$. We eventually show that $A^R_3 = \overline{A}$ satisfies~\eqref{eq:def3}, and that $G^{\cA}$ has a unique fixed point. 

\subsubsection{Definition~\eqref{eq:optimisation}}

From~\eqref{eq:expJ3} and our assumption~\eqref{eq:hyp_homog}, we see that, for any $A \in \cM$,
\begin{equation}
\label{eq:titi2}
\cJ_p^{\cA}(A) \leq \frac{1}{|B|} \int_B p^T \cA p = p^T \, \overline{A} \, p,
\end{equation}
hence
$$
\sum_{i=1}^d \cJ_{e_i}^{\cA}(A) \leq \sum_{i=1}^d e_i^T \, \overline{A} \, e_i.
$$
If $A=\overline{A}$, we see that the diffusion matrix in~\eqref{eq:pbbase} is constant, therefore $w_p^{\cA,\overline{A}} = 0$. We then deduce from~\eqref{eq:optim2} that $\dps \cJ_p^{\cA}\left( \overline{A} \right) = p^T \, \overline{A} \, p$, which directly implies that $A = \overline{A}$ is a maximizer of $\dps \cM \ni A \mapsto \sum_{i=1}^d \cJ_{e_i}^{\cA}(A)$.

Conversely, assume that $\widehat{A}$ is a maximizer of $\dps \cM \ni A \mapsto \sum_{i=1}^d \cJ_{e_i}^{\cA}(A)$. Then, for any $1\leq i\leq d$, $\dps \cJ_{e_i}^{\cA}\left( \widehat{A} \right) = e_i^T \, \overline{A} \, e_i$. We thus infer from~\eqref{eq:expJ3} that $\nabla w_{e_i}^{\cA,\widehat{A}} = 0$. Using~\eqref{eq:pbbase}, we deduce that 
$$
\mbox{\div}\Big( \left( \overline{A} \chi_B + \widehat{A} \chi_{\RR^d \setminus B_R} \right) e_i \Big) = \mbox{\div}\Big( \ccA^{\cA, \widehat{A}} e_i \Big) = 0.
$$
Using Lemma~\ref{lem:tech}, we obtain that $\widehat{A} = \overline{A}$. 

\medskip

We hence have shown that $\overline{A}$ is the unique maximizer of $\dps \cM \ni A \mapsto \sum_{i=1}^d \cJ_{e_i}^{\cA}(A)$. Our first definition therefore yields $A^R_1 = \overline{A}$ for all $R>0$. 

\subsubsection{Definition~\eqref{eq:def2}}

We deduce from~\eqref{eq:def2}, the fact that $A^R_1 = \overline{A}$ and the above expression of $\cJ_p^{\cA}\left( \overline{A} \right)$ that, for any $p \in \R^d$,
$$
p^T A_2^R p = \cJ_p^{\cA}(A^R_1) = \cJ_p^{\cA}\left( \overline{A} \right) = p^T \, \overline{A} \, p.
$$
Since $A^R_2$ and $\overline{A}$ are symmetric, this implies that $A^R_2 = \overline{A}$.

\subsubsection{Definition~\eqref{eq:def3}}

We deduce from the above expression of $\cJ_p^{\cA}\left( \overline{A} \right)$ that $G^{\cA}\left( \overline{A} \right) = \overline{A}$, hence $\overline{A}$ is a fixed point of $G^{\cA}$. The remainder of this section is devoted to showing that $\overline{A}$ is the {\em unique} fixed point of $G^{\cA}$. We recast~\eqref{eq:titi2} as
$$
\forall p \in \R^d, \quad p^T G^{\cA}(A) p \leq p^T \, \overline{A} \, p.
$$
If $A$ is a fixed point of $G^{\cA}$, then we have that 
\begin{equation}
\label{eq:genial7}
A \leq \overline{A}.
\end{equation}
We now follow the same steps as in the proof of Proposition~\ref{prop:prop2}. Using~\eqref{eq:defGA} and~\eqref{eq:expJ2}, we see that 
\begin{eqnarray*}
  p^T A p
  =
p^T G^{\cA}(A) p 
&=& 
\cJ^{\cA}_p(A)
\\
&=& 
\frac{1}{|B|} \int_B p^T \cA (p + \nabla w_p^{\cA, A}) - \frac{1}{|B|} \int_{\Gamma} (A p\cdot n) \, w_p^{\cA, A}.
\end{eqnarray*}
Using~\eqref{eq:rel1}, we deduce that
$$
p^T A p = p^T \, \overline{A} p - \frac{1}{|B|} \int_B (\nabla w_p^{\cA, A})^T \cA \nabla w_p^{\cA, A} - \frac{1}{|B|} \int_{\R^d \setminus B} (\nabla w_p^{\cA, A})^T A \nabla w_p^{\cA, A}.
$$
We infer from the above relation and~\eqref{eq:rel1} that
\begin{eqnarray}
\int_B (p + \nabla w_p^{\cA, A})^T \cA p 
&=& 
|B| \, p^T \, \overline{A} p + \int_B p^T \cA \nabla w_p^{\cA, A}
\nonumber
\\
&=&
|B| \, p^T A p + \int_B (p + \nabla w_p^{\cA, A})^T \cA \nabla w_p^{\cA, A} + \int_{\R^d \setminus B} (\nabla w_p^{\cA, A})^T A \nabla w_p^{\cA, A}
\nonumber
\\
&=& 
|B| \, p^T A p  + \int_{\Gamma} (A p \cdot n) \, w_p^{\cA, A}.
\label{eq:genial4_bis}
\end{eqnarray}
The equality~\eqref{eq:rel1} can also be written as
$$
0 
=
\int_B (p + \nabla w_p^{\cA, A})^T \cA \nabla w_p^{\cA, A} - \int_{\Gamma} (A p\cdot n) \, w_p^{\cA, A} + \int_{\R^d \setminus B} (\nabla w_p^{\cA, A})^T A \nabla w_p^{\cA, A}.
$$
Subtracting~\eqref{eq:genial4_bis} from the above relation and next using~\eqref{eq:genial7}, we get
\begin{eqnarray}
0 
&=& \int_B (p + \nabla w_p^{\cA, A})^T \cA (p + \nabla w_p^{\cA, A}) - 2 \int_{\Gamma} (A p\cdot n) \, w_p^{\cA, A}
\nonumber
\\
& & \qquad + \int_{\R^d \setminus B} (\nabla w_p^{\cA, A})^T A \nabla w_p^{\cA, A} - |B| \, p^T A p 
\nonumber
\\
& \geq & 
\int_B (p + \nabla w_p^{\cA, A})^T A (p + \nabla w_p^{\cA, A}) - 2 \int_{\Gamma} (A p\cdot n) \, w_p^{\cA, A}
\nonumber
\\
& & \qquad + \int_{\R^d \setminus B} (\nabla w_p^{\cA, A})^T A \nabla w_p^{\cA, A} - |B| \, p^T A p.
\label{eq:genial6_bis}
\end{eqnarray}
We now define, for all $v\in V_0$, the energy
$$
\cI(v) := \frac{1}{2} \int_B (p + \nabla v)^T A (p + \nabla v) - \int_{\Gamma} (A p\cdot n) \, v + \frac{1}{2} \int_{\R^d \setminus B} (\nabla v)^T A \nabla v - \frac{1}{2} |B| \, p^T A p.
$$
The unique solution $v_0 \in V_0$ to the minimization problem
$$
v_0 = \mathop{\mbox{argmin}}_{v\in V_0} \cI(v)
$$
satisfies
$$
-\mbox{\div} \Big( A (p + \nabla v_0) \Big) = 0 \mbox{ in $\cD'(R^d)$},
$$  
and therefore is simply $v_0 = 0$. Thus,
\begin{equation}
\label{eq:genial5_bis}
\forall v \in V_0, \quad \cI(v) \geq \cI(v_0) = 0.
\end{equation}
We recast~\eqref{eq:genial6_bis} as
$$
0 \geq 2 \, \cI(w_p^{\cA, A}) \quad \text{with $w_p^{\cA, A} \in V_0$}.
$$ 
Collecting the above relation with~\eqref{eq:genial5_bis}, we deduce that $w_p^{\cA, A}$ is the unique minimizer of $\cI$ on $V_0$, hence that $w_p^{\cA, A} = 0$. This results holds for all $p\in\R^d$. In view of Lemma~\ref{lem:tech} and our assumption~\eqref{eq:hyp_homog}, we thus obtain that $A = \overline{A}$. This is the claimed uniqueness result of the fixed point of $G^{\cA}$, under assumption~\eqref{eq:hyp_homog}.

\section*{Acknowledgements}

The work of FL is partially supported by ONR under Grant N00014-15-1-2777 and EOARD under grant FA9550-17-1-0294. SX gratefully acknowledges the support from Labex MMCD (Multi-Scale Modelling \& Experimentation of Materials for Sustainable Construction) under contract ANR-11-LABX-0022. 
The authors acknowledge the funding from the German Academic Exchange Service (DAAD) from funds of the ``Bundesministeriums f\"ur Bildung und Forschung'' (BMBF) for the project Aa-Par-T (Project-ID 57317909) as well as the funding from the PICS-CNRS and the PHC PROCOPE 2017 (Project N$^0$ 37855ZK). The authors would also like to thank S\'ebastien Brisard for useful discussions on a preliminary version of this work.

\bibliography{biblio}

\end{document}